\newtheoremstyle{plain}
  {6pt}   
  {6pt}   
  {\itshape}  
  {0pt}       
  {\bfseries} 
  {.}         
  {5pt plus 1pt minus 1pt} 
  {}          
\newtheoremstyle{definition}
  {6pt}   
  {6pt}   
  {\normalfont}  
  {0pt}       
  {\bfseries} 
  {.}         
  {5pt plus 1pt minus 1pt} 
  {}          
\theoremstyle{plain}
\newtheorem{thm}{Theorem}[section]
\newtheorem{prop}[thm]{Proposition}
\newtheorem{cor}[thm]{Corollary}
\newtheorem{lem}[thm]{Lemma}
\theoremstyle{definition}
\newtheorem{defn}[thm]{Definition}
\newtheorem{rmk}[thm]{Remark}
\numberwithin{equation}{thm}
\newcommand{\emphbf}[1]{\emph{\textbf{#1}}}
\DeclareMathAlphabet{\mathpzc}{OT1}{pzc}{m}{it}
\DeclareMathOperator{\Hom}{Hom}
\DeclareMathOperator{\Ext}{Ext}
\DeclareMathOperator{\Mod}{Mod}
\DeclareMathOperator{\modu}{mod}
\DeclareMathOperator{\End}{End}
\DeclareMathOperator{\image}{Im}
\DeclareMathOperator{\add}{add}
\DeclareMathOperator{\Mat}{Mat}
\DeclareMathOperator{\Ind}{Ind}
\DeclareMathOperator{\CoInd}{CoInd}
\DeclareMathOperator{\twmod}{twmod}
\DeclareMathOperator{\conv}{conv}
\begin{document}

\title[QH algebras, exact Borel subalgebras, $A_\infty$-categories and boxes]{Quasi-hereditary algebras, exact Borel subalgebras, $A_\infty$-categories and boxes}
\author{Steffen Koenig, Julian K\"ulshammer and Sergiy Ovsienko}
\date{\today}

\address{Steffen Koenig, Julian K\"ulshammer\\
Institute of Algebra and Number Theory,
University of Stuttgart \\ Pfaffenwaldring 57 \\ 70569 Stuttgart,
Germany} \email{skoenig@mathematik.uni-stuttgart.de,
kuelsha@mathematik.uni-stuttgart.de}

\address{Sergiy Ovsienko\\
Mechanics and Mathematics Faculty,
Kiev National University \\
01033 Kiev, Ukraine}  
\email{ovsiyenko.sergiy@gmail.com}

\maketitle

\begin{abstract}
Highest weight categories arising in Lie theory are known to be associated with finite dimensional quasi-hereditary algebras such as Schur algebras or blocks of category $\mathcal O$. An analogue of the PBW theorem will be shown to hold for quasi-hereditary algebras: Up to Morita equivalence each such algebra has an exact Borel subalgebra. The category $\mathcal{F}(\Delta)$ of modules with standard (Verma, Weyl, \dots) filtration, which is exact, but rarely abelian, will be shown to be equivalent to the category of representations of a directed box. This box is constructed as a quotient of a dg algebra associated with the $A_{\infty}$-structure on $\Ext^*(\Delta,\Delta)$. Its underlying algebra is an exact Borel subalgebra.
\bigskip

{\bf Keywords.} Highest weight category, quasi-hereditary algebra, exact Borel subalgebra, modules with standard filtrations, $A_{\infty}$-category, differential
graded category, box.
\medskip

{\bf MR Subject Classification.}  
16G10, 16E45, 17B10, 17B35.
\end{abstract}

\tableofcontents

\section{Introduction}

Highest weight categories are abundant in algebraic Lie theory as well as in representation theory of finite dimensional algebras. A highest weight category with a finite  number of simple objects is precisely the module category of a quasi-hereditary algebra that is unique up to Morita equivalence. A general highest weight category is made up of pieces (to be reached by a well-defined truncation process) that are finite highest weight categories. Among the most frequently studied quasi-hereditary algebras are Schur algebras of reductive algebraic groups, blocks of the Bernstein-Gelfand-Gelfand category $\mathcal O$ of semisimple complex Lie algebras - or generalisations to Kac-Moody algebras - as well as path algebras of quivers, finite dimensional algebras of global dimension two and Auslander algebras.

Highest weight categories come with important objects, the standard objects, which are 'intermediate' between simple and projective
objects. Examples of standard objects include Verma and Weyl modules. The category $\mathcal{F}(\Delta)$ of objects with standard filtration is crucial in a wide variety of contexts, including Ringel's theory of (characteristic) tilting modules and Ringel duality, Kazhdan-Lusztig theory, relative Schur
equivalences and homological  questions in representation theory. In geometry, examples of standard objects come up in exceptional sequences in algebraic or symplectic geometry, where $\mathcal{F}(\Delta)$ is studied as category of 'twisted stalks'. By the Dlab-Ringel standardisation theorem (see Theorem \ref{dlabringelstandardization}) the study of the subcategory $\mathcal{F}(\Delta)$ for an exceptional sequence in any category amounts to the study of the category of filtered modules for an (up to Morita equivalence unique) quasi-hereditary algebra. For this reason we will formulate our results mostly just for quasi-hereditary algebras as this does not lose generality.

The principal goal of this article is to clarify the structure of categories $\mathcal{F}(\Delta)$ of quasi-hereditary algebras in full generality. Using $A_\infty$-techniques and differential graded categories we will establish an equivalence between $\mathcal{F}(\Delta)$ and a category of representations of a box, that is, of a representation theoretic analogue of a differential graded category, satisfying strong additional properties. This makes available for the study of $\mathcal{F}(\Delta)$ the structure theory and representation theory of boxes, which is fundamental to representation theory of algebras by Drozd's tame and wild dichotomy.

Inherent to highest weight categories and quasi-hereditary algebras is a 'directedness', which is reflected both in ordering conditions in the definition and in typical proofs in this area proceeding inductively along certain partial orders. Using the connection to boxes, this directedness can now be formulated precisely, as a characterisation of quasi-hereditary algebras and an equivalence of categories:

\begin{thm} \label{maintheorem}
A finite dimensional algebra $A$ is quasi-hereditary if and only if it is Morita equivalent to the right Burt-Butler algebra $R_{\mathfrak{B}}$ of a directed box $\mathfrak{B}$ if and only if it is Morita equivalent to the left Burt-Butler algebra $L_{\mathfrak B'}$ of a directed box
$\mathfrak{B'}$.

Moreover, the category $\mathcal{F}(\Delta)$ of $A$-modules with standard filtration is equivalent - as an exact category - to the category of representations of the box $\mathfrak B$.
\end{thm}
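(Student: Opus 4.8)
The plan is to route all three characterisations and the exactness statement through the $A_\infty$-algebra $E:=\Ext^*_A(\Delta,\Delta)$, where $\Delta:=\bigoplus_i\Delta(i)$. Two inputs are essential. First, a quasi-hereditary algebra has finite global dimension (via a heredity chain), so $E$ is finite dimensional and carries a minimal $A_\infty$-structure, which I would transfer by homological perturbation from the dg algebra $\mathbf{R}\End_A(\Delta)$. Second, the heredity order makes $E$ \emph{directed} --- each $\End_A(\Delta(i))$ is local, and $\Hom_A(\Delta(i),\Delta(j))$ and $\Ext^{\ge1}_A(\Delta(i),\Delta(j))$ vanish unless $i<j$ --- and the transfer preserves this. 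Using Morita invariance of all notions involved and the Dlab--Ringel standardisation theorem (Theorem \ref{dlabringelstandardization}), I would reduce to $A$ basic; this reduction also turns the ``moreover'' part, for the two ``if'' directions, into a consequence of the construction below.

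For ``quasi-hereditary $\Rightarrow$ box'': from the directed $A_\infty$-algebra $E$ a bar--cobar construction produces a quasi-free, locally finite, directed dg algebra $\mathcal C\simeq\mathbf{R}\End_A(\Delta)$, and the box associated with $\mathcal C$ (the ``quotient'' referred to in the abstract) is the sought $\mathfrak B$, with underlying algebra $B$. I would complete this direction with four checks. (i) $\rep(\mathfrak B)$ is, as an exact category, the category of dg $\mathcal C$-modules that are finitely generated and free over the underlying graded algebra, with conflations the degreewise short exact sequences. (ii) Since the $\Delta(i)$ generate $D^b(A)$ (by induction along the poset), a Keller-type dg Morita equivalence gives $D^b(A)\simeq D(\mathcal C)$ carrying $\Delta(i)$ to the $i$-th indecomposable cofibrant $\mathcal C$-module; restricting, it identifies the category of (i) with $\mathcal F(\Delta)$ \emph{as an exact category}, since on both sides the conflations are precisely the short exact sequences of the ambient (abelian, resp.\ dg-module) category all of whose terms lie in the subcategory. (iii) $A\otimes_B-$ is exact and sends $L_B(i)$ to $\Delta(i)$, so $B$ is an exact Borel subalgebra of $A$ (a bonus not needed for the statement). (iv) Combining (ii) with the analysis of the converse below, $\rep(\mathfrak B)$ is the $\mathcal F(\Delta)$-category both of $A$ and of $R_{\mathfrak B}$ with standard objects matching, so by the uniqueness in Theorem \ref{dlabringelstandardization} the algebras $A$ and $R_{\mathfrak B}$ are Morita equivalent.

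For the converse, ``$A$ Morita equivalent to $R_{\mathfrak B}$ for a directed box $\Rightarrow$ $A$ quasi-hereditary'', I would invoke the directed version of Burt--Butler theory developed earlier: it equips $R_{\mathfrak B}$ with a heredity chain, identifies its standard modules with the images of the simple representations, and gives an exact equivalence $\rep(\mathfrak B)\simeq\mathcal F(\Delta_{R_{\mathfrak B}})$; quasi-heredity and the equivalence then pass along any Morita equivalence. The ``left'' statement follows by running the construction on the opposite algebra $A^{\mathrm{op}}$, which is again quasi-hereditary (with standard modules dual to the costandard modules of $A$), and invoking the duality that exchanges the left and right Burt--Butler algebras when one passes to the opposite box.

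The main obstacle is converting the homological, $A_\infty$-flavoured data into an honest \emph{box} compatibly with the \emph{exact} --- not merely additive or triangulated --- structure on $\mathcal F(\Delta)$. This requires a faithful dictionary between quasi-free directed dg algebras and directed boxes, in particular the projectivity of the kernel $\bar W$ of the box as a left and as a right $B$-module, which is where directedness and finite global dimension are genuinely used; and it requires matching conflations, not just objects and morphisms or their derived shadows. Arranging the gradings and the order so that ``directed box'' is exactly the image of ``quasi-hereditary algebra'', in both directions, is the technical heart of the argument.
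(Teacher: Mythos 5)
Your proposal follows essentially the same route as the paper: pass to the $A_\infty$-structure on $\Ext^*_A(\Delta,\Delta)$, dualise/cobar it into a directed differential graded tensor algebra whose associated box $\mathfrak B$ has projective kernel, identify $\mathcal F(\Delta)$ with $\mathfrak B$-representations (the paper does this via Keller--Lef\`evre-Hasegawa twisted modules rather than a full derived equivalence, and spends a section matching the exact structures, which you rightly flag as the technical heart), and then combine Burt--Butler theory with the Dlab--Ringel standardisation theorem for both directions and for the left/right (opposite box) statement. One correction to your bonus step (iii): $B$ is an exact Borel subalgebra of $R_{\mathfrak B}$ (embedded via $b\mapsto b\varepsilon$), not of $A$ itself --- even a basic quasi-hereditary algebra may admit no exact Borel subalgebra (Appendix \ref{A3}) --- but, as you note, this is not needed for the theorem.
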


The if part of the theorem is a rather direct consequence of the theory set up by Burt and Butler, combined with Dlab and Ringel's 'standardisation'
technique. The converse is more involved: Starting with a quasi-hereditary algebra we consider its Yoneda algebra $\Ext^*(\Delta,\Delta)$. This carries an
$A_{\infty}$-structure, which we translate into a differential graded structure, using the concept of twisted stalks. The resulting differential graded category has additional properties telling us that this datum actually determines a box. The directedness of the underlying algebra directly comes from the directedness of homology of standard modules. The results by Burt and Butler precisely apply to the box obtained after these translations. The category of representations of the box turns out to be precisely the category $\mathcal{F}(\Delta)$.

Turning around this result it follows that every directed box $\mathfrak{B}$ produces two quasi-hereditary algebras, which in general are different. The relation between them recovers a central symmetry within the class of quasi-hereditary algebras, Ringel duality:

\begin{cor}
Let $\mathfrak B$ be a directed box and $L_{\mathfrak B}$ and $R_{\mathfrak B}$ its left and right Burt-Butler algebras. Then $L_{\mathfrak B}$ and $R_{\mathfrak B}$ are mutually Ringel dual quasi-hereditary algebras.
\end{cor}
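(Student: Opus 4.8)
The plan is to realise $L_{\mathfrak B}$ as the Ringel dual $R(R_{\mathfrak B})$ by comparing relative projective and relative injective objects across the exact equivalence supplied by Theorem \ref{maintheorem}, and then to invoke the involutivity of Ringel duality. Recall first, from Ringel's theory of the categories $\mathcal F(\Delta)$, that for a quasi-hereditary algebra $A$ the exact category $\mathcal F(\Delta_A)$ (with the exact structure inherited from $\modu A$) has enough relative projectives and enough relative injectives: the relative projective objects are exactly $\add(A)$, while the relative injective objects are exactly $\mathcal F(\Delta_A)\cap\mathcal F(\nabla_A)=\add(T_A)$, where $T_A$ denotes the characteristic tilting module. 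The endomorphism algebra of the relative projective generator $A$ is again $A$ (as $\End_A(A)^{\mathrm{op}}=A$), and $\End_A(T_A)^{\mathrm{op}}=R(A)$ is, by definition, the Ringel dual.

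On the box side one needs the analogous statement: for a directed box $\mathfrak B$ the exact category $\rep(\mathfrak B)$ has a projective generator $P_{\mathfrak B}$ and an injective cogenerator $I_{\mathfrak B}$, with $\End_{\rep\mathfrak B}(P_{\mathfrak B})^{\mathrm{op}}\cong R_{\mathfrak B}$ and $\End_{\rep\mathfrak B}(I_{\mathfrak B})^{\mathrm{op}}\cong L_{\mathfrak B}$ — this is built into Burt and Butler's construction of the two algebras, once the conventions for opposite rings are matched. Now the exact equivalence $\Phi\colon\mathcal F(\Delta_{R_{\mathfrak B}})\xrightarrow{\ \sim\ }\rep(\mathfrak B)$ of Theorem \ref{maintheorem} preserves and reflects conflations, hence preserves relative projectives and relative injectives. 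Therefore $\Phi$ restricts to an equivalence $\add(T_{R_{\mathfrak B}})\simeq\add(I_{\mathfrak B})$, which yields
\[
R(R_{\mathfrak B})=\End_{R_{\mathfrak B}}(T_{R_{\mathfrak B}})^{\mathrm{op}}\cong\End_{\rep\mathfrak B}(I_{\mathfrak B})^{\mathrm{op}}\cong L_{\mathfrak B}
\]
as algebras (an isomorphism for the basic representatives, a Morita equivalence in general).

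To finish, this algebra isomorphism has to be promoted to an identification of highest weight structures: one must check that the quasi-hereditary order on $L_{\mathfrak B}$ coming from the directedness of $\mathfrak B$ is the order opposite to that on $R_{\mathfrak B}$, so that $L_{\mathfrak B}$ is $R(R_{\mathfrak B})$ as a highest weight category and not merely as an algebra. This is exactly where the directedness hypothesis is used: it is the common source of the quasi-hereditary orders on $R_{\mathfrak B}$ and $L_{\mathfrak B}$, and passing to the Ringel dual reverses it. Once this is in place, the involutivity of Ringel duality gives $R(L_{\mathfrak B})\cong R(R(R_{\mathfrak B}))\cong R_{\mathfrak B}$, so $L_{\mathfrak B}$ and $R_{\mathfrak B}$ are indeed mutually Ringel dual.

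I expect the genuine difficulty to lie in the middle step: verifying that Burt and Butler's left algebra really is (the opposite of) the endomorphism ring of the injective cogenerator of $\rep(\mathfrak B)$, rather than of some object only implicitly attached to the box datum, and keeping track of all the opposite-ring and order conventions; by contrast the homological ingredients — that the relative injectives of $\mathcal F(\Delta)$ are precisely $\add T$, and that an exact equivalence transports this subcategory — are completely standard.
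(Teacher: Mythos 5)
Your route is in outline the paper's own: identify the relative (Ext-)injective objects of $\mathcal{F}(\Delta_{R_{\mathfrak B}})\simeq\mathfrak{B}-\modu$ with $\add T$ on one side and with an object whose opposite endomorphism algebra is $L_{\mathfrak B}$ on the other, conclude that $L_{\mathfrak B}$ is Morita equivalent to the Ringel dual of $R_{\mathfrak B}$, and obtain ``mutually'' from involutivity of Ringel duality. The genuine gap is exactly the step you describe as ``built into Burt and Butler's construction'': it is not built in, and you leave it unproven. By definition $L_{\mathfrak B}=\End_{\mathfrak{B}^{op}}(B)\cong\Hom_B(W_B,B_B)$ is the endomorphism ring of $B$ viewed as a \emph{right} box representation; it is not visibly the opposite endomorphism ring of any relative injective cogenerator of $\mathfrak{B}-\modu$, and no object $I_{\mathfrak B}$ with that property is produced by the definitions alone. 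The paper closes precisely this gap with two substantive results of Burt and Butler: Theorem \ref{injectivetiltings}, which says that for a box with projective kernel whose module category is fully additive the Ext-injective objects of $\Ind(B,R)$ are exactly $\add(DW)$ (full additivity being available because $\mathfrak{B}-\modu\simeq\mathcal{F}(\Delta)$), and the double centraliser Theorem \ref{doublecentraliser}, $L\cong\End_{R^{op}}(W)$, which gives $\End_R(DW)^{op}\cong\End_{R^{op}}(W)\cong L$. Without these two inputs the last isomorphism in your chain $R(R_{\mathfrak B})=\End_{R_{\mathfrak B}}(T)^{op}\cong\End(I_{\mathfrak B})^{op}\cong L_{\mathfrak B}$ is unjustified; with them, and with $\add(DW)$ matched to $\add T$ via Ringel's description of the Ext-injectives of $\mathcal{F}(\Delta)$, your argument becomes the paper's proof verbatim.

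Two smaller remarks. Your closing paragraph about re-checking quasi-hereditary orders is unnecessary: the Ringel dual is by definition $\End_A(T)^{op}$ with its canonical quasi-hereditary structure, so once $L_{\mathfrak B}$ is identified (up to Morita equivalence) with $\End_{R_{\mathfrak B}}(T)^{op}$, the highest weight statement comes for free, and mutuality needs only the standard fact that $R(R(A))$ is Morita equivalent to $A$; the paper does not perform any order comparison at this point. Also note that quasi-heredity of $R_{\mathfrak B}$ (and the exact equivalence $\mathcal{F}(\Delta_{R_{\mathfrak B}})\simeq\mathfrak{B}-\modu$) is the ``if'' direction of Theorem \ref{maintheorem}, proved in the paper via the Ext-comparison Theorem \ref{extensions}, and the statement that both $\Ind(B,R)$ and $\CoInd(B,L)$ realise $\mathfrak{B}-\modu$ (Theorem \ref{inducedboxmodules}) is what the paper uses for the complementary identification $\mathcal{F}(\nabla_L)\simeq\mathcal{F}(\Delta_R)$; citing these explicitly would make your use of the main theorem precise.
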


The Main Theorem \ref{maintheorem} has a variety of applications obtained in this article, and potential for many more.

With respect to representation theory, it makes available for the study of $\mathcal{F}(\Delta)$ the rich supply of methods of representation theory of boxes. For instance we obtain as a consequence (Corollary \ref{existence-of-ASS}) a fundamental result by Ringel \cite{Rin91} asserting the existence of (relative) Auslander-Reiten sequences for boxes.

A second line of applications, and in fact the original motivation for this article, is about the relation of the algebra $B$ underlying the box $\mathfrak{B}$ and the category $\mathcal{F}(\Delta)$.

In this respect, the main application of Theorem \ref{maintheorem} is that it solves the following problem that has been open for about twenty years (after being incorrectly answered in \cite{Koe95} and \cite{Koe99}):

\begin{cor}
Every quasi-hereditary algebra is Morita equivalent to a quasi-hereditary algebra $A$ (with corresponding quasi-hereditary structure) that has an exact
Borel subalgebra $B$.
\end{cor}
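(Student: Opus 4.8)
The plan is to read off the exact Borel subalgebra directly from the box furnished by Theorem~\ref{maintheorem}. Given an arbitrary quasi-hereditary algebra $A_0$, that theorem produces a directed box $\mathfrak{B} = (B, W)$ whose right Burt-Butler algebra $A := R_{\mathfrak{B}}$ is Morita equivalent to $A_0$, with quasi-hereditary structure corresponding under this equivalence to the given one, and for which $\mathcal{F}(\Delta)$ is equivalent, as an exact category, to $\rep(\mathfrak{B})$. The candidate Borel subalgebra is the underlying algebra $B$ of $\mathfrak{B}$ together with the canonical algebra map $B \to R_{\mathfrak{B}}$ that is built into the construction of the Burt-Butler algebra; directedness of $\mathfrak{B}$ makes this map injective. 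It then remains to verify, for this inclusion, the three defining properties of an exact Borel subalgebra in the sense of K\"onig.

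First one checks that $B$ is a directed algebra, i.e. quasi-hereditary with all standard modules simple; this is precisely the hypothesis that $\mathfrak{B}$ is a directed box, read off at the level of the underlying algebra. Second, the induction functor $R_{\mathfrak{B}} \otimes_B -$ from $B$-modules to $A$-modules must be exact, equivalently $R_{\mathfrak{B}}$ must be projective as a right $B$-module; this is a structural feature of the Burt-Butler formalism, coming from $W$ being projective over $B$ on the relevant side. Third, and least formally, induction must send each simple $B$-module $L_B(i)$ to the standard module $\Delta_A(i)$ and induce a bijection between the simple modules of $B$ and those of $A$.

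For this last point the idea is to transport everything along the equivalence $\mathcal{F}(\Delta) \simeq \rep(\mathfrak{B})$ of Theorem~\ref{maintheorem}. Under it the standard module $\Delta_A(i)$ corresponds to $L_B(i)$ viewed as a representation of $\mathfrak{B}$ through the canonical fully faithful functor from $B$-modules to $\rep(\mathfrak{B})$; on the other hand the Burt-Butler formalism identifies this canonical functor, composed with the equivalence, with the induction functor $R_{\mathfrak{B}} \otimes_B -$ (at least on semisimple $B$-modules, which is all that is needed here). Comparing the two descriptions gives $R_{\mathfrak{B}} \otimes_B L_B(i) \cong \Delta_A(i)$, and since the $L_B(i)$ exhaust the simple $B$-modules while the $\Delta_A(i)$ are indexed by the simple $A$-modules, the bijection on simples follows. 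With the three properties established, $B \hookrightarrow R_{\mathfrak{B}}$ is an exact Borel subalgebra; as $R_{\mathfrak{B}}$ is Morita equivalent to $A_0$, this proves the corollary.

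I expect the third step to be the only genuine difficulty: one must match the abstract induction functor along $B \hookrightarrow R_{\mathfrak{B}}$ with the explicit combinatorics of representations of the box carefully enough to obtain the isomorphism $R_{\mathfrak{B}} \otimes_B L_B(i) \cong \Delta_A(i)$ on the nose, rather than merely up to a $\Delta$-filtration, and to confirm that the equivalence of Theorem~\ref{maintheorem} matches simple box representations with standard modules. The directedness and exactness conditions should by contrast be comparatively formal consequences of the definition and basic properties of a directed box.
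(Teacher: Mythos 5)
Your proposal is correct and takes essentially the same route as the paper's proof: the underlying algebra $B$ of the directed box is embedded via $b\mapsto b\varepsilon$, (B1) follows from directedness, (B2) from Proposition \ref{exact}, and the step you single out as the genuine difficulty, $R_{\mathfrak B}\otimes_B L_B(\mathtt{i})\cong \Delta(\mathtt{i})$, holds by construction, since the quasi-hereditary structure on $R_{\mathfrak B}$ is defined with $\Delta(\mathtt{i}):=R_{\mathfrak B}\otimes_B L_B(\mathtt{i})$ and these are matched with the standard modules of the original algebra through the equivalence $\mathfrak B-\modu\simeq\Ind(B,R_{\mathfrak B})\simeq\mathcal F(\Delta)$, exactly as you outline. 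Two cosmetic slips do not affect the argument: injectivity of $B\to R_{\mathfrak B}$ comes from surjectivity of the counit (part of the projective-kernel hypothesis contained in "directed box") rather than from directedness of the bigraph, and the canonical functor $\pi^*_{\mathfrak B}\colon B-\modu\to\mathfrak B-\modu$ is faithful but in general not full.
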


Here an exact Borel subalgebra, as defined in \cite{Koe95}, satisfies properties analogous to the universal enveloping algebra of a Borel subalgebra of a semisimple complex Lie algebra, including a version of the fundamental theorem of Poincar\'e, Birkhoff and Witt.

The exact Borel subalgebra whose existence is claimed here, is the underlying algebra of the box whose representations form the category
$\mathcal{F}(\Delta)$.

Note that there are quasi-hereditary algebras without exact Borel subalgebras, see \cite{Koe95}, which implies that Theorem \ref{maintheorem} (as well as this corollary) fails to be true for isomorphism classes instead of Morita equivalence classes.

Existence of exact Borel subalgebras has been shown for the blocks of the Bernstein-Gelfand-Gelfand category $\mathcal O$ of semisimple complex Lie
algebras \cite{Koe95}, for Frobenius kernels of semisimple algebraic groups \cite{PSW00}, for various abstractly defined classes of quasi-hereditary algebras, and more recently for certain infinite dimensional algebras \cite{MM09}. Despite intensive efforts, existence for Schur algebras or for algebras of global dimension two - which, up to Morita equivalence, is a special case of our results now - could not be shown so far; more optimistic claims about Schur algebras made in \cite[appendix]{Koe95} and in \cite{Koe93} rely on incomplete proofs. 
The general existence result we are going to prove here does not close the gaps in these proofs, which are based on rather different constructions. Our results do, however, correct and replace an approach taken in \cite{Koe99}. There, a stronger concept than exact Borel subalgebras has been introduced and studied, aiming at Morita invariance. Our main result shows that the approach taken in \cite{Koe99} cannot work in general. Theorem 5.1 and Corollaries 5.2 and 5.3 in \cite{Koe99} are not correct in general; the use of the induction functors in the proofs of these results is flawed. Therefore, the consequences drawn in \cite{Koe99} and in particular the results announced in the introduction of \cite{Koe99} about applications to category $\mathcal O$ and to generalised Schur algebras are unproven. In Sections \ref{section9} and \ref{section10} we will explain how to use Burt and Butler's structure theory of boxes to set up an induction procedure that should replace the one in \cite{Koe99}. Various expectations raised by the early existence results and by the attempts to prove more such results, can now be explored again. Note that exact Borel subalgebras are different from
and satisfy stronger properties than traditional Borel Schur algebras, as introduced by Green in \cite{Gre90}. Hence, in particular further investigations in the case of algebraic groups look
promising.

The approach carried out here, to study categories $\mathcal{F}(\Delta)$ by means of $A_\infty$-structures and, in particular, boxes, has been outlined for the first time in the last named author's programmatic text \cite{Ovs05} (see also \cite{Ovs99} for related results), which in detail differs substantially from the current approach by using box techniques much more heavily, while the current approach limits the use of boxes to basic and generally accessible material and instead relies on $A_{\infty}$- and differential graded structures. These results were presented by the last named author in lecture courses in Uppsala and in K\"oln.  The article \cite{Ovs05} was in turn motivated by \cite{BKM01}, where for the first time an example of a category $\mathcal{F}(\Delta)$ has been studied using $A_\infty$-structures - or implicitly box structures - in the context of a classification of representation types.

\medskip

The structure of the article is as follows. Section \ref{section2} recalls basic notions from the representation theory of quasi-hereditary algebras, i.e. standard modules, Ringel duality and exact Borel subalgebras. In Section \ref{section3} we introduce the notion of a linear quiver, that although not necessary sometimes simplifies notation in the remainder of the article, and we fix some notation. In Section \ref{section4} we introduce the concept of an $A_\infty$-category and recall how the Yoneda algebra of a module can be regarded as an $A_\infty$-category. In Section \ref{section5} we recall the results of Keller and Lef\`evre-Hasegawa that describe the category of filtered modules $\mathcal{F}(\Delta)$ as the homology of the $A_\infty$-category of twisted modules of the $A_\infty$-category given by the Yoneda algebra. We present this $A_\infty$-category in the language of \cite{Sei08}. Section \ref{section6} translates the results of the previous section, especially the description of the objects of the homology of the $A_\infty$-category of the twisted modules to the language of representations of quivers. Section \ref{section7} introduces the notion of a box and explains how a differential graded structure may be translated into a box. In Section \ref{section8} we prove that the dg structure constructed in Section \ref{section6} can be transferred into a box. Especially the morphisms in the homology of the $A_\infty$-category of twisted modules are identified with morphisms between box representations. Section \ref{sectionexact} introduces an exact structure on the category of box representations for the boxes we have constructed in Section \ref{section7}. Section \ref{section9} then recalls the theory of boxes by Burt and Butler, e.g. introduces the structure of the left and right algebra of a box and explores their connections. The final Section \ref{section10}, then collects all the results to prove the main theorems, and gives some corollaries. Since the notions introduced in this article are quite abstract we have added an appendix which illustrates our techniques in some examples including an example of an algebra that does not have an exact Borel subalgebra, but a Morita equivalent algebra does have such a subalgebra (see \ref{A3}). In \ref{A1},
the smallest non-trivial example from algebraic Lie theory is discussed: the
principal block of category $\mathcal O$ of ${\mathfrak{sl}}(2, {\mathbb C})$.
Exact Borel subalgebras are, in general, not unique and boxes associated
with quasi-hereditary algebras aren't either: Example \ref{A2} exhibits two
boxes with the same categories of representations; one of them is connected,
while the other one is not. Example \ref{A4} provides an 
exact Borel subalgebra that is not associated with a box. More generally,
the exact Borel subalgebras constructed here are different from those 
constructed before in special situations such as blocks of category $\mathcal
O$. The boxes produced by our approach contain stronger information than arbitrary exact Borel subalgebras.

\medskip

Throughout the article we fix an algebraically closed field $\mathbbm{k}$.\index{field, algebraically closed $\mathbbm{k}$}

\section{Quasi-hereditary algebras} \label{section2}

Let $A$ be a basic finite dimensional algebra over $\mathbbm{k}$ with a fixed decomposition of the unit into a sum of primitive orthogonal idempotents $1=e_\mathtt{1}+\dots+e_\mathtt{n}$.

Unless otherwise stated our modules are left modules. By $A-\Mod$ (respectively $A-\operatorname{mod}$) we denote the category of left $A$-modules (respectively finite dimensional left $A$-modules). For a primitive idempotent $e_\mathtt{i}\in A$, we denote the corresponding simple module by $L(\mathtt{i})$, its projective cover by $P(\mathtt{i})$. We denote by $D=\Hom_\mathbbm{k}(-,\mathbbm{k}):A-\modu\to A^{op}-\modu$ the standard $\mathbbm{k}$-duality\index{standard $\mathbbm{k}$-duality $D$}.

Furthermore we let $\leq$ be the standard linear order on $\{\mathtt{1},\dots,\mathtt{n}\}$. (It is possible to work more generally with an arbitrary partial order on the simples, but by refining to a total order, all the notions will stay the same.) Let $\Delta(\mathtt{i})$ be the largest factor module of $P(\mathtt{i})$ having only composition factors $L(\mathtt{j})$ with $\mathtt{j}\leq \mathtt{i}$. The modules $\Delta(\mathtt{i})$ are called \emphbf{standard modules}.\index{standard modules $\Delta(\mathtt{i})$}

The algebra $A$ is then called \emphbf{quasi-hereditary}\index{quasi-hereditary algebra} if $\End_A(\Delta(\mathtt{i}))$ $\cong \mathbbm{k}$ for all $\mathtt{i}$ and the kernel of the natural surjection $P(\mathtt{i})\twoheadrightarrow \Delta(\mathtt{i})$ is filtered by $\Delta(\mathtt{j})$ with $\mathtt{j}>\mathtt{i}$, i.e. there exists a series of submodules of the kernel with subquotients isomorphic to $\Delta(\mathtt{j})$ with $\mathtt{j}>\mathtt{i}$.

This notion can be defined dually using submodules of the indecomposable injectives, the costandard modules $\nabla(\mathtt{i})$.\index{costandard modules $\nabla(\mathtt{i})$} In the theory of quasi-hereditary algebras, the full subcategories $\mathcal{F}(\Delta)$\index{modules filtered by standard modules $\mathcal{F}(\Delta)$} (respectively $\mathcal{F}(\nabla)$\index{modules filtered by costandard modules $\mathcal{F}(\nabla)$}) of modules which can be filtered by standard (respectively costandard) modules (i.e. there exists a series of submodules with subquotients being standard, respectively costandard modules) play an important role. Ringel introduced the notion of the characteristic tilting module $T$\index{characteristic tilting module $T=\bigoplus_{\mathtt{i}=1}^\mathtt{n}T(\mathtt{i})$}, which is the multiplicity free module such that $\mathcal{F}(\Delta)\cap \mathcal{F}(\nabla)=\operatorname{add}(T)$. Here for an $A$-module $N$, we denote by $\operatorname{add}(N)$ the full subcategory of $A-\operatorname{mod}$, which consists of all modules isomorphic to a direct summand of $N^l$ for some $l\geq 0$. Ringel proved that $T=\bigoplus_{\mathtt{i}=1}^\mathtt{n} T(\mathtt{i})$ with $T(\mathtt{i})$ indecomposable.

We denote the \emphbf{Ringel dual}\index{Ringel dual $R(A)$} of $A$ by $R(A):=\operatorname{End}_A(T)^{op}$. To indicate when we are talking about $R(A)$-modules we will add a subscript $R(A)$. The functor $F(-)=\Hom_A(T,-):A-\operatorname{mod}\to R(A)-\operatorname{mod}$ maps $T(\mathtt{i})$ to the indecomposable projective $R(A)$-module $P_{R(A)}(\mathtt{i})$. It will be convenient to use the characterisation of quasi-hereditary algebras given by the standardisation theorem by Dlab and Ringel. This also justifies only to talk about quasi-hereditary algebras, while still covering the - seemingly more general - situation of exceptional collections in general abelian $\mathbbm{k}$-categories.

\begin{thm}[{\cite[Theorem 2]{DR92}}] \label{dlabringelstandardization}
Let $\mathcal{C}$ be an abelian $\mathbbm{k}$-category. A non-empty finite set $\Delta=(\Delta_\mathtt{1},\dots,\Delta_\mathtt{n})$ of objects in $\mathcal{C}$ is called \emphbf{standardisable}\index{standardisable set of objects} (or an \emphbf{exceptional collection}) if $\End(\Delta_\mathtt{i})\cong \mathbbm{k}$, $\Hom(\Delta_\mathtt{i},\Delta_\mathtt{j})\neq 0$ implies $\mathtt{i}\leq \mathtt{j}$ and $\Ext^1(\Delta_\mathtt{i},\Delta_\mathtt{j})\neq 0$ implies $\mathtt{i}<\mathtt{j}$ and those spaces are finite dimensional. Let $\Delta$ be a standardisable set. Then there is a quasi-hereditary algebra $A$, unique up to Morita equivalence, such that the subcategory $\mathcal{F}(\Delta)$ of $\mathcal{C}$ and $\mathcal{F}(\Delta_A)$ are equivalent.
\end{thm}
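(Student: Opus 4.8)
The plan is to realise the desired algebra as $\End_{\mathcal C}(P)^{op}$ for a suitable relative projective generator $P$ of the exact category $\mathcal F(\Delta)$, built from the $\Delta_i$ by universal extensions. Reorder so that $\Delta_1,\dots,\Delta_n$ is compatible with $\le$. Since $\Ext^1(\Delta_n,\Delta_j)\neq 0$ would force $n<j$, the object $\Delta_n$ has no forward and no self-extensions, so put $P_n:=\Delta_n$. Descending, given $P_{i+1},\dots,P_n$, form an iterated universal extension of $\Delta_i$ by the $\Delta_j$ with $j>i$: choose a short exact sequence
\[
0\longrightarrow K_i\longrightarrow P_i\longrightarrow\Delta_i\longrightarrow 0
\]
with $K_i$ a finite direct sum of objects $\Delta_j$, $j>i$, for which the connecting maps $\Hom(K_i,\Delta_k)\to\Ext^1(\Delta_i,\Delta_k)$ are surjective for every $k$. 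Iterating this (each step may create new forward extensions, but the order is finite and directed, so the support of the relevant $\Ext^1$-groups cannot shift upward indefinitely, and all these $\Hom$- and $\Ext^1$-spaces are finite dimensional) terminates and yields $P_i$ whose $\Delta$-filtration has top quotient $\Delta_i$ and all other subquotients among the $\Delta_j$ with $j>i$, and with $\Ext^1(P_i,\Delta_k)=0$ for every $k$.

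Set $P:=\bigoplus_{i=1}^n P_i$ and $A:=\End_{\mathcal C}(P)^{op}$. An induction on filtration length using the finiteness hypotheses shows that $\Hom_{\mathcal C}$ between objects of $\mathcal F(\Delta)$ is always finite dimensional, so $A$ is a finite dimensional $\mathbbm k$-algebra. The comparison functor $G:=\Hom_{\mathcal C}(P,-)\colon\mathcal F(\Delta)\to A-\modu$ is exact on $\mathcal F(\Delta)$, since every short exact sequence there has all three terms $\Delta$-filtered, and hence its kernel term $K$ satisfies $\Ext^1(P_i,K)=0$. Write $\Delta_A(i):=G(\Delta_i)$. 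I would then check: (i) $G(P_i)$ is the indecomposable projective $A$-module attached to the $i$th idempotent, and $G$ is fully faithful, by resolving any $X\in\mathcal F(\Delta)$ by a relative projective presentation $P_1\to P_0\to X\to 0$ with $P_0,P_1\in\operatorname{add}(P)$ and kernels staying inside $\mathcal F(\Delta)$, and applying exactness of $G$; (ii) the essential image of $G$ is exactly $\mathcal F(\Delta_A)$; (iii) $A$ is quasi-hereditary with standard modules $\Delta_A(i)$ --- here $\End(\Delta_i)\cong\mathbbm k$ transports to $\End_A(\Delta_A(i))\cong\mathbbm k$, the implication $\Hom(\Delta_i,\Delta_j)\neq 0\Rightarrow i\le j$ identifies $\Delta_A(i)$ with the genuine standard module of $A$ for the order $\le$, and the $\Delta$-filtration of $P_i$ constructed above is precisely the quasi-hereditary axiom on $\ker\bigl(P_A(i)\twoheadrightarrow\Delta_A(i)\bigr)$.

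For uniqueness up to Morita equivalence, observe that the $P_i$ are characterised intrinsically inside the exact category $\mathcal F(\Delta)$ as the indecomposable objects that are projective for the exact structure and have $\Delta_i$ as the top quotient of their (essentially unique) $\Delta$-filtration; hence any quasi-hereditary $A'$ admitting an exact equivalence $\mathcal F(\Delta_{A'})\simeq\mathcal F(\Delta)$ matching up standard objects has basic version $\End_{\mathcal C}(P)^{op}$, so that $A'$ is Morita equivalent to $A$. The main obstacle is steps (i) and (ii): proving that $\operatorname{add}(P)$ furnishes \emph{enough} relative projectives in $\mathcal F(\Delta)$ and that $G$ is a fully faithful exact functor landing precisely in $\mathcal F(\Delta_A)$, rather than merely in $A-\modu$. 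The construction of the $P_i$, together with the verification that the universal-extension process terminates while keeping everything finite dimensional, is the other delicate point, and it is exactly the directedness hypothesis $\Ext^1(\Delta_i,\Delta_j)\neq 0\Rightarrow i<j$ that makes it go through.
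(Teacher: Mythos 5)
The paper does not actually prove this statement --- it is quoted from Dlab--Ringel \cite{DR92} --- and your proposal is essentially their argument: construct Ext-projective objects $P_\mathtt{i}$ by downward iterated universal extensions (termination guaranteed by the directedness of $\Ext^1$), set $A=\End_{\mathcal{C}}(P)^{op}$, and show that $\Hom_{\mathcal{C}}(P,-)$ is an exact, fully faithful functor with image $\mathcal{F}(\Delta_A)$ sending $\Delta_\mathtt{i}$ to the standard modules, with uniqueness coming from the intrinsic characterisation of the relative projectives in the exact category $\mathcal{F}(\Delta)$. One point to tighten: surjectivity of the connecting maps $\Hom(K_\mathtt{i},\Delta_\mathtt{k})\to\Ext^1(\Delta_\mathtt{i},\Delta_\mathtt{k})$ alone does not make $P_\mathtt{i}$ indecomposable (it could contain superfluous Ext-projective direct summands, e.g.\ copies of $\Delta_\mathtt{n}$), and indecomposability is used both for ``$G(P_\mathtt{i})$ is the indecomposable projective at $\mathtt{i}$'' and for your uniqueness argument; this is repaired by choosing minimal universal extensions (extension classes forming a basis of each $\Ext^1(\Delta_\mathtt{i},\Delta_\mathtt{j})$) and observing $\Hom(P_\mathtt{i},\Delta_\mathtt{i})\cong\mathbbm{k}$, or by passing to the indecomposable summand carrying the top $\Delta_\mathtt{i}$.
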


We now also recall the notion of an exact Borel subalgebra introduced in \cite{Koe95}. As announced in the introduction we will prove that every quasi-hereditary algebra has such a subalgebra up to Morita equivalence.

\begin{defn}
Let $A$ be a quasi-hereditary algebra with $n$ simple modules. Then a subalgebra $B\subseteq A$ is called an \emphbf{exact Borel subalgebra}\index{exact Borel subalgebra} provided
\begin{enumerate}[{(B}1{)}]
\item The algebra $B$ has also $n$ simple modules, denoted $L_B(\mathtt{i})$ for $\mathtt{i}\in \{\mathtt{1},\dots,\mathtt{n}\}$, and $(B,\leq)$ is directed, i.e. $B$ is quasi-hereditary with simple standard modules.
\item The tensor induction functor $A\otimes_B-$ is exact.
\item There is an isomorphism $A\otimes_B L_B(\mathtt{i})\cong \Delta_A(\mathtt{i})$.
\end{enumerate}
\end{defn}

\section{Linear quivers, (co)differential tensor (co)categories}\label{section3}

\subsection{Categories over a field}

A $\mathbbm{k}$-bimodule $V$ is called \emphbf{central} if the left and right $\mathbbm{k}$-actions on $V$ coincide. A category $\mathcal{A}$ is called $\mathbbm{k}$-category or a category over $\mathbbm{k}$ if all sets $\mathcal{A}(X,Y)=\operatorname{Hom}_{\mathcal{A}}(X,Y)$ for $X,Y\in \mathcal{A}$ are central $\mathbbm{k}$-bimodules and the composition of morphisms is $\mathbbm{k}$-bilinear. Equivalently, a $\mathbbm{k}$-linear category is a category enriched over the category of $\mathbbm{k}$-vector spaces. Unless stated otherwise all categories are assumed to be $\mathbbm{k}$-categories. Functors between $\mathbbm{k}$-linear categories and natural transformations between them will also be assumed to be $\mathbbm{k}$-linear. A $\mathbbm{k}$-category $\mathcal{A}$ will be called finite dimensional provided the space $\bigoplus_{X,Y\in \mathcal{A}}\mathcal{A}(X,Y)$ is finite dimensional.

Let $S$ be a set (in many cases $S=\{\mathtt{1},\dots,\mathtt{n}\}$). The \emphbf{trivial $\mathbbm{k}$-category}\index{trivial $\mathbbm{k}$-category $\mathbb{L}$} on $S$ is defined to be the $\mathbbm{k}$-category $\mathbb{L}_S$ with objects being the elements of $S$ and only morphisms being formal multiples of the identity morphisms, i.e. $\mathbb{L}_S(\mathtt{s},\mathtt{s}')=0$ for $\mathtt{s}\neq \mathtt{s'}$ and $\mathbb{L}_S(\mathtt{s},\mathtt{s})=\mathbbm{k}\mathbbm{1}_\mathtt{s}$. We will omit the subscript if it is clear from the context.

A \emphbf{left $\mathcal{A}$-module} is  a functor $F:\mathcal{A}\to \mathbbm{k}-\operatorname{Mod}$. $F$ is called \emphbf{locally finite dimensional} if its image belongs to $\mathbbm{k}-\operatorname{mod}$. Right modules are $\mathcal{A}^{op}$-modules, where $\mathcal{A}^{op}$ is the opposite $\mathbbm{k}$-category of $\mathcal{A}$. Equivalently one can define them to be contravariant functors $\mathcal{A}\to\mathbbm{k}-\operatorname{Mod}$.

Furthermore we assume the existence of an augmentation $\pi_{\mathcal{A}}:\mathcal{A}\to \mathbb{L}_{\mathcal{A}}$ sending all morphisms apart from the scalar multiples of the identities to zero, where $\mathbb{L}_{\mathcal{A}}$ is the subcategory of $\mathcal{A}$ given by all scalar multiples of the identity morphisms. The existence of $\pi_\mathcal{A}$ is equivalent to the category algebra (or path algebra) $\mathbbm{k}[\mathcal{A}]$ of $\mathcal{A}$ to be basic; or to the assumption that $\mathcal{A}$ is skeletal and that all endomorphism algebras in $\mathcal{A}$ are local. The augmentation $\pi_\mathcal{A}:\mathcal{A}\to \mathbb{L}_{\mathcal{A}}$ defines for every $\mathtt{i}\in \mathcal{A}$ a simple module $L(\mathtt{i}): \mathcal{A}\stackrel{\pi_\mathcal{A}}{\rightarrow} \mathbb{L}_\mathcal{A}\stackrel{p_\mathtt{i}}{\rightarrow} \mathbbm{k}-\modu$, where $p_\mathtt{i}$ is given by $p_\mathtt{i}(\mathbbm{1}_\mathtt{j})=0$ for $\mathtt{j}\neq \mathtt{i}$ and $p_\mathtt{i}(\mathbbm{1}_\mathtt{i})=\mathbbm{1}_{\mathbbm{k}}$. Equivalently,  one can define the simple $\mathbb{L}_\mathcal{A}$-module $L(\mathtt{i})$ by $L(\mathtt{i}):=\mathbb{L}_\mathcal{A}(\mathtt{i},-)$ and induce it to $\mathcal{A}$ via $\pi_\mathcal{A}$. A module is called \emphbf{semisimple} if it is the direct sum of simple modules.

 If $A$ and $B$ are $\mathbbm{k}$-categories, the \emphbf{tensor category} $A\otimes B$ defined  by
$$\operatorname{Ob}(\mathcal{A}\otimes \mathcal{B}):=\operatorname{Ob}(\mathcal{A}\times \mathcal{B}); \qquad (\mathcal{A}\otimes \mathcal{B})((X_1,Y_1),(X_2,Y_2))=A(X_1,X_2)\otimes_{\mathbbm{k}}B(Y_1,Y_2)$$
is a $\mathbbm{k}$-category with composition given on simple tensors by $(f_2\otimes g_2)\circ (f_1\otimes g_1):=(f_2\circ f_1)\otimes (g_2\circ g_1)$ for $f_1\in A(X_1,X_2), f_2\in A(X_2,X_3), g_1\in B(X_1,X_2), g_2\in B(X_2,X_3)$. An \emphbf{$\mathcal{A}$-$\mathcal{B}$-bimodule} is just an $\mathcal{A}\otimes \mathcal{B}^{op}$-module.

\subsection{Linear quivers}

A $\mathbbm{k}$-linear central bimodule over $\mathbb{L}$, i.e. a $\mathbbm{k}$-linear functor $Q:\mathbb{L}\otimes \mathbb{L}^{op}\to \mathbbm{k}-\operatorname{Mod}$ is called a \emphbf{$\mathbbm{k}$-quiver}\index{linear quiver $Q$} (or \emphbf{$\mathbbm{k}$-quiver over $S$} or \emphbf{linear quiver}). Since the categories $\mathbb{L}^{op}$ and $\mathbb{L}$ are naturally isomorphic we can and will skip the $op$. Since $\mathbb{L}\otimes \mathbb{L}\cong \mathbb{L}_{S\times S}$ giving a $\mathbbm{k}$-quiver is equivalent to giving a family of vector spaces $Q(\mathtt{s},\mathtt{s}')$ for each $\mathtt{s},\mathtt{s}'\in S$. The set $S$ will be called \emphbf{objects} (or \emphbf{points}, \emphbf{vertices}) of $Q$ and will also be denoted by $Q_0$\index{vertices of a linear quiver $Q_0$ or $S$}.

Morphisms between $\mathbbm{k}$-quivers are just $\mathbbm{k}$-linear natural transformations of the underlying functors. In this way $\mathbbm{k}$-quivers form a category $\operatorname{Qui}_{\mathbbm{k}}$.

Let $\mathcal{A}$ and $\mathcal{B}$ be $\mathbbm{k}$-quivers over $\mathbb{L}$. Then we define their \emphbf{tensor product} $\mathcal{A}\otimes_{\mathbb{L}}\mathcal{B}$ as the $\mathbbm{k}$-quiver given by:
$$(\mathcal{A}\otimes_{\mathbb{L}} \mathcal{B})(\mathtt{i},\mathtt{j}):=\bigoplus_{\mathtt{k}\in \mathbb{L}}\mathcal{A}(\mathtt{k},\mathtt{j})\otimes_{\mathbbm{k}}\mathcal{B}(\mathtt{i},\mathtt{k})$$
for all $\mathtt{i},\mathtt{j}\in \mathbb{L}$.

\subsection{Graded $\mathbbm{k}$-quivers}

Let $\mathcal{C}$ be a category (regarded as a graded category concentrated in degree $0$). A \emphbf{graded module} over $\mathcal{C}$ is defined to be a family of $\mathcal{C}$-modules $\{M_i|i\in\mathbb{Z}\}$. A \emphbf{graded morphism} $f$ of degree $d$ between two graded modules $M=\{M_i\}$ and $M'=\{M'_i\}$ is defined to be a family of ($\mathbbm{k}$-bilinear) natural transformations $\{f_i:M_i\to M_{i+d}'\}$. In this way the graded $\mathcal{C}$-modules form a category. We will denote the identity morphism by $\mathbbm{1}_M:M\to M$ or just by $\mathbbm{1}$ if the module is clear from the context.

A \emphbf{graded $\mathbbm{k}$-quiver} (over $S$) is a graded $\mathbb{L}\otimes \mathbb{L}^{op}$-module. Because the only morphisms in $\mathbb{L}\otimes \mathbb{L}^{op}$ are scalar multiples of the identity morphisms a graded $\mathbbm{k}$-quiver $\mathcal{A}$ consists of a set of objects $S$ and a graded central $\mathbbm{k}$-bimodule $\mathcal{A}(\mathtt{i},\mathtt{j})$ for each pair $\mathtt{i},\mathtt{j}\in S$.

\subsection{Tensor powers and duality}

Let $\mathcal{A}$ be a graded $\mathbbm{k}$-quiver over $S$ and let $n$ be a positive integer. We will define the graded $\mathbbm{k}$-quiver $\mathcal{A}^{\otimes n}$. Here we will usually skip the subscript $\mathbb{L}$. For all $\mathtt{i},\mathtt{j}\in \mathcal{A}$ define the graded $\mathbbm{k}$-bimodule $\mathcal{A}^{\otimes n}(\mathtt{i},\mathtt{j})$ by the following formula:
$$(\mathcal{A}^{\otimes n}(\mathtt{i},\mathtt{j}))_i=\bigoplus_{\substack{(i_1,\dots,i_n)\in \mathbb{Z}^n\\ i=i_1+\dots+i_n}}\bigoplus_{\mathtt{i}_1,\dots,\mathtt{i}_{n-1}\in \mathcal{A}}\mathcal{A}(\mathtt{i}_{n-1},\mathtt{j})_{i_n}\otimes \mathcal{A}(\mathtt{i}_{n-2},\mathtt{i}_{n-1})_{i_{n-1}}\otimes \dots\otimes \mathcal{A}(\mathtt{i},\mathtt{i}_1)_{i_1},$$
For $n=0$ we set $\mathcal{A}^{\otimes 0}:=\mathbb{L}$.

Note that by abuse of notation we often write $(a_1,\dots,a_n)$ instead of $a_1\otimes \dots \otimes a_n\in \mathcal{A}^{\otimes n}$ to make things more readable when several tensor products at different 'levels' occur.

\begin{rmk}\label{remark1}
If $\mathcal{A}$ and $\mathcal{B}$ are linear quivers and $n\geq 0$, then because of the universal property of the direct sum giving a $\mathbbm{k}$-quiver morphism $F:\mathcal{A}^{\otimes n}\to \mathcal{B}$ is the same as giving the restriction of $F$ on each of the direct summands $\mathcal{A}(\mathtt{i}_{n-1},\mathtt{j})\otimes \mathcal{A}(\mathtt{i}_{n-2},\mathtt{i}_{n-1})\otimes \dots\otimes \mathcal{A}(\mathtt{i},\mathtt{i}_1)$.
\end{rmk}

If $\mathcal{A}$ and $\mathcal{B}$ are graded $\mathbbm{k}$-quivers over $S$, then $\Hom_{\mathbb{L}}(\mathcal{A},\mathcal{B})$ is a graded vector space via:
$$\Hom_{\mathbb{L}}(\mathcal{A},\mathcal{B})_k=\prod_{i\in \mathbb{Z}}\Hom_\mathbb{L}(\mathcal{A}_i,\mathcal{B}_{i+k}).$$
Let $\mathcal{A}$ be a category and $N\in \mathbb{N}$. Consider graded $\mathcal{A}$-modules $X_i, Y_i$ and graded morphisms $f_i:X_i\to Y_i$ for $i=1,\dots, N$. We will use the \emphbf{Koszul-Quillen sign rule}\index{Koszul-Quillen sign rule} for the action of the tensor product $f_N\otimes \dots\otimes f_1:X_N\otimes \dots\otimes X_1\to Y_N\otimes \dots\otimes Y_1$, i.e. for elements $x_i\in X_i$ the following equality holds
$$(f_N\otimes \dots\otimes f_1)(x_N\otimes \dots\otimes x_1)=(-1)^{\varepsilon(\overleftarrow{f},\overleftarrow{x})}f_N(x_N)\otimes\dots\otimes f_1(x_1)$$
where
$$\varepsilon(\overleftarrow{f},\overleftarrow{x}):=\sum_{1\leq i<j\leq N}|f_i|\cdot |x_j|,$$
\index{switching sign $\varepsilon(\overleftarrow{v},\overleftarrow{w})$}with $\overleftarrow{f}=(f_N,\dots,f_1)$ and $\overleftarrow{x}=(x_N,\dots,x_1)$, i.e. the sign indicates how often $f_i$ and $x_j$ are switched before arriving at the correct position.\\
If $M=\{M^i\}$ is a graded $\mathbbm{k}$-module we denote by $sM$ the graded $\mathbbm{k}$-module given by $(sM)^i=M^{i+1}$ for all $i\in \mathbb{Z}$. We call $sM$ the \emphbf{suspension}\index{suspension $s$} (or the \emphbf{shift} of $M$). To distinguish elements of $M$ from those of  $sM$ we will denote the image of an element $x\in M$ in $sM$ by $sx$, hence $|sx|=|x|-1$.

There is a duality $\mathbb{D}$\index{duality $\mathbb{D}$} on the category of graded $\mathbbm{k}$-quivers given by $(\mathbb{D}\mathcal{A})(\mathtt{i},\mathtt{j})_k=D(\mathcal{A}(\mathtt{i},\mathtt{j})_{-k})$ for all $\mathtt{i},\mathtt{j}\in \mathcal{A}$, where as before $D=\Hom_{\mathbbm{k}}(-,\mathbbm{k})$ denotes the usual $\mathbbm{k}$-duality on vector spaces. If $\mathbb{L}$ is finite, and $\mathcal{A}(\mathtt{i},\mathtt{j})$ and $\mathcal{B}(\mathtt{i},\mathtt{j})$ are finite dimensional for all $\mathtt{i},\mathtt{j}\in \mathcal{A}$, then
$$\mathbb{D}(\mathcal{A}\otimes \mathcal{B})\cong (\mathbb{D}\mathcal{A})\otimes(\mathbb{D}\mathcal{B}).$$

\subsection{Tensor (pre)category}

Define the (reduced) tensor graded $\mathbbm{k}$-quiver $\overline{T}\mathcal{A}$\index{tensor quiver/algebra} of a graded quiver $\mathcal{A}$ as follows: $\operatorname{Ob}\overline{T}\mathcal{A}=\operatorname{Ob}\mathcal{A}$ and the graded $\mathbbm{k}$-module of morphisms is defined by
$$\overline{T}\mathcal{A}(\mathtt{i},\mathtt{j})=\bigoplus_{n>0}\mathcal{A}^{\otimes n}(\mathtt{i},\mathtt{j})$$
for all $\mathtt{i},\mathtt{j}\in \mathcal{A}$. The $\mathbbm{k}$-quiver $\overline{T}\mathcal{A}$ can be endowed with either the structure of a precategory (also called semicategory) or of a precocategory\footnote{Precategory (respectively precocategory) means that this structure may not have identity morphisms (respectively counits).}. The structure of a precategory is given by $m:\overline{T}\mathcal{A}\otimes \overline{T}\mathcal{A}\to \overline{T}\mathcal{A}$ in the usual way by 'concatenation':
$$(a_n\otimes \dots\otimes a_1)\cdot (a_m'\otimes\dots\otimes a_1'):=(a_n\otimes \dots\otimes a_1\otimes a_m'\otimes \dots \otimes a_1').$$
The restriction of the comultiplication $\Delta^{co}:\overline{T}\mathcal{A}\to \overline{T}\mathcal{A}\otimes \overline{T}\mathcal{A}$ on $\overline{T}\mathcal{A}(\mathtt{i},\mathtt{j})\to \bigoplus_{\mathtt{k}\in \mathcal{A}}\overline{T}\mathcal{A}(\mathtt{k},\mathtt{j})\otimes  \overline{T}\mathcal{A}(\mathtt{i},\mathtt{k})$ is given by
$$\Delta^{co}(a_n\otimes \dots\otimes a_1)=\sum_{i=1}^{n-1}(a_n\otimes \dots\otimes a_{i+1})\otimes (a_{i}\otimes \dots\otimes a_1).$$

In the case of multiplication we will also consider the \emphbf{tensor category} $T\mathcal{A}=\bigoplus_{i=0}^\infty\mathcal{A}^{\otimes n}$, where $\mathcal{A}^{\otimes 0}=\mathbb{L}$ and the multiplication $m$ is defined as above with $m(\mathbbm{1}_{\mathtt{j}},f)=f=m(f,\mathbbm{1}_{\mathtt{i}})$ and $m(\mathbbm{1}_{\mathtt{k}},f)=0=m(f,\mathbbm{1}_{\mathtt{k}})$ for $f\in \mathcal{A}(\mathtt{i},\mathtt{j})$ and $\mathtt{k}\neq \mathtt{i},\mathtt{j}$. We will also denote the tensor category by $\mathbb{L}[\mathcal{A}]$.

A $\mathbbm{k}$-quiver map $d:T\mathcal{A}\to T\mathcal{A}$  of degree $k$ is called a \emphbf{derivation} if $d\circ \mu=\mu\circ (\mathbbm{1}\otimes d+d\otimes \mathbbm{1})$.

A $\mathbbm{k}$-quiver morphism of degree zero $f:\overline{T}\mathcal{A}\to \overline{T}\mathcal{B}$ is called a \emphbf{cofunctor} provided that $\Delta^{co} f=(f\otimes f)\Delta^{co}$. A mapping of graded $\mathbbm{k}$-quivers $b:\overline{T}\mathcal{A}\to \overline{T}\mathcal{A}$ of degree $k$ is called a \emphbf{coderivation} of degree $k$ provided that $\Delta^{co} b=(b\otimes \mathbbm{1}+ \mathbbm{1}\otimes b)\circ \Delta^{co}$.

\section{$A_\infty$-categories}\label{section4}

\subsection{Definitions of $A_\infty$-categories}

There are many equivalent definitions of an $A_\infty$-category, which differ mostly in signs and the direction of morphisms. We restrict ourselves to the definition most useful for our purposes. This is the definition which is based on the bar-construction together with the Koszul-Quillen sign convention as in \cite[3.6]{K01}. Usually an $A_\infty$-category is defined using chains of objects. Using Remark \ref{remark1} we use the equivalent language of $\mathbb{L}$-modules or graded quivers (for an introduction to the subject see also \cite{L-H03, BLM08, LM08}).

\begin{defn}\index{A-infinity category@$A_\infty$ category}
A graded $\mathbbm{k}$-quiver $\mathcal{A}$ is called a (non-unital) \emphbf{$A_\infty$-category} provided there is a graded coderivation $b:\overline{T}s\mathcal{A}\to \overline{T}s\mathcal{A}$ of degree $1$ such that $b^2=0$.
\end{defn}

Due to the following lemma the condition $b^2=0$ can be rewritten as the following family of equalities $(A_n)$, where $n\in \mathbb{N}$, which gives another definition of $A_\infty$-category:

\begin{lem}[{\cite[Lemma 3.6]{K01}}]
To define a (non-unital) $A_\infty$-category is equivalent to defining linear quiver mappings $b_n:(s\mathcal{A})^{\otimes n}\to s\mathcal{A}$ of degree $1$ such that the following relations hold for every $n\in \mathbb{N}$:
$$A_n: \sum_{j=0}^{n-1}\sum_{k=1}^{n-j}b_{n-k+1}(\mathbbm{1}^{\otimes (n-j-k)}\otimes b_k\otimes \mathbbm{1}^{\otimes j})=0.$$
\end{lem}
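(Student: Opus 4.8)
The plan is to unwind the identity $b^2 = 0$ for the coderivation $b$ on $\overline{T}s\mathcal{A}$ using the standard dictionary between coderivations on a tensor coalgebra and their ``Taylor components''. First I would establish the key structural fact: a coderivation $b\colon \overline{T}s\mathcal{A}\to \overline{T}s\mathcal{A}$ is uniquely determined by its composition with the projection $\overline{T}s\mathcal{A}\twoheadrightarrow s\mathcal{A}$ onto the length-one component, and conversely any family of maps $b_n\colon (s\mathcal{A})^{\otimes n}\to s\mathcal{A}$ of degree $1$ assembles (via the co-Leibniz rule $\Delta^{co} b = (b\otimes\mathbbm{1}+\mathbbm{1}\otimes b)\Delta^{co}$) into a unique coderivation. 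Explicitly, on the length-$m$ component the coderivation acts as $\sum_{r+s+t=m}\mathbbm{1}^{\otimes r}\otimes b_s\otimes\mathbbm{1}^{\otimes t}$; this follows by a direct induction on $m$, pairing the universal property of the cofree coalgebra against the definition of $\Delta^{co}$ given above. This step is essentially bookkeeping with the comultiplication formula, but care with the Koszul--Quillen signs (already fixed by the sign convention recalled in the excerpt) is needed to get the signs in $(A_n)$ exactly right — or, following \cite{K01}, one absorbs all signs into the shift $s$, so that working on $s\mathcal{A}$ rather than $\mathcal{A}$ makes the formula $(A_n)$ sign-free as stated.

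Next I would compute $b^2$. Since $b^2$ is again a coderivation (the composite of two coderivations of odd degree is a coderivation, because the graded commutator of coderivations is a coderivation and $b^2 = \tfrac12[b,b]$), it too is determined by its length-one Taylor component. So $b^2 = 0$ if and only if $\pi_1\circ b^2 = 0$ on every $(s\mathcal{A})^{\otimes n}$, where $\pi_1$ is the projection to $s\mathcal{A}$. Now I expand: the first $b$ applied to a length-$n$ tensor produces, by the formula above, a sum of terms $\mathbbm{1}^{\otimes(n-j-k)}\otimes b_k\otimes\mathbbm{1}^{\otimes j}$ landing in length $n-k+1$; composing with the second $b$ and then projecting to length one forces the second factor to be $b_{n-k+1}$ applied to the whole length-$(n-k+1)$ tensor. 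Collecting these gives precisely
$$\sum_{j=0}^{n-1}\sum_{k=1}^{n-j} b_{n-k+1}\bigl(\mathbbm{1}^{\otimes(n-j-k)}\otimes b_k\otimes\mathbbm{1}^{\otimes j}\bigr) = 0,$$
which is the relation $(A_n)$. Conversely, given maps $b_n$ satisfying all the $(A_n)$, one assembles the unique coderivation $b$ with those Taylor components and runs the computation backwards to conclude $\pi_1\circ b^2 = 0$, hence $b^2 = 0$. The equivalence of the two packages of data — the coderivation $b$ versus the family $(b_n)_{n\geq 1}$ — is exactly the bijection from the first step.

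The main obstacle is not the combinatorics of which terms appear — that is forced by the shape of $\Delta^{co}$ — but rather verifying that no extra signs intervene when one commutes the $b_k$'s past identity strands and past each other under the Koszul--Quillen rule, and that the signs are genuinely consumed by passing to $s\mathcal{A}$. Concretely, one must check that for a coderivation of degree $1$ on the suspended quiver, the term $b_{n-k+1}(\mathbbm{1}^{\otimes(n-j-k)}\otimes b_k\otimes\mathbbm{1}^{\otimes j})$ carries coefficient $+1$ rather than $(-1)^{\text{something}}$; this is where the choice of the bar-construction sign convention of \cite[3.6]{K01} does its work, and I would simply cite that reference for the sign verification rather than redo it, since it is entirely standard. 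Everything else is a formal consequence of the cofreeness of $\overline{T}s\mathcal{A}$ as a coalgebra over $\mathbb{L}$.
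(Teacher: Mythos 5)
The paper offers no proof of this lemma at all --- it is quoted verbatim from \cite[Lemma 3.6]{K01} --- and your argument is exactly the standard proof of that cited result (coderivations on the reduced tensor coalgebra $\overline{T}s\mathcal{A}$ are determined by their Taylor components $b_n=\pi_1\circ b|_{(s\mathcal{A})^{\otimes n}}$, the square $b^2$ is again a coderivation, hence $b^2=0$ iff $\pi_1\circ b^2=0$, which unwinds to the relations $(A_n)$ with no extra signs because everything lives on the shifted quiver), so it is correct and fills in precisely what the paper delegates to the reference. One small caution: your parenthetical justification that ``the composite of two coderivations of odd degree is a coderivation'' is false in general (and $b^2=\tfrac12[b,b]$ needs $\operatorname{char}\mathbbm{k}\neq 2$); the instance you actually use is fine, since for a single odd coderivation one checks directly that $(b\otimes\mathbbm{1}+\mathbbm{1}\otimes b)^2=b^2\otimes\mathbbm{1}+\mathbbm{1}\otimes b^2$, the cross terms cancelling by the Koszul--Quillen rule, so $b^2$ is a coderivation in any characteristic.
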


An \emphbf{$A_\infty$-functor} between two $A_\infty$-categories $f:\mathcal{A}\to \mathcal{B}$ is a cofunctor $\overline{T}f:\overline{T}s\mathcal{A}\to \overline{T}s\mathcal{B}$ which commutes with the differentials, i.e.
$$(\overline{T}f\otimes \overline{T}f)\circ \Delta^{co}=\Delta^{co}\circ \overline{T}f,\quad b\circ \overline{T}f=\overline{T}f\circ b.$$

\subsection{Strictly unital and augmented $A_\infty$-algebras}

An $A_\infty$-category $\mathcal{A}$ is called \emphbf{strictly unital}\index{A-infinity category@$A_\infty$ category!strictly unital} provided that for every $\mathtt{i}\in \mathcal{A}$ there exists an element $s\mathbbm{1}_{\mathtt{i}}\in s\mathcal{A}^{-1}(\mathtt{i},\mathtt{i})$, such that for every $sx\in s\mathcal{A}(\mathtt{i},\mathtt{j})$ holds:
$$b_2(s\mathbbm{1}_\mathtt{j}\otimes sx)=sx, b_2(sx\otimes s\mathbbm{1}_\mathtt{i})=(-1)^{|x|}sx, \text{ and } b_n(\cdots\otimes s\mathbbm{1}_\mathtt{i}\otimes \cdots)=0 \text{ for } n\neq 2. $$
This may seem a bit strange but is the translation (via the bar construction) of the fact that $\mathbbm{1}_\mathtt{i}$ behaves as a unit with respect to the multiplication $m_2$ (when removing the $s$).

Obviously, $\mathbb{L}$ has the structure of a strictly unital $A_\infty$-category over itself if we define $\mathbb{L}^0=\mathbb{L}$, $\mathbb{L}^i=0$ for $i\neq 0$ with a unique nonzero multiplication $b_2$. For a strictly unital $A_\infty$-category $\mathcal{A}$ the embedding $\iota:\mathbb{L}\hookrightarrow \mathcal{A}$ is an $A_\infty$-functor. The category $\mathcal{A}$ is called \emphbf{augmented}, provided there exists an augmentation, i.e. an $A_\infty$-functor $\eta:\mathcal{A}\to \mathbb{L}$ such that $\eta \iota=\operatorname{id}_{\mathbb{L}}$.

\subsection{The Yoneda algebra as an $A_\infty$-category and Kadeishvili's theorem}

For a quasi-hereditary algebra $A$ the $A_\infty$-category we have in mind is the Yoneda algebra of the set of standard modules $\Ext^*_A(\oplus_{\mathtt{i}}^{\mathtt{n}} \Delta(\mathtt{i}),\oplus_{\mathtt{i}=\mathtt{1}}^{\mathtt{n}}\Delta(\mathtt{i}))$. This can be constructed as follows. First, one takes a projective resolution $\mathbf{P}(\mathtt{i})$ of $\Delta(\mathtt{i})$ for each $\mathtt{i}$. Denote by $\mathbf{P}^\oplus(\mathtt{i})$ the direct sum of the projective modules occuring in $\mathbf{P}(\mathtt{i})$. Then $\oplus_{j\in \mathbb{Z}}\Hom_A(\oplus_{\mathtt{i}=\mathtt{1}}^{\mathtt{n}}\mathbf{P}^{\oplus}(\mathtt{i}),\oplus_{\mathtt{i}=\mathtt{1}}^{\mathtt{n}}\mathbf{P}^{\oplus}(\mathtt{i})[j])$ has a natural structure of a dg category (i.e. an $A_\infty$-category with $b_n=0$ for $n\geq 3$) where the underlying graded $\mathbbm{k}$-quiver is given by $(\mathtt{i},\mathtt{j})\mapsto \oplus_{j\in \mathbb{Z}}\Hom_A(\mathbf{P}^{\oplus}(\mathtt{i}), \mathbf{P}^{\oplus}(\mathtt{j})[j])$, its $j$-th component being the group of homogeneous $A$-linear maps $f:\mathbf{P}^{\oplus}(\mathtt{i})\to \mathbf{P}^{\oplus}(\mathtt{j})$ of degree $j$ (no compatibility with $d_\mathbf{P}$), $b_1$ induced by the differential of the projective resolution $d_\mathbf{P}$ via $b_1(f)=d_{\mathbf{P}}\circ f-(-1)^jf\circ d_\mathbf{P}$ for $f$ homogeneous of degree $j$ and $b_2$ being the natural composition of graded maps. (compare e.g. \cite[(3.3)]{K01}) We will denote this $A_\infty$-category as $\Hom_A(\mathbf{P},\mathbf{P})$.

In a next step the following theorem by Kadeishvili et alii can be invoked. Before stating it, let us shortly recall the definition of the homology category. For an $A_\infty$-category $\mathcal{A}$ its homology category $H^*\mathcal{A}$ is the $\mathbbm{k}$-quiver $Z^*\mathcal{A}/B^*\mathcal{A}$, where the objects $Z^*\mathcal{A}$ are those of $\mathcal{A}$ and its morphisms are those morphisms $f$ of $\mathcal{A}$ with $b_1(f)=0$. Similarly the objects of $B^*\mathcal{A}$ are those of $\mathcal{A}$ and its morphisms are all $b_1(f)$ for morphisms $f$ of $\mathcal{A}$. In both cases the grading is induced by that of $\mathcal{A}$. We denote the corresponding degree zero parts by $H^0\mathcal{A}$, $Z^0\mathcal{A}$ and $B^0\mathcal{A}$, respectively.  

\begin{thm}[{\cite{Kad82}, see also \cite{Kad80, Smi80, Pro84, GLS91, JL01, Mer99}}]\label{Kadeishvili}
Let $\mathcal{A}$ be an $A_\infty$-category. Then the homology category $H^*\mathcal{A}$ (where the homology is taken with respect to $b_1$) carries the structure of an $A_\infty$-category with $b_1=0$ and $b_2$ induced by the $b_2$ of $\mathcal{A}$.
\end{thm}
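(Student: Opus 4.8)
The plan is to recognise this as the $A_\infty$ minimal model (homotopy transfer) theorem and to prove it by transporting the $A_\infty$-structure along a deformation retraction, organised via the homological perturbation lemma on the bar construction $\overline{T}s\mathcal{A}$.

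First I would fix the transfer data. Since $\mathbbm{k}$ is a field and $b_1$ preserves each space $\mathcal{A}(\mathtt{i},\mathtt{j})$, I can choose, compatibly with the $\mathbb{L}$-bimodule structure, graded quiver maps $p\colon\mathcal{A}\to H^*\mathcal{A}$ and $\iota\colon H^*\mathcal{A}\to\mathcal{A}$ of degree $0$ and a homotopy $h\colon\mathcal{A}\to\mathcal{A}$ of degree $-1$ with $p\iota=\mathbbm{1}$ and $\mathbbm{1}-\iota p=b_1h+hb_1$, and then arrange by the usual successive corrections the side conditions $h^2=0$, $h\iota=0$, $ph=0$. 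If $\mathcal{A}$ is strictly unital one in addition takes $\iota$ to hit $s\mathbbm{1}_\mathtt{i}$ and $h$ to vanish on $s\mathbbm{1}_\mathtt{i}$, so that the transferred structure will again be strictly unital.

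Next I would transport this to the bar level. Write the codifferential of $\mathcal{A}$ as $b=b^{(1)}+\beta$, where $b^{(1)}$ is the coderivation of $\overline{T}s\mathcal{A}$ extending $b_1$ and $\beta$ the coderivation extending the higher operations $b_n$ ($n\ge 2$). Lift $\iota$ and $p$ to the unique cofunctors $\overline{T}sH^*\mathcal{A}\to\overline{T}s\mathcal{A}$ and $\overline{T}s\mathcal{A}\to\overline{T}sH^*\mathcal{A}$ with these linear components and vanishing higher components, and lift $h$, using $\Delta^{co}$, to a homotopy $\mathbf{h}$ on $\overline{T}s\mathcal{A}$; since $H^*\mathcal{A}$ carries the zero differential, this exhibits $(\overline{T}sH^*\mathcal{A},0)$ as a deformation retract of $(\overline{T}s\mathcal{A},b^{(1)})$. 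Now apply the perturbation lemma with perturbation $\beta$: the geometric series $\sum_{k\ge 0}(\mathbf{h}\beta)^k$, evaluated on any fixed tensor word, terminates — each factor $\beta$ is built from operations of arity at least $2$ and the side condition $h\iota=0$ prunes the resulting trees, so only finitely many summands survive and no completeness hypothesis is required. This yields a codifferential $b'$ on $\overline{T}sH^*\mathcal{A}$ and a cofunctor quasi-isomorphism to $\overline{T}s\mathcal{A}$; because the data were lifted compatibly with $\Delta^{co}$ (this is where $h^2=0$ and $ph=0$ are used), $b'$ is a genuine coderivation, so by the bar-construction lemma recalled above its components $b_n'\colon(sH^*\mathcal{A})^{\otimes n}\to sH^*\mathcal{A}$ satisfy the relations $A_n$ and define an $A_\infty$-structure on $H^*\mathcal{A}$.

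Finally I would read off the two remaining assertions from the tree description of $b'$ (one may instead bypass the perturbation lemma and \emph{define} $b_n'$ by Merkulov's sum over planar trees with leaves decorated by $\iota$, internal edges by $h$, vertices by the $b_k$ of $\mathcal{A}$ and root by $p$, verifying $A_n$ directly from the $A_\infty$ relations of $\mathcal{A}$ and $b_1h+hb_1=\mathbbm{1}-\iota p$). The unique one-leaf tree gives $b_1'=pb_1\iota=0$, and the unique two-leaf tree has a single vertex $b_2$ and no internal edge (an internal edge would carry $h$ after $b_1\iota=0$), so $b_2'=pb_2(\iota\otimes\iota)$, which is exactly the product induced on homology classes. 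The step I expect to be the main obstacle is the bookkeeping in the penultimate paragraph: guaranteeing termination of the perturbation series with no completeness assumption (precisely what $h\iota=0$ together with the arity bound buys) and checking that the transferred square-zero endomorphism of the tensor coalgebra is a coderivation — equivalently, that the tree formula satisfies the relations $A_n$ with the correct Koszul--Quillen signs.
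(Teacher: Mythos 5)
The paper does not prove this statement at all: Theorem \ref{Kadeishvili} is imported by citation to Kadeishvili (and the other listed sources), and the construction of the transferred operations is only pointed to later via \cite[(7.8, 7.9)]{K01}. So there is nothing in the paper for your argument to diverge from; what you have written is a self-contained proof of the cited result, and it is the standard one — the minimal model / homotopy transfer theorem, proved either by the homological perturbation lemma on the bar construction $\overline{T}s\mathcal{A}$ or, equivalently, by Merkulov's sum-over-planar-trees formula. Your outline is correct in substance: over a field one can always choose the contraction $(p,\iota,h)$ $\mathbb{L}$-bilinearly and impose the side conditions $h^2=0$, $ph=0$, $h\iota=0$; the perturbation $\beta$ strictly decreases tensor word length (each $b_n$ with $n\geq 2$ has arity at least two), which by itself already forces $(\mathbf{h}\beta)^k$ to vanish on any fixed word for $k$ large — so termination needs only the arity bound, while the side conditions are what you actually need to ensure the lifted contraction is compatible with $\Delta^{co}$, i.e.\ that the transferred square-zero map is a coderivation and the comparison maps are cofunctors (this is the Gugenheim--Lambe--Stasheff/Huebschmann--Kadeishvili point you correctly flag as the main bookkeeping step). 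The identifications $b_1'=pb_1\iota=0$ and $b_2'=pb_2(\iota\otimes\iota)$, which by the relation $A_2$ is exactly the map induced by $b_2$ on homology, give the two extra assertions of the theorem. Your remark about preserving strict unitality is a known refinement requiring the extra normalisations you mention, but it is not needed for the statement as given. Compared with the paper, your route has the advantage of being constructive and explicit (it is in fact the recipe the paper later relies on when it says the $b_i$ on $s\Ext^*(\Delta,\Delta)$ ``can be constructed inductively''), at the cost of re-proving a classical theorem the authors deliberately treat as a black box.
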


Since the quiver given by $(\mathtt{i},\mathtt{j})\mapsto \Ext^*(\Delta(\mathtt{i}),\Delta(\mathtt{j}))$ is the homology of $\Hom_A(\mathbf{P},\mathbf{P})$ there is an $A_\infty$-structure on this quiver. We will denote this $A_\infty$-category as $\Ext^*(\Delta,\Delta)$. The $b_i$ on $s\Ext^*(\Delta,\Delta)$ can be constructed inductively, see \cite[(7.8, 7.9)]{K01} and \cite[Appendix B]{Mad02} for an explanation and examples.

\section{Filtered modules as twisted stalks}\label{section5}

In \cite[(7.4)]{K01} and \cite[(2)]{K02} categories of the form $\mathcal{F}(M)$, where $M=(M_1,\dots,M_r)$ are modules over an associative algebra, are rewritten in terms of twisted objects over an $A_\infty$-category $\mathcal{A}$. Here we will recall these constructions modifying them in the spirit of \cite{Sei08}. Furthermore, since in this paper we are only interested in the subcategory (of the category of $A$-modules) of filtered modules, and not in the triangulated hull of the standard modules in the derived category of $A$, we adapt the notation of \cite{Mad02} and speak of twisted modules for the $A_\infty$-category $\mathcal{A}$. In this notation the $\mathbb{Z}$-graded (or derived) version of it are the twisted complexes. Throughout this section let $\mathcal{A}$ be an $A_\infty$-category.

\subsection{The category $\add \mathcal{A}$}

In a first step we construct what we call the \emphbf{additive enlargement}\index{additive enlargement $\add\mathcal{A}$} $\add \mathcal{A}$ of $\mathcal{A}$. Its $\mathbb{Z}$-graded version is denoted $\Sigma \mathcal{A}$ in \cite{Sei08}, or in a slightly different form (that can be obtained by  choosing bases of the vector spaces appearing) $\Mat\mathbb{Z}\mathcal{A}$ in \cite{K01}. As usual we present it using the bar-construction:
\begin{itemize}
\item The objects of $\add \mathcal{A}$ are the $\mathbb{L}$-modules.
\item For $X,Y\in \add \mathcal{A}$ we define the graded vector space $(\add \mathcal{A})(X,Y)$ by
\[\add \mathcal{A}(X,Y)_k=\bigoplus_{\mathtt{i},\mathtt{j}=\mathtt{1}}^{\mathtt{n}}\Hom_{\mathbbm{k}}(X(\mathtt{i}),Y(\mathtt{j}))\otimes_{\mathbbm{k}} \mathcal{A}(\mathtt{i},\mathtt{j})_k.\]
This can also be regarded as a graded $\mathbb{L}\otimes \mathbb{L}$-module via $(\mathtt{i},\mathtt{j})\mapsto \Hom_{\mathbbm{k}}(X(\mathtt{i}),Y(\mathtt{j}))\otimes \mathcal{A}(\mathtt{i},\mathtt{j})$ for $\mathtt{i},\mathtt{j}\in \mathcal{A}$, which amounts to an additional 'indexing'. Note that $\Hom_\mathbbm{k}(X(\mathtt{i}),Y(\mathtt{j}))$ is assumed to have degree $0$. All the grading is in $\mathcal{A}$ and hence also the shift only acts on the second tensor factor.
\item The graded multiplications $b_n^a$ are given by:
$$b_n^a(f_n\otimes sa_n,\dots,f_1\otimes sa_1)=-f_n\circ \cdots\circ f_1\otimes b_n(sa_n\otimes\dots\otimes  sa_1).$$
\end{itemize}

\begin{lem}
This defines on $\add\mathcal{A}$ the structure of an $A_\infty$-category.
\end{lem}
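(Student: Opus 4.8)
The plan is to verify that the graded coderivation $b^a$ on $\overline{T}s(\add\mathcal{A})$ determined by the maps $b^a_n$ squares to zero, equivalently (by Lemma \cite[Lemma 3.6]{K01}) that the family $(b^a_n)$ satisfies the relations $(A_n)$ for all $n\in\mathbb{N}$. Since $\add\mathcal{A}(X,Y)_k = \bigoplus_{\mathtt{i},\mathtt{j}}\Hom_\mathbbm{k}(X(\mathtt{i}),Y(\mathtt{j}))\otimes_\mathbbm{k}\mathcal{A}(\mathtt{i},\mathtt{j})_k$ with the $\Hom_\mathbbm{k}$ factor sitting in degree $0$, the relations $(A_n)$ for $\add\mathcal{A}$ should reduce, after separating the $\Hom_\mathbbm{k}$ tensor factors from the $\mathcal{A}$-factors, to the relations $(A_n)$ already known to hold for $\mathcal{A}$, up to keeping track of the sign $-1$ in the definition of $b^a_n$ and of the Koszul--Quillen signs produced when the maps $b^a_k$ are slotted into a tensor string via $\mathbbm{1}^{\otimes(n-j-k)}\otimes b^a_k\otimes\mathbbm{1}^{\otimes j}$.

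Concretely, first I would fix homogeneous elements $f_i\otimes sa_i \in s\,\add\mathcal{A}(X_{i-1},X_i)$ (with appropriate source/target $\mathbb{L}$-modules chained together), observe that $|f_i\otimes sa_i| = |sa_i|$ because $|f_i|=0$, and expand a single summand $b^a_{n-k+1}(\mathbbm{1}^{\otimes(n-j-k)}\otimes b^a_k\otimes\mathbbm{1}^{\otimes j})$ applied to $f_n\otimes sa_n,\dots,f_1\otimes sa_1$. Applying the Koszul--Quillen sign rule to move $b^a_k$ past the identity factors acting on the $f_i\otimes sa_i$, and using that the $\Hom_\mathbbm{k}$-components have degree $0$ so they contribute no sign, this summand equals the composite $(-1)^{?}\,(f_n\circ\cdots\circ f_1)\otimes\bigl(b_{n-k+1}(\mathbbm{1}^{\otimes(n-j-k)}\otimes b_k\otimes\mathbbm{1}^{\otimes j})(sa_n\otimes\cdots\otimes sa_1)\bigr)$, where the sign picked up matches exactly the sign in the corresponding summand of $(A_n)$ for $\mathcal{A}$, and where the two factors of $-1$ coming from the two nested applications of the defining formula for $b^a$ cancel the extra factor one would naively expect. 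Summing over $j$ and $k$, the $\mathcal{A}$-part is precisely the left-hand side of $(A_n)$ for $\mathcal{A}$, hence $0$, so $(A_n)$ holds for $\add\mathcal{A}$.

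The main obstacle is purely bookkeeping: confirming that the sign conventions line up, i.e. that the $-1$ inserted in front of $f_n\circ\cdots\circ f_1\otimes b_n(\cdots)$ in the definition of $b^a_n$ is exactly what is needed so that composing two such maps reproduces the signs of $(A_n)$ without a residual sign. One clean way to organise this is to note that $\add\mathcal{A}$ is, up to the degree-zero $\Hom_\mathbbm{k}$-bimodule factors, obtained from $\mathcal{A}$ by the standard matrix/additive-envelope construction, for which the $A_\infty$-axioms are preserved (this is the content of \cite[(7.4)]{K01} and its analogue in \cite{Sei08}); thus one may alternatively cite that construction and only check that the present sign normalisation agrees with it. Either way, once the signs are pinned down there is nothing further to prove: associativity of $\circ$ on the $\Hom_\mathbbm{k}$-factors is automatic, so the sole content is the identity $b^2=0$, which we have just reduced to $b^2=0$ in $\mathcal{A}$.
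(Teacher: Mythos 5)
Your proposal is correct and follows essentially the same route as the paper: expand each summand $b^a_{n-k+1}(\mathbbm{1}^{\otimes(n-j-k)}\otimes b^a_k\otimes\mathbbm{1}^{\otimes j})$ on elements $f_i\otimes sa_i$, use that the $\Hom_\mathbbm{k}$-factors have degree $0$ so $|f_i\otimes sa_i|=|sa_i|$ and the Koszul--Quillen signs match those for the $sa$-string, note that the two minus signs from the nested applications of $b^a$ cancel, and reduce the sum to the relation $(A_n)$ for $\mathcal{A}$. This is exactly the paper's (brief) computation, so nothing further is needed.
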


\begin{proof}
First recall that $$b_k^a(f_{j+k}\otimes sa_{j+k},\dots,f_{j+1}\otimes sa_{j+1})=-f_{j+k}\cdots f_{j+1}\otimes b_k(sa_{j+k},\dots,sa_{j+1}).$$
Applying $b_{n-k+1}^a$ yields:
\footnotesize{
$$b_{n-k+1}^a(f_n\otimes sa_n,\dots,f_{j+k+1}\otimes sa_{j+k+1},-f_{j+k}\cdots f_{j+1}\otimes b_k(sa_{j+k},\dots,sa_{j+1}),f_j\otimes sa_j,\dots,f_1\otimes sa_1)$$
$$=(f_n\circ \cdots\circ f_1)\otimes b_{n-k+1}(sa_n\otimes \dots\otimes sa_{j+k+1}\otimes b_k(sa_{j+k},\dots,sa_{j+1})\otimes sa_{j}\otimes \dots\otimes sa_1)$$
}\normalsize
Summing over the indices with the appropriate signs (from the Koszul-Quillen sign rule) reduces the claim to the corresponding statement for $\mathcal{A}$.
\end{proof}

\subsection{Twisted modules}

A \emphbf{pretwisted module}\index{pretwisted module} over $\mathcal{A}$ is a pair $(X,\delta)$, where $X\in \add \mathcal{A}$ and $s\delta\in (s\add \mathcal{A}(X,X))_0$.

Let $(X,\delta)$ be a pretwisted module. Suppose $s\delta=\sum f_k\otimes sa_k$, where $f_k: X(\mathtt{i}_k)\to X(\mathtt{j}_k)$ and $sa_k\in s\mathcal{A}(\mathtt{i}_k,\mathtt{j}_k)$. Then a pretwisted submodule $(X',\delta')$ is defined by the following: A family of subspaces $X'(\mathtt{i})\subseteq X(\mathtt{i})$ such that $f_k(X'(\mathtt{i}_k))\subseteq X'(\mathtt{j}_k)$ for all $k$. If $f_k': X'(\mathtt{i}_k)\to X'(\mathtt{j}_k)$ denotes the restriction of $f_k$ then define $s\delta':=\sum_{k}f_k'\otimes sa_k$. There is an obvious way to define the notion of a pretwisted factor module $(X/X',\delta/\delta')$.

\begin{lem}
The notions of submodule and factor module do not depend on the choice of the presentation.
\end{lem}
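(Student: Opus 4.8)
The statement to be proved is that the pretwisted submodule $(X',\delta')$ and the pretwisted factor module $(X/X',\delta/\delta')$ constructed from a pretwisted module $(X,\delta)$ do not depend on the chosen presentation $s\delta = \sum_k f_k \otimes sa_k$. The plan is to reduce everything to a statement about the underlying filtration data, and then to observe that this data is intrinsic to $\delta$, independent of how we write it as a sum of simple tensors.

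First I would isolate what actually needs to be checked. Fix the $\mathbb{L}$-module $X$ and a family of subspaces $X'(\mathtt{i}) \subseteq X(\mathtt{i})$. The element $s\delta$ lives in the graded vector space $\bigl(s\,\add\mathcal{A}(X,X)\bigr)_0$, which by the definition of $\add\mathcal{A}$ decomposes as $\bigoplus_{\mathtt{i},\mathtt{j}} \Hom_{\mathbbm{k}}(X(\mathtt{i}),X(\mathtt{j})) \otimes_{\mathbbm{k}} s\mathcal{A}(\mathtt{i},\mathtt{j})$. So $\delta$ has, for each ordered pair $(\mathtt{i},\mathtt{j})$, a well-defined component $\delta_{\mathtt{i}\mathtt{j}} \in \Hom_{\mathbbm{k}}(X(\mathtt{i}),X(\mathtt{j})) \otimes s\mathcal{A}(\mathtt{i},\mathtt{j})$, and this component is determined by $\delta$ alone. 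I would then show that both the condition ``$f_k(X'(\mathtt{i}_k)) \subseteq X'(\mathtt{j}_k)$ for all $k$'' and the resulting restricted element $s\delta'$ depend only on these components $\delta_{\mathtt{i}\mathtt{j}}$, not on any particular expansion into simple tensors.

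The key step is the following: for a fixed pair $(\mathtt{i},\mathtt{j})$, the component $\delta_{\mathtt{i}\mathtt{j}}$ lives in $\Hom_{\mathbbm{k}}(X(\mathtt{i}),X(\mathtt{j})) \otimes s\mathcal{A}(\mathtt{i},\mathtt{j})$, and choosing a basis $\{sa_\alpha\}$ of $s\mathcal{A}(\mathtt{i},\mathtt{j})$ lets us write $\delta_{\mathtt{i}\mathtt{j}} = \sum_\alpha g_\alpha \otimes sa_\alpha$ with the maps $g_\alpha \in \Hom_{\mathbbm{k}}(X(\mathtt{i}),X(\mathtt{j}))$ \emph{uniquely} determined. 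I would argue that the subspace condition is equivalent to $g_\alpha(X'(\mathtt{i})) \subseteq X'(\mathtt{j})$ for every $\alpha$: indeed, any presentation $\sum_k f_k \otimes sa_k$ of $\delta_{\mathtt{i}\mathtt{j}}$ expresses each $sa_k$ as a linear combination of the $sa_\alpha$, whence each $g_\alpha$ is a linear combination of the $f_k$, so if every $f_k$ preserves $X'$ then so does every $g_\alpha$; conversely expanding the given presentation against the basis shows the $f_k$-condition follows from the $g_\alpha$-condition when the $sa_k$ are linearly independent, and in general one first collects terms. Then the restricted element $s\delta'_{\mathtt{i}\mathtt{j}} = \sum_\alpha g_\alpha' \otimes sa_\alpha$, where $g_\alpha'$ is the restriction of $g_\alpha$, is manifestly a function of $\delta_{\mathtt{i}\mathtt{j}}$ alone, since restriction of linear maps is linear and compatible with tensoring by the fixed basis vectors. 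Summing over $(\mathtt{i},\mathtt{j})$ gives that $s\delta'$ depends only on $s\delta$ (and on the chosen subspaces $X'(\mathtt{i})$), which is the claim for submodules; the statement for factor modules follows by the same argument applied to the induced maps on the quotient spaces $X(\mathtt{i})/X'(\mathtt{i})$, which are again well defined once each $g_\alpha$ preserves $X'$.

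The main obstacle, and the only place requiring care, is the interplay between an \emph{arbitrary} presentation $\sum_k f_k \otimes sa_k$ (where the $sa_k$ need not be linearly independent and may repeat) and the canonical basis-expansion of $\delta_{\mathtt{i}\mathtt{j}}$. The cleanest way around this is to avoid bases of $s\mathcal{A}(\mathtt{i},\mathtt{j})$ altogether and instead phrase the subspace condition intrinsically: $\delta_{\mathtt{i}\mathtt{j}}$ lies in the subspace $\Hom_{\mathbbm{k}}(X(\mathtt{i}),X(\mathtt{j});X'(\mathtt{i}),X'(\mathtt{j})) \otimes s\mathcal{A}(\mathtt{i},\mathtt{j})$ of maps carrying $X'(\mathtt{i})$ into $X'(\mathtt{j})$, and the restriction map is induced by the canonical linear map $\Hom_{\mathbbm{k}}(X(\mathtt{i}),X(\mathtt{j})) \supseteq \Hom_{\mathbbm{k}}(X(\mathtt{i}),X(\mathtt{j});X'(\mathtt{i}),X'(\mathtt{j})) \to \Hom_{\mathbbm{k}}(X'(\mathtt{i}),X'(\mathtt{j}))$ tensored with $\id$ on $s\mathcal{A}(\mathtt{i},\mathtt{j})$. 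Since tensoring over $\mathbbm{k}$ is exact and a linear map applied to an element gives a well-defined result regardless of how the element is written, both assertions are then immediate; I would present the argument in this coordinate-free form to keep it short and to make the independence of presentation self-evident.
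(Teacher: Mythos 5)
Your argument is correct and, in its final coordinate-free form, it amounts to the same well-definedness observation as the paper's proof, just packaged with a different decomposition. The paper fixes a complement $Y$ with $X=X'\oplus Y$ and uses additivity of $\Hom$ and $\otimes$ to split $\Hom(X,X)\otimes s\mathcal{A}$: the existence of one presentation whose coefficients preserve $X'$ forces the $\Hom(X',Y)\otimes s\mathcal{A}$-component of $s\delta$ to vanish, so any other presentation may be replaced by one whose coefficients preserve $X'$, and the restricted (or quotient) datum is then the image of $s\delta$ under a fixed linear map. You instead work componentwise in $(\mathtt{i},\mathtt{j})$ and either expand against a basis of $s\mathcal{A}(\mathtt{i},\mathtt{j})$, where the coefficient maps $g_\alpha$ are canonical, or --- in your preferred formulation --- observe directly that $s\delta$ lies in the subspace of $X'$-preserving maps tensored with $s\mathcal{A}$ and that restriction (resp.\ passage to quotients) is induced by a canonical linear map tensored with the identity; this version avoids choosing either a complement of $X'$ or a basis of $s\mathcal{A}$ and makes independence of the presentation immediate, so it is arguably the cleanest of the three. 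One caution: the ``converse'' asserted in your middle paragraph, that the $g_\alpha$-condition implies the $f_k$-condition, is only valid for presentations with linearly independent $sa_k$; for an arbitrary presentation one can add and subtract a term $h\otimes sa$ with $h$ not preserving $X'$, and collecting terms changes the $f_k$, so the individual $f_k$ of the original presentation need not preserve $X'$. Fortunately this direction is never needed: the lemma only requires that the existence of some preserving presentation is an intrinsic condition on $s\delta$, and that $s\delta'$ and $s(\delta/\delta')$ are determined by $s\delta$ and the subspaces $X'(\mathtt{i})$, which your forward direction together with the coordinate-free description of the restriction map already establishes.
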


\begin{proof}
We only prove the case of submodules. The case of factor modules can be proved analogously. Since $\mathbbm{k}$-vector spaces form a semisimple category it is possible to write $X=X'\oplus Y$. Then $\Hom(X,X)\otimes s\mathcal{A}=(\Hom(X',X')\otimes s\mathcal{A})\oplus (\Hom(X',Y)\otimes s\mathcal{A})\oplus (\Hom(Y,X)\otimes s\mathcal{A})$ by additivity of $\Hom$ and $\otimes$. If $\sum_j g_j\otimes sb_j$ is another presentation of $s\delta=\sum f_k\otimes sa_k$ in  $\Hom(X,X)\otimes s\mathcal{A}$ with $f_k:X'(\mathtt{i}_k)\to X'(\mathtt{j}_k)$, then the part with $X'\to Y$ vanishes and hence one can also assume that $g_j:X'\to X'$. 
\end{proof}

A \emphbf{twisted module}\index{twisted modules $\twmod \mathcal{A}$} over $\mathcal{A}$ is defined to be a pretwisted module $(X,\delta)$ such that:
\begin{enumerate}[{(TM}1{)}]
\item There exists a filtration $(0,0)=(X_0,\delta_0)\subset (X_1,\delta_1)\subset\dots\subset (X_N,\delta_N)=(X,\delta)$ by pretwisted submodules such that the quotients have zero differential, i.e. $(X_i/X_{i-1},$ $\delta_i/\delta_{i-1})=(X_i/X_{i-1},0)$. This condition will be called \emphbf{triangularity} in the remainder.
\item The \emphbf{Maurer-Cartan equation} is satisfied, i.e.
$$\sum_{i=1}^\infty b_i^a(s\delta\otimes \dots\otimes s\delta)=0.$$
\end{enumerate}

Here the first condition guarantees that the second condition makes sense, i.e. that the sum is finite. The second condition will guarantee that $\twmod \mathcal{A}$ is an $A_\infty$-category for the following notion of morphisms:

The morphisms from $(X,\delta_X)$ to $(Y,\delta_Y)$ are defined by $\twmod\mathcal{A}((X,\delta_X),(Y,\delta_Y)):=\add\mathcal{A}(X,Y)$. For $t_i\in \twmod \mathcal{A}((X_{i-1},\delta_{X_{i-1}}), (X_i,\delta_{X_i}))$ the multiplications are given by
$$b_n^{tw}(st_n\otimes \dots\otimes st_1):=\sum_{i_0,\dots,i_n\geq 0}b_{i_0+\dots+i_n+n}^a(s\delta^{\otimes i_n}_{X_n}\otimes st_n\otimes s\delta^{\otimes i_{n-1}}_{X_{n-1}}\otimes \dots\otimes st_1\otimes s\delta^{\otimes i_0}_{X_0}).$$

\begin{lem}
The operations defined above define an $A_\infty$-category structure on $\operatorname{twmod}\mathcal{A}$.
\end{lem}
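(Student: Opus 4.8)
The plan is to verify that the operations $b_n^{tw}$ form a coderivation of square zero on $\overline{T}s(\twmod\mathcal{A})$, equivalently that they satisfy the relations $(A_n)$ from Lemma~\ref{Kadeishvili}'s predecessor. The cleanest route is \emph{not} to expand $(A_n)$ directly but to exhibit $b^{tw}$ as a conjugate (or deformation) of $b^a$ by the "big" differential one gets from adjoining the $\delta$'s. Concretely, for a twisted module $(X,\delta_X)$ one should think of $\delta_X$ as a Maurer--Cartan element of the (pro-nilpotent, by triangularity (TM1)) $A_\infty$-algebra $\add\mathcal{A}(X,X)$, and of $b^{tw}$ as the standard $\delta$-twisted $A_\infty$-structure attached to a collection of such Maurer--Cartan elements. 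Thus the first step is to record, using triangularity, that for any finite string $st_n\otimes\cdots\otimes st_1$ of morphisms between twisted modules the double sum defining $b_n^{tw}$ has only finitely many nonzero terms, so all expressions below are well defined; this is exactly the point of (TM1), as already remarked after the definition.

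Next I would set up the bookkeeping. Define, for the string of twisted modules $(X_0,\delta_0),\dots,(X_n,\delta_n)$ appearing as sources/targets, the element obtained by inserting arbitrarily many copies of the relevant $s\delta_{X_i}$ between consecutive $st_i$ (and before $st_1$, and after $st_n$). The key formal identity is that applying $b^a$ (as a coderivation on $\overline{T}s(\add\mathcal{A})$) to such an "inflated" tensor, and then collapsing, reproduces the $(A_n)$-combination of the $b^{tw}$ together with extra terms that are precisely the Maurer--Cartan expression $\sum_i b_i^a(s\delta\otimes\cdots\otimes s\delta)$ inserted in one slot. Since each $(X_i,\delta_i)$ is a twisted module, (TM2) kills exactly those extra terms. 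Carrying this out amounts to: (i) writing $b^{tw}$ coalgebra-theoretically as $b^{tw}=\pi\circ b^a\circ\iota_\delta$ where $\iota_\delta\colon\overline{T}s(\twmod\mathcal{A})\to\overline{T}s(\add\mathcal{A})$ is the cofunctor-like map "insert all possible $\delta$'s" and $\pi$ is the projection forgetting them; (ii) checking $\iota_\delta$ is a morphism of coalgebras up to the Maurer--Cartan defect; (iii) deducing $(b^{tw})^2=\pi\circ (b^a)^2\circ\iota_\delta + (\text{terms }\propto\text{ MC}) = 0$ from $(b^a)^2=0$ (the previous Lemma) and (TM2). One also has to check the signs match, which is where the Koszul--Quillen rule and the global sign $-$ in the definition of $b_n^a$ enter; I would organise this by tracking, for each reordering of a $\delta$ past a $t$ or another $\delta$, the degree $|s\delta|=0$, so in fact the $\delta$-insertions carry no sign and the sign analysis reduces to that of $\add\mathcal{A}$, already done.

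The main obstacle is the combinatorial/sign verification in step (ii)--(iii): one must show that \emph{every} term in the double expansion of $(b^{tw})^2$ is accounted for, with the right sign, either by a term of $(b^a)^2$ applied to an inflated tensor or by a Maurer--Cartan insertion on one of the modules $X_0,\dots,X_n$; there is a genuine risk of miscounting terms where a $b_k$ straddles the boundary between a $\delta$-block and a $t$, or swallows an entire $\delta$-block together with adjacent $t$'s. I expect this to be handled by the same "reduce to the corresponding statement for $\mathcal{A}$'' device used in the proof of the previous Lemma: after the $\delta$-insertion formalism is in place, the identity $(A_n)$ for $b^{tw}$ becomes a formal consequence of $(A_m)$ for $b^a$ (all $m$) plus the finitely many Maurer--Cartan relations, with no residual computation. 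If one prefers to avoid the coalgebra language entirely, the alternative is a direct but tedious induction on the length $N$ of the triangular filtration in (TM1), adding one graded piece at a time and using that the new piece contributes only "lower-triangular" components to $\delta$; this makes finiteness and convergence transparent but multiplies the sign bookkeeping, so I would only fall back on it if the coalgebraic argument's signs prove stubborn.
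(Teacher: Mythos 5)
Your plan is correct and is essentially the standard twisting argument: finiteness of the insertion sums from (TM1), the $A_\infty$-relations for $b^{tw}$ reduced to those for $b^a$ plus Maurer--Cartan defect terms killed by (TM2), and no extra signs because $|s\delta|=0$. The paper itself gives no argument here but simply cites Lef\`evre-Hasegawa \cite[(6.1.2)]{L-H03}, and what you outline is precisely the proof found there, so you are supplying the omitted details rather than diverging from the paper's route.
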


\begin{proof}
A proof can be found e.g. in \cite[(6.1.2)]{L-H03}.
\end{proof}

By the theorem of Kadeishvili (see Theorem \ref{Kadeishvili}) $H^*(\twmod\mathcal{A})$ carries an $A_\infty$-structure. And Keller and Lef\`evre-Hasegawa have proven the following theorem:

\begin{thm}[{\cite[(7.7)]{K01}}, see also {\cite[Appendix B, Theorem 3.1]{Mad02}}]\label{kellerlefevre}
Suppose we have an $\mathtt{n}$-tuple of $A$-modules $\Delta=(\Delta(\mathtt{1}),\dots\Delta(\mathtt{n}))$ for some finite dimensional algebra $A$. Let $\mathcal{A}=\Ext^*_A(\Delta,\Delta)$ be the $A_\infty$-category given by the extensions. Then $\mathcal{F}(\Delta)\cong H^0(\twmod\mathcal{A})$.
\end{thm}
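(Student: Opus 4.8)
The plan is to reduce the statement to the corresponding result in the homotopy-theoretic literature (Keller, Lef\`evre-Hasegawa, Madsen) and to explain the bookkeeping that makes it work in our setup. First I would recall that a projective resolution $\mathbf{P}(\mathtt{i})$ of each $\Delta(\mathtt{i})$ gives a dg category $\Hom_A(\mathbf{P},\mathbf{P})$ whose homology, with respect to $b_1$, is precisely the $A_\infty$-category $\mathcal{A}=\Ext^*_A(\Delta,\Delta)$; the $A_\infty$-structure on $\mathcal{A}$ is the one transported along a contraction of $\Hom_A(\mathbf{P},\mathbf{P})$ onto its homology by Theorem \ref{Kadeishvili}. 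The key point is that, up to an $A_\infty$-equivalence (in fact an $A_\infty$-quasi-isomorphism), the category $\twmod\mathcal{A}$ and the category $\twmod(\Hom_A(\mathbf{P},\mathbf{P}))$ of twisted modules over the dg model coincide on homology; this is the content of the invariance of twisted objects under $A_\infty$-quasi-isomorphisms, which is the engine behind \cite[(7.7)]{K01} and \cite[Appendix B, Theorem 3.1]{Mad02}.

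The main construction is then the identification of $H^0(\twmod(\Hom_A(\mathbf{P},\mathbf{P})))$ with $\mathcal{F}(\Delta)$. Here a twisted module $(X,\delta)$ over the dg category $\Hom_A(\mathbf{P},\mathbf{P})$, by the triangularity condition (TM1) together with the Maurer--Cartan equation (TM2), assembles the data $X(\mathtt{i})\otimes_{\mathbbm{k}}\mathbf{P}^\oplus(\mathtt{i})$ into an honest (one-sided twisted) complex of projective $A$-modules; the triangularity ensures this complex is built by iterated extensions from shifted copies of the $\mathbf{P}(\mathtt{i})$, and the Maurer--Cartan equation says precisely that the total differential squares to zero. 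Taking the $0$-th homology of this complex of projectives sends $(X,\delta)$ to an $A$-module with a filtration whose subquotients are the $\Delta(\mathtt{i})$, i.e. an object of $\mathcal{F}(\Delta)$. Conversely, any module $M\in\mathcal{F}(\Delta)$ has a $\Delta$-filtration, and splicing the chosen resolutions $\mathbf{P}(\mathtt{i})$ along that filtration (using that $\Ext^{>0}$ between the relevant $\Delta$'s is encoded in $\mathcal{A}$) produces a twisted module mapping to $M$; one checks this is inverse up to the homotopies built into the $A_\infty$-structure. On morphisms, $\twmod\mathcal{A}((X,\delta_X),(Y,\delta_Y))=\add\mathcal{A}(X,Y)$ together with the twisted differential $b_1^{tw}$ computes, in degree $0$ homology, exactly $\Hom_A$ of the corresponding complexes, hence $\Hom_{\mathcal{F}(\Delta)}$.

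I would present the argument by first treating the dg model, where everything is classical (twisted complexes over a dg category, as in \cite{Sei08}, \cite{L-H03}, \cite{Mad02}), and then transferring along the $A_\infty$-quasi-isomorphism $\mathcal{A}\simeq\Hom_A(\mathbf{P},\mathbf{P})$ using the homotopy invariance of $\twmod$. The main obstacle is the homotopy invariance step: one must verify that an $A_\infty$-quasi-isomorphism $\varphi\colon\mathcal{A}\to\mathcal{B}$ induces an $A_\infty$-functor $\twmod\mathcal{A}\to\twmod\mathcal{B}$ which is a quasi-equivalence (so in particular an equivalence on $H^0$), compatibly with the exact structures; this is where the signs in the definitions of $b_n^a$ and $b_n^{tw}$, and the careful use of the Koszul--Quillen sign rule, actually matter. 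Everything else — the dictionary between one-sided twisted complexes of projectives and $\Delta$-filtered modules, and the comparison of $\Hom$-spaces — is a direct, if notation-heavy, unwinding of the definitions, so I would cite \cite[(7.7)]{K01} and \cite[Appendix B, Theorem 3.1]{Mad02} for the technical core and limit the write-up to indicating how our conventions match theirs.
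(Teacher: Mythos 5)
The paper does not prove Theorem \ref{kellerlefevre} at all: it is imported verbatim from \cite[(7.7)]{K01} and \cite[Appendix B, Theorem 3.1]{Mad02}, and your proposal likewise defers the technical core (the dictionary between one-sided twisted complexes of projectives and $\Delta$-filtered modules, plus invariance of twisted objects under $A_\infty$-quasi-isomorphism) to exactly those references, merely sketching the standard argument they contain. So your route is essentially the same as the paper's, and the sketch you give is consistent with how the cited proofs actually run.
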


\section{Filtered modules as representations of quivers}\label{section6}

In this section we dualise the construction given by Keller and Lef\`evre-Hasegawa. This makes it easier to later translate it to the box setting. Since this requires the compatibility of taking tensor products and dualising we need the following additional assumption:

\emphbf{Throughout this section let $\mathcal{A}$ be an $A_\infty$-category such that the spaces $\mathcal{A}(\mathtt{i},\mathtt{j})$ are (totally) finite dimensional for all $\mathtt{i},\mathtt{j}\in \mathcal{A}$}.

\subsection{The convolution category $\conv \mathcal{A}$ }
\label{subsectionconvolutioncategory}

 We now introduce the $A_\infty$-category $\conv  \mathcal{A}$\index{convolution category $\conv \mathcal{A}$} as follows:
\begin{itemize}
\item The objects coincide with those of $\add \mathcal{A}$, i.e. are the $\mathbb{L}$-modules.
\item As in the definition of $(\add \mathcal{A})(X,Y)$ for $X,Y\in \conv \mathcal{A}$ we define a graded $\mathbb{L}\otimes \mathbb{L}$-module $(\conv \mathcal{A})(X,Y)$ via:
$$((\conv \mathcal{A})(X,Y))(\mathtt{i},\mathtt{j})_k:=\Hom_{\mathbbm{k}}((\mathbb{D}\mathcal{A})(\mathtt{i},\mathtt{j})_{-k},\Hom_{\mathbbm{k}}(X(\mathtt{i}),Y(\mathtt{j})))$$
for $\mathtt{i},\mathtt{j}\in \mathcal{A}$, the corresponding graded vector space being obtained by summing up over all $\mathtt{i}$ and $\mathtt{j}$. Note that again $\Hom_{\mathbbm{k}}(X(\mathtt{i}),Y(\mathtt{j}))$ has degree $0$. To see how the shift acts, compute:
\begin{align*}
((s\conv\mathcal{A})(X,Y)(\mathtt{i},\mathtt{j}))_k&=(\conv\mathcal{A}(X,Y)(\mathtt{i},\mathtt{j}))_{k-1}\\
&=\Hom_{\mathbbm{k}}((\mathbb{D}\mathcal{A})(\mathtt{i},\mathtt{j})_{-(k-1)},\Hom_{\mathbbm{k}}(X(\mathtt{i}),Y(\mathtt{j})))\\
&=\Hom_{\mathbbm{k}}(D(\mathcal{A}(\mathtt{i},\mathtt{j})_{k-1}),\Hom(X(\mathtt{i}),Y(\mathtt{j})))\\
&=\Hom_{\mathbbm{k}}(D((s\mathcal{A})(\mathtt{i},\mathtt{j})_k),\Hom(X(\mathtt{i}),Y(\mathtt{j})))
\end{align*}
Thus, the shift just operates on $\conv (\mathcal{A})$ as it does on $\mathcal{A}$ on the right hand side.
\item \label{definitionofd} Let $d_n:=\mathbb{D}b_n:\mathbb{D}s\mathcal{A}\to \mathbb{D}s\mathcal{A}^{\otimes n}$, i.e. the map defined by $d_n(\chi)(sa_n\otimes \dots\otimes sa_1)=\chi(b_n(sa_n\otimes \dots\otimes sa_1))$. Using Sweedler notation we write
 $d_n(\chi)=\sum_{(\chi)}\chi_{(n)}\otimes \dots\otimes \chi_{(1)}$. Let $\overleftarrow{\chi}:=(\chi_{(n)},\dots,\chi_{(1)})$ and similarly $\overleftarrow{sa}:=(sa_n,\dots,sa_1)$. Then we get
\begin{align*}d_n(\chi)(sa_n\otimes\dots\otimes sa_1)&=\sum_{(\chi)}(\chi_{(n)}\otimes \dots\otimes \chi_{(1)})(sa_n\otimes \dots\otimes sa_1)\\&=\sum_{(\chi)}(-1)^{\varepsilon(\overleftarrow{\chi},\overleftarrow{sa})}\chi_{(n)}(sa_n)\otimes\dots\otimes \chi_{(1)}(sa_1),\end{align*}
where $\varepsilon(\overleftarrow{\chi},\overleftarrow{sa})=\sum_{1\leq i<j\leq n}|\chi_{(i)}|\cdot |sa_j|$ as before. Furthermore for homogeneous $sF_i:\mathbb{D}s\mathcal{A}(\mathtt{k}_{i-1},\mathtt{k}_{i})\to \Hom_{\mathbbm{k}}(X(\mathtt{k}_{i-1}),X(\mathtt{k}_{i}))$ define $[sF_n\otimes \dots\otimes sF_1]$ to be the map given by $\chi_n\otimes \dots\otimes \chi_1\mapsto (-1)^{\varepsilon(\overleftarrow{\chi},\overleftarrow{sF})+1}sF_n(\chi_n)\otimes \dots\otimes sF_1(\chi_1)$, where $\overleftarrow{sF}=(sF_n,\dots,sF_1)$. Let $v_n:\overleftarrow{\bigotimes_{i=0}^{n-1}}\Hom_{\mathbbm{k}}(X(\mathtt{k}_{i}),X(\mathtt{k}_{i+1}))\to \Hom_{\mathbbm{k}}(X(\mathtt{k}_0),X(\mathtt{k}_n))$ be the composition map\index{composition map $v_n$ } given by $v_n(f_n\otimes \dots\otimes f_1)=f_n\circ \cdots\circ f_1$. Define $b_n^c(sF_n,\dots,sF_1)$ as the composition
\begin{align*}
\mathbb{D}s\mathcal{A}(\mathtt{k}_0,\mathtt{k}_n)&\stackrel{d_n}{\to} \bigoplus_{\mathtt{k}_1,\dots,\mathtt{k}_{n-1}\in \mathbb{D}s\mathcal{A}}\overleftarrow{\bigotimes_{i=0}^{n-1}}\mathbb{D}s\mathcal{A}(\mathtt{k}_{i},\mathtt{k}_{i+1})\\
&\stackrel{[sF_n\otimes \dots\otimes sF_1]}{\longrightarrow} \bigoplus_{\mathtt{k}_1,\dots,\mathtt{k}_{n-1}\in \mathbb{D}s\mathcal{A}}\bigotimes_{i=0}^{n-1} \Hom_{\mathbbm{k}}(X(\mathtt{k}_{i}),X(\mathtt{k}_{i+1}))\\
&\stackrel{v_n}{\to}\Hom_{\mathbbm{k}}(X(\mathtt{k}_0),X(\mathtt{k}_n)).
\end{align*}
\end{itemize}

Let $V,W$ be $\mathbbm{k}$-vector spaces and let $V$ be finite dimensional. Then there exists a natural isomorphism of vector spaces\index{M@$M$}
$$M:W\otimes_{\mathbbm{k}} V\cong \Hom_{\mathbbm{k}}(V^*,W)$$
where $M(w\otimes v)(\chi)=\chi(v)w$. In particular this gives an isomorphism of graded vector spaces $M_{X,Y}: (\add \mathcal{A})(X,Y)\to (\conv  \mathcal{A})(X,Y)$.

\begin{lem}\label{isofunctor}
We have $Mb_n^a=b_n^cM^{\otimes n}$. In particular the mappings $b_n^c$ endow $\conv \mathcal{A}$ with the structure of an $A_\infty$-category such that $M$ defines an $A_\infty$-isofunctor between the $A_\infty$-categories $\add \mathcal{A}$ and $\conv  \mathcal{A}$.
\end{lem}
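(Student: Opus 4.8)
The plan is to verify the single identity $M b_n^a = b_n^c M^{\otimes n}$ as maps $(\add\mathcal{A})(X,X)^{\otimes n}\to (\conv\mathcal{A})(X,X)$, since once this is known, the two $A_\infty$-relations — the $\Delta^{co}$-cofunctor identity for $M$ and the compatibility of $M$ with the differentials $b^a$, $b^c$ — follow formally: $M$ is an invertible $\mathbbm{k}$-quiver map of degree $0$ (it is built componentwise from the natural isomorphism $W\otimes V\cong\Hom_{\mathbbm{k}}(V^*,W)$), so it is automatically a cofunctor on the tensor coalgebras, and transporting the codifferential $b^a$ along $M$ produces \emph{some} $A_\infty$-structure on $\conv\mathcal{A}$; the point of the lemma is that this transported structure is exactly the one whose components are the $b_n^c$ defined above. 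So everything reduces to the componentwise computation, which one checks on simple tensors $(f_n\otimes sa_n)\otimes\cdots\otimes(f_1\otimes sa_1)$ in $(\add\mathcal{A})(X,X)^{\otimes n}$ (using Remark \ref{remark1} to reduce to such generators).

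First I would unravel the left-hand side: $b_n^a(f_n\otimes sa_n,\dots,f_1\otimes sa_1) = -(f_n\circ\cdots\circ f_1)\otimes b_n(sa_n\otimes\cdots\otimes sa_1)$, and then apply $M = M_{X,X}$, which sends an element $g\otimes(sa)$ of $\Hom_{\mathbbm{k}}(X(\mathtt{i}),X(\mathtt{j}))\otimes s\mathcal{A}(\mathtt{i},\mathtt{j})$ to the functional $\chi\mapsto \chi(sa)\,g$ on $D(s\mathcal{A}(\mathtt{i},\mathtt{j}))$. So $M b_n^a(\dots)$ is the map $\chi\mapsto -\chi\bigl(b_n(sa_n\otimes\cdots\otimes sa_1)\bigr)\,(f_n\circ\cdots\circ f_1)$. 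For the right-hand side I would trace a tensor $\chi_n\otimes\cdots\otimes\chi_1\in D(s\mathcal{A})^{\otimes n}$ (abusively, an argument of $b_n^c(M(f_n\otimes sa_n),\dots,M(f_1\otimes sa_1))$ pre-composed with $d_n$): the map $d_n=\mathbb{D}b_n$ turns evaluation-at-$\chi$ into $\sum_{(\chi)}(-1)^{\varepsilon(\overleftarrow{\chi},\overleftarrow{sa})}\chi_{(n)}(sa_n)\cdots\chi_{(1)}(sa_1)$; the bracket $[sF_n\otimes\cdots\otimes sF_1]$ with $sF_i = M(f_i\otimes sa_i)$ contributes the extra sign $(-1)^{\varepsilon(\overleftarrow{\chi},\overleftarrow{sF})+1}$ and replaces each $\chi_{(i)}$ by $\chi_{(i)}(sa_i)\,f_i$; then $v_n$ composes to $\chi_{(n)}(sa_n)\cdots\chi_{(1)}(sa_1)\,(f_n\circ\cdots\circ f_1)$. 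Summing over $(\chi)$ recombines $\sum_{(\chi)}\chi_{(n)}(sa_n)\cdots\chi_{(1)}(sa_1) = \chi\bigl(b_n(sa_n\otimes\cdots\otimes sa_1)\bigr)$ by definition of $d_n$, leaving $-\chi(b_n(\dots))\,(f_n\circ\cdots\circ f_1)$, which matches the left-hand side.

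The main obstacle, and the only genuinely delicate part, is the sign bookkeeping: one must check that the Koszul–Quillen signs $\varepsilon(\overleftarrow{\chi},\overleftarrow{sa})$ produced by $d_n$, the sign $\varepsilon(\overleftarrow{\chi},\overleftarrow{sF})+1$ built into the bracket $[sF_n\otimes\cdots\otimes sF_1]$, and the $-1$ in the definition of $b_n^a$ all conspire to give exactly the $-1$ in front on both sides — with no residual sign. The key observations that make this work are that $\Hom_{\mathbbm{k}}(X(\mathtt{i}),X(\mathtt{j}))$ is placed in degree $0$, so the $f_i$ carry no degree and never contribute Koszul signs, and that $|\chi_{(i)}| = |sF_i|$ after $M$ is applied (since $M$ has degree $0$ and $f_i$ has degree $0$), so $\varepsilon(\overleftarrow{\chi},\overleftarrow{sF}) = \varepsilon(\overleftarrow{\chi},\overleftarrow{sa})$ and the two occurrences cancel, leaving only the single sign $-1$ designed into the definition of $[sF_n\otimes\cdots\otimes sF_1]$. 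I would present this cancellation explicitly and then simply remark that the $A_\infty$-relations $A_n$ for $b^c$ transport from those for $b^a$ (equivalently, $\mathbb{D}$ of the relations $b^2=0$), and that $M$ being an isomorphism of $\mathbbm{k}$-quivers of each degree makes the corresponding $\overline{T}M$ an $A_\infty$-isofunctor.
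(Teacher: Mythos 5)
Your proposal is correct and takes essentially the same route as the paper's proof: evaluate both sides on a simple tensor $(f_n\otimes sa_n)\otimes\cdots\otimes(f_1\otimes sa_1)$ and then on a functional $\chi$, note that $|M(f_i\otimes sa_i)|=|sa_i|$ (as $f_i$ sits in degree $0$), so the bracket's Koszul sign coincides with the one occurring in the expansion of $d_n(\chi)$ and cancels against it upon recombining to $-\chi\bigl(b_n(sa_n\otimes\cdots\otimes sa_1)\bigr)\,f_n\circ\cdots\circ f_1$, with the remaining claims following from the bijectivity of $M$ exactly as the paper does. Two cosmetic slips, neither a gap: the relevant degree equality is $|sF_i|=|sa_i|$ rather than $|\chi_{(i)}|=|sF_i|$, and the recombination identity should read $\sum_{(\chi)}(-1)^{\varepsilon(\overleftarrow{\chi},\overleftarrow{sa})}\chi_{(n)}(sa_n)\cdots\chi_{(1)}(sa_1)=\chi\bigl(b_n(sa_n\otimes\cdots\otimes sa_1)\bigr)$, i.e.\ with the Koszul sign inside the sum, which is precisely the sign you correctly cancel against the bracket's sign in your final paragraph.
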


\begin{proof}
Fix $X_1,\dots,X_n\in \add \mathcal{A}$ and morphisms $f_i\otimes sa_i\in (\add s\mathcal{A})(X_{i+1},X_{i})$. We will apply $Mb_n^a$ and $b_n^cM^{\otimes n}$ to $(f_n\otimes sa_n)\otimes \dots\otimes (f_1\otimes sa_1)$ and compare the results, which are elements of $\Hom_{\mathbbm{k}}(\mathbb{D}s\mathcal{A},\Hom_{\mathbbm{k}}(X_n,X_1))$. Hence we immediately apply them to some $\chi\in \mathbb{D}s\mathcal{A}$. For $Mb_n^a$ we obtain:
\begin{align*}
Mb_n^a(f_n\otimes sa_n,\dots,f_1\otimes sa_1)(\chi)&=M(-f_n\dots f_1\otimes b_n(sa_n\otimes \dots\otimes sa_1))(\chi)\\
&=-\chi(b_n(sa_n\otimes \dots\otimes sa_1))f_n\cdots f_1
\end{align*}
To apply $b_n^cM^{\otimes n}$ we will use Sweedler notation, i.e. we write $d_n(\chi)=\sum \chi_{(n)}\otimes \dots\otimes \chi_{(1)}$. The entries of $\overleftarrow{M(f\otimes sa)}$ and $\overleftarrow{sa}$ have the same degrees. Thus:\footnotesize
\begin{align*}
b_n^cM^{\otimes n}(f_n\otimes sa_n,\dots,f_1\otimes sa_1)(\chi)&=b_n^c(M(f_n\otimes sa_n),\dots,M(f_1\otimes sa_1))(\chi)\\
&=v_n\circ [M(f_n\otimes sa_n)\otimes \dots\otimes M(f_1\otimes sa_1)](\sum \chi_{(n)}\otimes \dots\otimes \chi_{(1)})\\
&=v_n(\sum_{(\chi)}(-1)^{\varepsilon(\overleftarrow{\chi},\overleftarrow{sa})+1}M(f_n\otimes sa_n)(\chi_{(n)})\otimes \dots\otimes M(f_1\otimes sa_1)(\chi_{(1)}))\\
&=\sum_{(\chi)}(-1)^{\varepsilon(\overleftarrow{\chi},\overleftarrow{sa})+1}v_n(\chi_{(n)}(sa_n)f_n\otimes\dots\otimes \chi_{(1)}(sa_1)f_1)\\
&=\sum_{(\chi)}(-1)^{\varepsilon(\overleftarrow{\chi},\overleftarrow{sa})+1} \chi_{(n)}(sa_n)\cdots \chi_{(1)}(sa_1)f_n \dots f_1\\
&=-d_n(\chi)(sa_n,\dots,sa_1)f_n \cdots f_1\\
&=-\chi(b_n(sa_n\otimes \dots\otimes sa_1))f_n\dots f_1
\end{align*}\normalsize
The other statements follow from the bijectivity of $M$.
\end{proof}

\subsection{Representations of $\mathbbm{k}$-quivers}

 The following lemma is the obvious $\mathbbm{k}$-analogue of the correspondence between representations of quivers and modules over the corresponding  path algebra (viewed as a $\mathbbm{k}$-category). Via this lemma we will later regard a pretwisted module $(X_0,\delta)$ as a module for the path algebra of the quiver $Q^0=(\mathbb{D}s\mathcal{A})^0$ by first applying $M$ as defined in the previous subsection to get $(X_0,X_1:=M(s\delta))$ and then using this as the input of the following lemma:

\begin{lem}\label{representationsmodules}
Let $Q$ be a $\mathbbm{k}$-quiver. Let $\mathbb{L}[Q]$ be the tensor category. Then there is a one-to-one correspondence between modules over $\mathbb{L}[Q]$ and pairs consisting of a functor $X_0:\mathbb{L}\to \mathbbm{k}-\Mod$ (the restriction of the module to $\mathbb{L}$) and an $\mathbb{L}$-bimodule morphism $X_1:Q\to \Hom_{\mathbbm{k}}(X_0,X_0)$ (the restriction to $Q$, the $\mathbb{L}-\mathbb{L}$-bimodule structure of $\Hom_{\mathbbm{k}}(X_0,X_0)$ is given by $(\mathtt{i},\mathtt{j})\mapsto \Hom_{\mathbbm{k}}(X_0(\mathtt{i}),X_0(\mathtt{j}))$ here):
Denote by $v_k$ the composition map given by 
\[v_k:\Hom_{\mathbbm{k}}(X_0,X_0)^k\to \Hom_{\mathbbm{k}}(X_0,X_0), f_k\otimes\cdots \otimes  f_1\mapsto f_k\circ \cdots \circ f_1.\]
 Then $X$ is given on $Q^{\otimes k}$ by
$$X_k:Q^{\otimes k}\to \Hom_{\mathbbm{k}}(X_0,X_0), X_k=v_k\circ X_1^{\otimes k}$$
via $X=\sum_{k=1}^{\infty}X_k$ on the restriction of $X$ to $\bigoplus_{k=1}^{\infty}Q^{\otimes k}$.
\end{lem}

By Gabriel's Theorem, every basic $\mathbbm{k}$-algebra $B$ is isomorphic to $\mathbb{L}[Q]/I$ (regarded as an algebra) for some finite quiver $Q$ and some admissible ideal $I$ (spanned by so-called relations). The foregoing lemma dealt with modules over $\mathbb{L}[Q]$. To include the relations fix a set of generators $\rho_1,\dots,\rho_r$ of the ideal $I$, such that $B\cong \mathbb{L}[Q]/I$. Then $X$ as described above belongs to $B-\Mod$ iff for any $i$ we have $X(\rho_i)=0$.

A representation $X\in \mathbb{L}[Q]-\Mod$ is called \emphbf{rational} if there exists $N\geq 0$ such that $X_N=0$.

\subsection{Convolutional presentation of twisted modules}

Let $Q^0=(\mathbb{D}s\mathcal{A})^0$. Then a \emphbf{pretwisted convolutional module} is a module over $\mathbb{L}[Q^0]$. As remarked in the previous subsection, modules over $\mathbb{L}[Q^0]$ are in one-to-one correspondence to pairs $(X_0,X_1)$, where $X_0:\mathbb{L}\to \mathbbm{k}-\Mod$ is a functor and $X_1:Q^0\to \Hom(X_0,X_0)$ is an $\mathbb{L}$-bimodule morphism. Using Lemma \ref{isofunctor} there is a one-to-one correspondence between those pairs and pretwisted modules (the converse map being given by mapping a pretwisted module $(X_0,\delta)$ to $(X_0,M(s\delta))$). Denote the $\mathbb{L}[Q^0]$-module corresponding to the pretwisted module $(X_0,\delta)$ by $\mathfrak{X}_\delta$.

The notion of a twisted module can be translated in a similar way to obtain the notion of a twisted convolutional module, by the following proposition. This sets up a correspondence between the objects of the category $H^0(\twmod \mathcal{A})$ and the $B$-modules for a certain algebra $B$. To get an  equivalence of categories a definition of the functor on morphisms is needed. This will be postponed to Section \ref{section8}.

\begin{prop}\label{twistedtranslation}
Let $(X_0,\delta)$ be a pretwisted module over $\mathcal{A}$, and (by slight abuse of notation, dropping the $X_0$) $\mathfrak{X}_\delta$ be the corresponding functor $\mathbb{L}[Q^0]\to \mathbbm{k}-\Mod$. Write $s\delta=\sum f_k\otimes sa_k$ with the $sa_k$ being linearly independent. Then the following hold:
\begin{enumerate}[(i)]
\item Let $\mathtt{i},\mathtt{j}\in \mathcal{A}$, $\chi\in Q^0(\mathtt{i},\mathtt{j})$, $x\in X_0(\mathtt{i})$. Then
$$\mathfrak{X}_\delta(\chi)(x)=\sum_k \chi(sa_k) f_k(x).$$
\item $\delta=0$ if and only if $\mathfrak{X}_\delta$ is a semisimple module.
\item A pretwisted module $(X_0',\delta')$ is a pretwisted submodule of $(X_0,\delta)$ iff $\mathfrak{X}_{\delta'}$ is a submodule of $\mathfrak{X}_\delta$.
\item If $(X_0',\delta')\subseteq (X_0,\delta)$, then $\mathfrak{X}_{\delta/\delta'}=\mathfrak{X}_\delta/\mathfrak{X}_{\delta'}$.
\item $\delta$ satisfies the triangularity condition iff $\mathfrak{X}_\delta$ is a rational module over $\mathbb{L}[Q^0]$.
\item $\delta$ satisfies the Maurer-Cartan equation iff $\mathfrak{X}_\delta\circ d=0$, where $d=(d_n)$ as constructed in subsection \ref{definitionofd}. In other words, iff $\mathfrak{X}_\delta$ is a $B:=\mathbb{L}[Q^0]/(\image d)$-module.
\end{enumerate}
\end{prop}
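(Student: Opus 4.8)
The plan is to prove the six items in order, since each builds on the previous one, using the explicit formula for $M$ and the isofunctor property from Lemma \ref{isofunctor} as the main bridge between the $A_\infty$-world and the module world.

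First I would establish (i), which is essentially just unwinding definitions. By construction $\mathfrak{X}_\delta$ is the $\mathbb{L}[Q^0]$-module corresponding, via Lemma \ref{representationsmodules}, to the pair $(X_0, M(s\delta))$. So on a generator $\chi \in Q^0(\mathtt{i},\mathtt{j})$ we have $\mathfrak{X}_\delta(\chi) = M(s\delta)(\chi)$, and writing $s\delta = \sum_k f_k \otimes sa_k$ and using $M(w\otimes v)(\chi) = \chi(v)w$ together with bilinearity gives $M(s\delta)(\chi)(x) = \sum_k \chi(sa_k) f_k(x)$. For (ii): if $\delta = 0$ then $M(s\delta) = 0$, so by Lemma \ref{representationsmodules} all $X_k$ with $k\geq 1$ vanish, which says exactly that $\mathfrak{X}_\delta$ factors through $\mathbb{L}$, i.e. is semisimple; conversely if $\mathfrak{X}_\delta$ is semisimple then $X_1 = M(s\delta) = 0$, and since $M$ is injective, $\delta = 0$. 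For (iii), I would use that a pretwisted submodule $(X_0',\delta')$ is precisely a choice of subspaces $X_0'(\mathtt{i})\subseteq X_0(\mathtt{i})$ stable under all $f_k$; by (i) this is the same as requiring $\mathfrak{X}_\delta(\chi)$ to map $X_0'$ into itself for all $\chi$, i.e. that $\bigoplus_\mathtt{i} X_0'(\mathtt{i})$ is an $\mathbb{L}[Q^0]$-submodule (here the Lemma \ref{representationsmodules} description of $X_k$ as $v_k \circ X_1^{\otimes k}$ shows stability under $X_1$ forces stability under all $X_k$). Item (iv) is then immediate from (iii) and the definition of the restricted maps $f_k'$, matching the quotient representation.

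For (v), the point is that the triangularity condition TM1 for $\delta$ — existence of a finite filtration by pretwisted submodules with zero differential on the quotients — translates, via (ii), (iii), (iv), into the existence of a finite filtration of $\mathfrak{X}_\delta$ by submodules with semisimple quotients; and a finite-dimensional-at-each-vertex representation of $\mathbb{L}[Q^0]$ admits such a finite filtration with semisimple subquotients if and only if it is rational, i.e. $X_N = 0$ for some $N$ (semisimple quotients are annihilated by the arrows, so the filtration length bounds how long a path can act nontrivially, and conversely $X_N = 0$ gives the radical-series filtration). I would spell out this equivalence carefully since it is where the "finiteness" hypothesis on $\mathcal{A}$ is implicitly used.

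Item (vi) is the heart of the matter and I expect it to be the main obstacle. The claim is that the Maurer--Cartan equation $\sum_{i\geq 1} b_i^a(s\delta^{\otimes i}) = 0$ holds if and only if $\mathfrak{X}_\delta \circ d = 0$, where $d = (d_n)$ and $d_n = \mathbb{D}b_n$. The strategy is to apply the isofunctor identity $M b_n^a = b_n^c M^{\otimes n}$ from Lemma \ref{isofunctor}: summing over $n$, the Maurer--Cartan element $\sum_i b_i^a(s\delta^{\otimes i})$ is sent by $M$ to $\sum_i b_i^c(M(s\delta)^{\otimes i}) = \sum_i b_i^c(X_1^{\otimes i})$, and since $M$ is a vector-space isomorphism, the Maurer--Cartan equation is equivalent to $\sum_i b_i^c(X_1^{\otimes i}) = 0$ in $\conv\mathcal{A}(X_0,X_0)$. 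Now I must identify $\sum_i b_i^c(X_1^{\otimes i})$ with $\mathfrak{X}_\delta \circ d$. Unwinding the definition of $b_n^c$ — the composite $d_n$, then $[sF_n \otimes \cdots \otimes sF_1]$ with all $F_i = X_1$, then $v_n$ — against the Lemma \ref{representationsmodules} formula $X_n = v_n \circ X_1^{\otimes n}$ and the definition $\mathfrak{X}_\delta = \sum_k X_k$, one sees that $b_n^c(X_1^{\otimes n})$ evaluated on $\chi \in Q^0$ is, up to the bookkeeping signs from the Koszul--Quillen rule, exactly $X_n(d_n(\chi)) = X_n(\mathbb{D}b_n(\chi))$; the sign $(-1)$ built into the $[sF_n\otimes\cdots\otimes sF_1]$ bracket is there precisely to cancel the sign $-$ appearing in $b_n^a$, as in the proof of Lemma \ref{isofunctor}. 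Hence $\sum_i b_i^c(X_1^{\otimes i}) = \mathfrak{X}_\delta \circ d$ as maps $Q^0 \to \Hom_\mathbbm{k}(X_0,X_0)$, and vanishing of this is by definition the statement that $\mathfrak{X}_\delta$ descends to $B = \mathbb{L}[Q^0]/(\operatorname{Im} d)$. The delicate part, and where I would be most careful, is checking that all the signs line up and that the degree-$0$ truncations (working with $Q^0$ rather than all of $\mathbb{D}s\mathcal{A}$) are compatible with $d$, i.e. that $d$ restricts appropriately — this uses that $\delta$ lives in degree $0$ so only the degree-$0$ components of the $b_n$ contribute.
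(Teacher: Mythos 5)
Your proposal is correct and follows essentially the same route as the paper's proof: unwinding $M$ for (i)--(iv), identifying triangularity with a finite filtration having semisimple subquotients (hence rationality) for (v), and for (vi) applying the isofunctor identity $Mb_n^a=b_n^cM^{\otimes n}$ together with the observation that the degree-zero element $s\delta$ produces no Koszul--Quillen signs, so that $\sum_n b_n^c(M(s\delta)^{\otimes n})=\pm\,\mathfrak{X}_\delta\circ d$. The only cosmetic difference is that where you invoke injectivity of $M$ (and implicitly the linear independence of the $sa_k$) in (ii) and (iii), the paper makes the same point by evaluating on dual characters $\chi_k$ with $\chi_k(sa_{k'})=\delta_{k,k'}$.
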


\begin{proof}
\begin{enumerate}[(i)]
\item The first statement just follows by recalling the definition of $M$.
\item Note that a module $Y:\mathbb{L}[Q^0]\to \mathbbm{k}-\Mod$ is a semisimple module iff $Y_1=0$. Write $s\delta=\sum f_k\otimes sa_k$. We will evaluate special 'characters' $\chi_k$ defined by $\chi_k(a_{k'})=\delta_{k,k'}$, the Kronecker delta. By (i) we have $\mathfrak{X}_\delta(\chi_k)=f_k$. Hence $(\mathfrak{X}_\delta)_1=0$ iff $f_k=0$  for all $k$ and hence iff $\delta=0$.
\item Write $s\delta(\mathtt{i},\mathtt{j})=\sum f_k\otimes sa_k$ and $s\delta'(\mathtt{i},\mathtt{j})=\sum f_k'\otimes sa_k$. By (i) for $x\in X'(\mathtt{i})$ (suppressing the inclusions) $\mathfrak{X}_\delta(\chi)(x)=\sum\chi(sa_k) f_k(x)$ and $\mathfrak{X}_{\delta'}(\chi)=\sum \chi(sa_k) f_k'(x)$. For one direction note that if $s\delta'\subseteq s\delta$, then $f_k=f_k'$ for $x\in X'(\mathtt{i})$, hence $\mathfrak{X}_\delta(\chi)(x)=\mathfrak{X}_{\delta'}(\chi)(x)$ for $x\in X'$ and hence $\mathfrak{X}_{\delta'}$ is a subrepresentation of $\mathfrak{X}_\delta$. For the reverse direction note that if $\mathfrak{X}_{\delta'}$ is a subrepresentation of $\mathfrak{X}_{\delta}$, then evaluating them on $\chi_{k}$ yields $f_k=f_k'$ on $X'$. Hence $(X_0',s\delta')$ is a pretwisted submodule of $(X_0,s\delta)$.
\item This is proved analogously.
\item The preceding item shows that the triangularity condition is equivalent to the existence of a composition series. Hence to rationality.
\item For the last claim recall that $[sF_n\otimes \cdots\otimes sF_1]$ was defined by $[sF_n\otimes \cdots\otimes sF_1](\chi_n\otimes \cdots\otimes \chi_1)=(-1)^{\varepsilon(\overleftarrow{\chi},\overleftarrow{sF})+1}sF_n(\chi_n)\otimes \cdots\otimes sF_1(\chi_1)$. Note that because $s\delta\in (\mathbb{D}s\mathcal{A})^0$ we have that $\varepsilon(\overleftarrow{\chi},\overleftarrow{M(f\otimes sa)})=0$. Hence we can replace $[sF_n\otimes\dots\otimes sF_1]$ in the definition of $b_n^c$ by $-(sF_n\otimes\dots\otimes  sF_1)$. Furthermore, since $M$ is an isomorphism,  $\sum_n b_n^a((s\delta)^{\otimes n})=0$ iff
\begin{align*}
0&=\sum_{n=1}^\infty Mb_n^a((s\delta)^{\otimes n})=\sum_n b_n^c M^{\otimes n}((s\delta)^{\otimes n})\\
&=-\sum_n v_n((M(s\delta))^{\otimes n})d_n=-\sum_n (\mathfrak{X}_\delta)_nd_n=-\mathfrak{X}_\delta d
\end{align*}
by Lemma \ref{isofunctor}, definition of $b_n^c$ and the previous subsection.
\end{enumerate}
\end{proof}

Hence we define a \emphbf{twisted convolutional module} to be a rational module over $B$, where $B$ is as defined in the foregoing proposition.

Thus there is a possibility to present the objects of the category of filtered modules as the objects of a category of representations of a quiver with relations. Unfortunately the morphisms cannot just be module homomorphisms since the category of filtered modules is usually not abelian. The next section introduces the notion of a box which will enable us to take care also of the morphisms in this category.

\section{Boxes and their representations}\label{section7}

\subsection{Definition of box representations}

In this section we recall the notion of a box, formerly called bocs. "The term bocs is an approximate acronym for \underline{b}imodule \underline{o}ver \underline{c}ategory with \underline{c}oalgebra \underline{s}tructure" (cf. \cite[2.3 Remarks]{Bur05}). Here we follow the convention of replacing bocs by its existing homonym box as proposed in \cite{DO00}. In our situation the underlying category of the box will be the path category of a $\mathbbm{k}$-quiver with relations, so we can instead regard it as an algebra. The category $\mathbb{L}[Q]/I$ which is isomorphic (as an algebra) to a basic algebra $B$ is sometimes called the spectroid associated to $B$.  For a general introduction to boxes in the context of Drozd's tame and wild dichotomy theorem, see e.g. \cite{D80, CB88, D01}. The theory of left and right algebras we are using is contained in \cite{BB91} or in the nice unpublished manuscript \cite{Bur05}. For a more recent general introduction in a slightly different language, see e.g. \cite{BSZ09}.

\begin{defn}
A \emphbf{box}\footnote{In this case $W$ is also called a $B$-coring by some authors, see e.g. \cite{Brz13}.}\index{box $\mathfrak{B}=(B,W,\mu,\varepsilon)$} $\mathfrak{B}:=(B,W,\mu,\varepsilon)$ or simply $\mathfrak{B}=(B,W)$ consists of a category $B$, a $B$-$B$-bimodule $W$, a coassociative $B$-bimodule comultiplication $\mu:W\to W\otimes_B W$ and a corresponding $B$-bilinear counit $\varepsilon:W\to B$.
A box not necessarily having a counit will be called a \emphbf{prebox}.
\end{defn}

Note that in this definition $W$ is not assumed to have any degree, so especially the Koszul-Quillen sign rule does not apply.

For a box $\mathfrak{B}$ we can define its category of representations. Unlike most categories of representations however, this category will in general not be abelian; this is an advantage for our purposes.

\begin{defn}\index{box representations}
The category $\mathfrak{B}-\Mod$ (respectively $\mathfrak{B}-\modu$) of \emphbf{representations} (respectively \emphbf{locally finite dimensional representations}) of a box $\mathfrak{B}$ is defined as follows:
\begin{enumerate}[(i)]
\item Objects of $\mathfrak{B}-\Mod$ (respectively $\mathfrak{B}-\modu$) are $B$-modules (respectively locally finite dimensional $B$-modules).
\item Morphisms between box representations $X,Y\in \mathfrak{B}-\Mod$ are given by
$$\Hom_{\mathfrak{B}}(X,Y)=\Hom_{B\otimes B^{op}}(W,\Hom_{\mathbbm{k}}(X,Y)),$$
and for morphisms $f\in \Hom_{\mathfrak{B}}(X,Y)$ and $g\in \Hom_{\mathfrak{B}}(Y,Z)$ their composition $gf$ in $\mathfrak{B}-\Mod$ is the morphism of box representations obtained by composing the following $B$-$B$-bimodule homomorphisms:
$$W\stackrel{\mu}{\rightarrow} W\otimes_B W\stackrel{g\otimes f}{\rightarrow}\Hom_\mathbbm{k}(Y,Z)\otimes_B\Hom_{\mathbbm{k}}(X,Y)\stackrel{v_2}{\rightarrow}\Hom_{\mathbbm{k}}(X,Z).$$
\end{enumerate}
\end{defn}

(Here, $v_2$ is the composition map defined in 
\ref{subsectionconvolutioncategory}.)

The associativity of the composition in $\mathfrak{B}-\Mod$ follows from the coassociativity of $\mu$. The identity morphism in $\Hom_{\mathfrak{B}}(X,X)$ can be defined via the following composition of bimodule homomorphisms (because of the counit property):
$$W\stackrel{\varepsilon}{\rightarrow} B\stackrel{\delta_X}{\rightarrow} \Hom_{\mathbbm{k}}(X,X),$$
where $\delta_X$ is the map which is uniquely determined by $\delta_X(\mathbbm{1}_\mathtt{i})=\mathbbm{1}_{X(\mathtt{i})}$ (via the commutativity of the diagrams for a natural transformation).

This is a slightly altered definition of morphism compared to the traditional one (see e.g. \cite[p. 93]{BB91}). There, a morphism $f:X\to Y$ is given by a homomorphism of $B$-modules $f:W\otimes_B X\to Y$. The composition of two such morphisms of box representations $f$ and $g$ is then defined by composing the following $B$-module homomorphisms:
$$gf:W\otimes_B X\stackrel{\mu\otimes \mathbbm{1}_X}{\rightarrow} W\otimes_B W\otimes_B X\stackrel{\mathbbm{1}_W\otimes f}{\rightarrow} W\otimes_B Y\stackrel{g}{\rightarrow} Z.$$
A standard adjunction gives the canonical isomorphism
\[\Hom_{B\otimes B^{op}}(W,\Hom_{\mathbbm{k}}(X,Y))\cong \Hom_B(W\otimes_B X,Y),\tag{$\diamond$}\label{eq:adjunctionformula}\]
\label{adjunction}and that the two definitions of composition agree via this isomorphism.

\subsection{Projective bimodules and boxes with projective kernel}

Let $B$ be a small category with set of objects $S$ (i.e. a \emphbf{category over $S$}). Then a $B$-bimodule $V$ is called a \emphbf{projective $B$-bimodule} if it is isomorphic to a direct sum of the representable functors $B(\mathtt{i},-)\otimes_{\mathbbm{k}}B(-,\mathtt{j})$. The images $\varphi_k$ of $\mathbbm{1}_{\mathtt{i}}\otimes \mathbbm{1}_{\mathtt{j}}$ in $V$ are called \emphbf{generators} of $V$. In this case we also write $V=\oplus B\varphi_k B$. 

The Yoneda lemma implies the following universal property of projective $B$-bimodules which essentially says that all projective objects in the category of $B$-bimodules are direct summands of projective $B$-bimodules.

\begin{lem}
Let $\varphi_k: \mathtt{i}_k\to \mathtt{j}_k$ be a system of generators of a projective bimodule $\bigoplus B\varphi_k B$. Then for every bimodule $M$ and any set of elements $m_k:\mathtt{i}_k\to \mathtt{j}_k$ in $M$ there exists a unique bimodule homomorphism $f:\bigoplus B\varphi_k B\to M$ with $f(\varphi_k)=m_k$.
\end{lem}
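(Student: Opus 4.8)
The plan is to derive this from the Yoneda lemma, exactly as the statement before the lemma suggests. First I would reduce to the case of a single representable summand: since $\bigoplus B\varphi_k B$ is a direct sum, a bimodule homomorphism out of it is uniquely determined by its restrictions to the summands $B\varphi_k B \cong B(\mathtt{i}_k,-)\otimes_\mathbbm{k} B(-,\mathtt{j}_k)$, and conversely any family of homomorphisms on the summands glues to one on the direct sum by the universal property of $\bigoplus$. So it suffices to show: for a fixed representable bimodule $B(\mathtt{i},-)\otimes_\mathbbm{k} B(-,\mathtt{j})$ and any bimodule $M$, the bimodule homomorphisms $f\colon B(\mathtt{i},-)\otimes_\mathbbm{k} B(-,\mathtt{j})\to M$ are in natural bijection with the elements $m\in M(\mathtt{i},\mathtt{j})$, the bijection sending $f$ to $f(\mathbbm{1}_\mathtt{i}\otimes\mathbbm{1}_\mathtt{j})$.

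Next I would spell out that bijection. Given $m\in M(\mathtt{i},\mathtt{j})$, define $f$ on a simple tensor $g\otimes h$ with $g\in B(\mathtt{i},\mathtt{k})$ and $h\in B(\mathtt{l},\mathtt{j})$ by $f(g\otimes h) := g\cdot m\cdot h \in M(\mathtt{l},\mathtt{k})$, using the left and right $B$-actions on $M$; this is $\mathbbm{k}$-bilinear in $g$ and $h$, hence well defined on the tensor product, and one checks directly from associativity of the actions that it is a bimodule homomorphism, and clearly $f(\mathbbm{1}_\mathtt{i}\otimes\mathbbm{1}_\mathtt{j})=\mathbbm{1}_\mathtt{i}\cdot m\cdot\mathbbm{1}_\mathtt{j}=m$. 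For injectivity and surjectivity of $f\mapsto f(\mathbbm{1}_\mathtt{i}\otimes\mathbbm{1}_\mathtt{j})$: any homomorphism $f$ must satisfy $f(g\otimes h)=f(g\cdot(\mathbbm{1}_\mathtt{i}\otimes\mathbbm{1}_\mathtt{j})\cdot h)=g\cdot f(\mathbbm{1}_\mathtt{i}\otimes\mathbbm{1}_\mathtt{j})\cdot h$ because $g\otimes h = g\cdot(\mathbbm{1}_\mathtt{i}\otimes\mathbbm{1}_\mathtt{j})\cdot h$ in the representable bimodule, so $f$ is determined by its value on the generator (injectivity), and that value can be prescribed arbitrarily by the construction above (surjectivity). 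Assembling over all $k$ gives the unique $f$ with $f(\varphi_k)=m_k$, where $m_k\in M(\mathtt{i}_k,\mathtt{j}_k)$ is the prescribed element (the statement's ``$m_k\colon\mathtt{i}_k\to\mathtt{j}_k$ in $M$'' meaning precisely $m_k\in M(\mathtt{i}_k,\mathtt{j}_k)$).

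There is essentially no obstacle here; the lemma is a bookkeeping packaging of the Yoneda lemma for functors on $B\otimes B^{op}$, and the only points requiring a moment's care are the well-definedness of $f$ on the tensor product $\otimes_\mathbbm{k}$ (bilinearity, which is immediate) and the identity $g\otimes h = g\cdot(\mathbbm{1}_\mathtt{i}\otimes\mathbbm{1}_\mathtt{j})\cdot h$ in $B(\mathtt{i},-)\otimes_\mathbbm{k} B(-,\mathtt{j})$, which holds because $g=g\cdot\mathbbm{1}_\mathtt{i}$ and $h=\mathbbm{1}_\mathtt{j}\cdot h$ by unitality in the category $B$. If one wishes to avoid coordinates entirely, the whole argument can be phrased as: $\Hom_{B\otimes B^{op}}\!\big(B(\mathtt{i},-)\otimes_\mathbbm{k} B(-,\mathtt{j}),\,M\big)\cong M(\mathtt{i},\mathtt{j})$ naturally in $M$, which is the Yoneda lemma for the $B$-$B$-bimodule $M$ evaluated at the object $(\mathtt{i},\mathtt{j})$ of $B\otimes B^{op}$, and then pass to direct sums over $k$.
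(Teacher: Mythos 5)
Your proof is correct and follows exactly the route the paper intends: the paper states this lemma as a direct consequence of the Yoneda lemma (giving no further argument), and your reduction to a single representable summand $B(\mathtt{i},-)\otimes_{\mathbbm{k}}B(-,\mathtt{j})$ together with the bijection $f\mapsto f(\mathbbm{1}_\mathtt{i}\otimes\mathbbm{1}_\mathtt{j})$ is precisely that Yoneda argument spelled out, including the only points needing care (well-definedness on $\otimes_{\mathbbm{k}}$ and $g\otimes h=g\cdot(\mathbbm{1}_\mathtt{i}\otimes\mathbbm{1}_\mathtt{j})\cdot h$). No gaps; nothing further is needed.
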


In our setting $B$ will be a finite dimensional basic algebra, regarded as a category via $B\cong \mathbb{L}[Q]/I$. In this case it is well-known that the projective bimodules are exactly the projective objects in the category of bimodules.

An important property of boxes for applying the theory of Burt and Butler in Section \ref{section9} is the following:

\begin{defn}
A box $\mathfrak{B}=(B,W,\mu,\varepsilon)$, such that $B$ has finitely many objects and $\varepsilon$ is surjective, is said to have a \emphbf{projective kernel} if $\overline{W}:=\ker\varepsilon$ is a finitely generated projective bimodule.
\end{defn}

For more information on projective bimodules, see e.g. \cite{CB88}.

\subsection{Differential graded categories and boxes}

We explain here the classical transition from (certain) differential graded categories to boxes. The boxes in Section \ref{section8} will arise in this way.

\begin{lem}\label{dgcprebox}
Let $B$ be a category over $S$ and let $U_1$ be a $B$-bimodule. Assume that the tensor category $U:=\bigoplus_{i=0}^{\infty} U_1^{\otimes i}$, where $U^{\otimes 0}_1=B$, is endowed with a grading, such that $|B|=0$ and $|U_1|=1$. Suppose $U$ is equipped with a differential $d$ (a derivation of degree $1$ that squares to $0$). Denote by $(d(B))$ the $B$-bimodule generated by $d(B)$, set $W:=U_1/(d(B))$, and denote the canonical projection $U_1\to W$ by $\pi$. Then there is a prebox $(B,W,\mu)$, where $\mu$ is induced by $d_1:U_1\to U_1\otimes U_1$, i.e.  $(\pi\otimes_B\pi)d_1=\mu\pi$.
\end{lem}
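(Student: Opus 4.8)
\textbf{Proof plan for Lemma \ref{dgcprebox}.}

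The plan is to produce the comultiplication $\mu\colon W\to W\otimes_B W$ directly from $d_1$ and then to verify coassociativity. First I would observe that $d$ is a derivation of degree $1$ on the tensor category $U=\bigoplus_{i\geq 0}U_1^{\otimes i}$, so it is determined by its restrictions $d_0\colon B\to U_1$ and $d_1\colon U_1\to U_1\otimes_B U_1 \oplus B$; since $|B|=0$ and $|U_1|=1$, the degree-$2$ part of $U$ in which $d_1$ lands is exactly $U_1\otimes_B U_1$, so $d_1\colon U_1\to U_1\otimes_B U_1$ is a $B$-bimodule map. I then need to check that $d_1$ descends along the projection $\pi\colon U_1\to W=U_1/(d(B))$ to a map $\mu\colon W\to W\otimes_B W$ satisfying $(\pi\otimes_B\pi)d_1=\mu\pi$. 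For this it suffices to show $(\pi\otimes_B\pi)\bigl(d_1((d(B)))\bigr)=0$, i.e.\ that the sub-bimodule $(d(B))\subseteq U_1$ is carried by $d_1$ into $(d(B))\otimes_B U_1 + U_1\otimes_B (d(B))$. Since $(d(B))$ is the $B$-bimodule generated by the elements $d_0(b)$ for $b\in B$, and $d_1$ is a bimodule map, it is enough to check this on the generators $d_0(b)$ themselves; and there $d_1(d_0(b)) = (d^2)(b)_{\text{degree }2} = 0$ because $d^2=0$. (One must be slightly careful that the degree-$2$ component of $d^2(b)$ is precisely $(d_1\circ d_0)(b)$, which follows from $d$ being a graded derivation and the grading being concentrated in non-negative degrees, so no cancellation across degrees occurs.) Hence $d_1(d_0(b))=0$ in $U_1\otimes_B U_1$, a fortiori $(\pi\otimes_B\pi)d_1(d_0(b))=0$, and by bimodule-linearity the same holds on all of $(d(B))$, so $\mu$ is well defined.

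Next I would establish coassociativity of $\mu$, i.e.\ $(\mu\otimes_B\mathbbm{1}_W)\mu=(\mathbbm{1}_W\otimes_B\mu)\mu$ as maps $W\to W\otimes_B W\otimes_B W$. The point is that the analogous identity holds for $d_1$ upstairs: applying $d^2=0$ in degree $3$ and using that $d$ is a derivation, the degree-$3$ component of $d^2$ on $U_1$ is $(d_1\otimes\mathbbm{1}_{U_1})d_1 + (\mathbbm{1}_{U_1}\otimes d_1)d_1$ up to the Koszul sign coming from $d$ having degree $1$ — but here $d_1$ lands only in the part of $U$ with no $B$-tensor-factors, and one checks that the two terms coincide with the right sign, yielding $(d_1\otimes\mathbbm{1})d_1=\pm(\mathbbm{1}\otimes d_1)d_1$ on $U_1$; composing with $\pi^{\otimes 3}$ and using the defining relation $(\pi\otimes\pi)d_1=\mu\pi$ twice then transports this identity to the asserted coassociativity of $\mu$. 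The sign bookkeeping is exactly the standard one for the bar/cobar dictionary, and the projections kill any ambiguity introduced by $(d(B))$.

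The main obstacle I anticipate is not any deep difficulty but the careful separation of the derivation $d$ into its graded components and the sign bookkeeping: one has to argue that $d^2=0$, which a priori is an identity of maps $U\to U$ landing in all degrees simultaneously, forces the two \emph{individual} identities $d_1 d_0 = 0$ and $(d_1\otimes\mathbbm{1})d_1 = \pm(\mathbbm{1}\otimes d_1)d_1$ on $U_1$, which requires knowing that the relevant graded pieces do not interact. This is where the hypotheses $|B|=0$, $|U_1|=1$ and the non-negativity of the grading are used. Once that decomposition is in place, everything else is a diagram chase: well-definedness of $\mu$ uses $d^2=0$ in degree $2$, coassociativity uses it in degree $3$, and $B$-bilinearity of $\mu$ is inherited from that of $d_1$. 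Note that no counit is produced here, which is consistent with the statement only claiming a prebox; the counit will be supplied later by the additional structure of the differential graded categories actually arising in Section \ref{section8}.
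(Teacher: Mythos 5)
Your overall strategy (descend $d_1$ through $\pi$, get well-definedness from $d^2=0$ in degree $2$ and coassociativity from $d^2=0$ in degree $3$) is the same as the paper's, and your coassociativity argument, including the Koszul-sign point that $(d_1\otimes 1+1\otimes d_1)d_1=d^2|_{U_1}$ becomes the unsigned coassociativity identity after applying $\pi^{\otimes 3}$, is essentially the paper's. However, there is a genuine flaw in your well-definedness step: $d_1=d|_{U_1}\colon U_1\to U_1\otimes_B U_1$ is \emph{not} a $B$-bimodule map. Since $d$ is a derivation, for $b_1,b_2\in B$ and $u\in U_1$ one has
\[
d(b_1ub_2)=d(b_1)\otimes ub_2+b_1\,d(u)\,b_2-b_1u\otimes d(b_2),
\]
and the outer terms do not vanish unless $d|_B=0$ --- which is precisely the degenerate case where $(d(B))=0$ and there is nothing to check. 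The degree argument you give only locates where $d_1$ lands; it does not give bilinearity. So your reduction ``it suffices to check on the generators $d_0(b)$ because $d_1$ is a bimodule map'' does not stand as written, and the same false claim is what you invoke at the end for the $B$-bilinearity of $\mu$, which is part of the prebox structure and must be proved, not inherited.

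The repair is exactly the computation the paper performs: the correction terms $d(b_1)\otimes ub_2$ and $b_1u\otimes d(b_2)$ lie in $(d(B))\otimes_B U_1+U_1\otimes_B(d(B))$, hence are killed by $\pi\otimes_B\pi$. Therefore $\pi^{\otimes 2}d_1$ \emph{is} a $B$-bimodule map, which gives the bilinearity of $\mu$; and applying this with $u=d(b)$, together with $d_1(d(b))=d^2(b)=0$, yields $\pi^{\otimes 2}d_1(b_1\,d(b)\,b_2)=b_1\,\pi^{\otimes 2}(d^2(b))\,b_2=0$, i.e.\ well-definedness. In other words, the whole point of quotienting by $(d(B))$ is that $d_1$ only becomes a bimodule (and coassociative-compatible) map after composing with the projection; your proposal treats $d_1$ as if it already were such a map on $U_1$. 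Your remaining bookkeeping (that $d^2(b)\in U_1^{\otimes 2}$ and $d^2(u)\in U_1^{\otimes 3}$ with no cross-degree interaction, thanks to $|B|=0$, $|U_1|=1$) is correct and matches the paper.
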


\begin{proof}
We have $\pi^{\otimes 2}(d_1(b_1ub_2))=\pi^{\otimes 2}(d(b_1)\otimes ub_2+b_1d(u)b_2+b_1u\otimes d(b_2))=b_1\pi^{\otimes 2}(d_1(u))b_2$ by definition of $\pi$. Hence $\mu$ is well-defined and a bimodule homomorphism. To prove coassociativity note that
\begin{align*}
(\mu\otimes 1-1\otimes \mu)\mu\pi&=(\mu\otimes 1-1\otimes \mu)\pi^{\otimes 2} d_1\\
&=(\mu\pi\otimes \pi-\pi\otimes \mu\pi)d_1\\
&=\pi^{\otimes 3}((d_1\otimes 1+1\otimes d_1)d_1)\\
&=\pi^{\otimes 3} d^2|_{U_1}=0.
\end{align*}
Note that the Koszul-Quillen sign rule forces the sign change from $-$ to $+$ as $U_1$ and $d$ are of degree $1$ and hence $(1\otimes d_1)(u\otimes u')=-u\otimes d_1(u')$.
\end{proof}

Now we provide a box (without pre) version of this lemma:

\begin{lem}\label{dgcbox}
Let $B$ be a category over $S$ and let $U_1$ be a $B$-bimodule. Furthermore, let $U_1=U_{\Omega}\oplus \overline{U}$ (as $B$-bimodules), such that $U_\Omega$ is a projective bimodule $U_{\Omega}=\bigoplus_{\mathtt{i}\in \mathbb{L}} B\omega_\mathtt{i} B$, where the $\omega_\mathtt{i}$ are generators which are images of $\mathbbm{1}_{\mathtt{i}}\otimes \mathbbm{1}_{\mathtt{i}}$. Suppose that the following conditions hold:
\begin{enumerate}[{(d}1{)}]
\item $d(\omega_\mathtt{i})=\omega_\mathtt{i}\otimes \omega_\mathtt{i}$,
\item for all $b\in B(\mathtt{i},\mathtt{j})$ we have $d(b)=\omega_\mathtt{j}b-b\omega_\mathtt{i}+\partial b$ for some $\partial b\in \overline{U}$,
\item for all $u\in \overline{U}(\mathtt{i},\mathtt{j})$ we have $d(u)=\omega_\mathtt{j}u+u\omega_\mathtt{i}+\partial u$ for some $\partial u\in \overline{U}\otimes \overline{U}$.
\end{enumerate}
Let $W,\mu$ be as in the preceding lemma. Then there is a $B$-bimodule map $\tilde{\varepsilon}: U_1\to B$ given by $\tilde{\varepsilon}(\omega_\mathtt{i})=\mathbbm{1}_\mathtt{i}$ and $\tilde{\varepsilon}(\overline{U})=0$. It induces a unique $\varepsilon: W\to B$ with  $\tilde{\varepsilon}=\varepsilon\pi$ such that $(B,W,\mu,\varepsilon)$ is a box.
\end{lem}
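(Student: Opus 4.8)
The plan is to follow the standard recipe for producing the counit of a box out of a differential graded category, in parallel with Lemma~\ref{dgcprebox}. First I would check that the bimodule map $\tilde\varepsilon$ exists and that it descends along $\pi$ to a map $\varepsilon\colon W\to B$; then I would verify that $\varepsilon$ is a counit for the comultiplication $\mu$ already supplied by Lemma~\ref{dgcprebox}, testing the counit identities on a bimodule generating set of $W$.

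For the first step, the universal property of projective bimodules recalled above applied to $U_\Omega=\bigoplus_\mathtt{i} B\omega_\mathtt{i} B$ yields a unique $B$-bimodule map $U_\Omega\to B$ with $\omega_\mathtt{i}\mapsto\mathbbm{1}_\mathtt{i}$; together with the zero map on the complement $\overline U$ (recall $U_1=U_\Omega\oplus\overline U$ as bimodules) this defines the $B$-bimodule map $\tilde\varepsilon\colon U_1\to B$. To see that $\tilde\varepsilon$ annihilates the sub-bimodule $(d(B))$ it suffices, since $\tilde\varepsilon$ is bimodular, to compute $\tilde\varepsilon(d(b))$ for $b\in B(\mathtt{i},\mathtt{j})$: by (d2) this equals $\mathbbm{1}_\mathtt{j}b-b\mathbbm{1}_\mathtt{i}+\tilde\varepsilon(\partial b)=0$, using $\partial b\in\overline U\subseteq\ker\tilde\varepsilon$. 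Hence $\tilde\varepsilon$ factors uniquely through $\pi\colon U_1\to W$ as $\tilde\varepsilon=\varepsilon\pi$ with $\varepsilon$ a $B$-bimodule homomorphism.

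It then remains to show that $\varepsilon$ is a counit for $\mu$ (coassociativity and bimodularity of $\mu$ are part of Lemma~\ref{dgcprebox}). Both $(\varepsilon\otimes_B\mathbbm{1}_W)\mu$ and $(\mathbbm{1}_W\otimes_B\varepsilon)\mu$ are $B$-bimodule endomorphisms of $W$ after the canonical identifications $B\otimes_B W\cong W\cong W\otimes_B B$, so it is enough to check that each equals $\mathbbm{1}_W$ on bimodule generators of $W$; since $U_1=U_\Omega\oplus\overline U$, I would take these to be the classes $\pi(\omega_\mathtt{i})$ together with the elements of $\pi(\overline U)$. On $\pi(\omega_\mathtt{i})$: condition (d1), together with the fact that on the degree-$1$ piece $U_1$ the map $d_1$ coincides with $d$ (the image of an element of $U_1$ under $d$ lies in $U_1^{\otimes 2}$), gives $\mu(\pi(\omega_\mathtt{i}))=\pi(\omega_\mathtt{i})\otimes_B\pi(\omega_\mathtt{i})$, and since $\varepsilon(\pi(\omega_\mathtt{i}))=\mathbbm{1}_\mathtt{i}$ the two identities follow at once. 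On $\pi(u)$ for $u\in\overline U(\mathtt{i},\mathtt{j})$: condition (d3) gives $\mu(\pi(u))=\pi(\omega_\mathtt{j})\otimes_B\pi(u)+\pi(u)\otimes_B\pi(\omega_\mathtt{i})+\pi^{\otimes 2}(\partial u)$, and applying $\varepsilon\otimes_B\mathbbm{1}_W$ kills the second summand because $\varepsilon\pi|_{\overline U}=0$ and the third because $\partial u\in\overline U\otimes_B\overline U$, while the first reduces to $\pi(u)$; symmetrically for $\mathbbm{1}_W\otimes_B\varepsilon$. This establishes the counit axioms, and $(B,W,\mu,\varepsilon)$ is a box.

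I expect no genuine obstacle here: the content is bookkeeping, and the points deserving attention are (a) keeping the identifications $B\otimes_B W\cong W\cong W\otimes_B B$ straight; (b) using that $d_1$ agrees with $d$ on $U_1$, which is precisely what allows (d1)--(d3) to be read off directly inside $W$ and $W\otimes_B W$; (c) the legitimacy of testing the counit axioms only on a bimodule generating set, which is valid because every map involved is a $B$-bimodule homomorphism; and (d) confirming that the Koszul--Quillen signs responsible for the sign flip in Lemma~\ref{dgcprebox} do not intervene, since none of these computations transpose two odd-degree elements past one another.
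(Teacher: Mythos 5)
Your argument is correct and takes essentially the same route as the paper's proof: well-definedness of $\varepsilon$ follows from $\tilde{\varepsilon}(d(b))=\mathbbm{1}_\mathtt{j}b-b\mathbbm{1}_\mathtt{i}+\tilde{\varepsilon}(\partial b)=0$ via (d2) and bimodularity, and the counit identities are checked on $\pi(\omega_\mathtt{i})$ using (d1) and on $\pi(u)$, $u\in\overline{U}$, using (d3), with $\partial u\in\overline{U}\otimes\overline{U}$ annihilated by $\varepsilon\pi|_{\overline{U}}=0$. The only differences are cosmetic: you make explicit the factorisation through $\pi$, the reduction to bimodule generators, and both counit identities, where the paper records one side and leaves the symmetric one implicit.
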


\begin{proof}
By definition and (d2) we have $\tilde{\varepsilon} (d(B))=0$. Thus $\varepsilon$ is well-defined. To see the property of the counit we compute:
\begin{align*}
(\varepsilon\otimes 1)\mu\pi(u)&=(\varepsilon\otimes 1)\pi^{\otimes 2}d_1(u)=(\tilde{\varepsilon}\otimes \pi)d_1(u)\\
&=(\tilde{\varepsilon}\otimes \pi)(\omega_\mathtt{j}\otimes u+u\otimes \omega_\mathtt{i}+\partial u)=\mathbbm{1}_\mathtt{j}\otimes \pi(u)
\end{align*}
for $u\in \overline{U}(\mathtt{i},\mathtt{j})$ by (d3).
and
$$(\varepsilon\otimes 1)\mu\pi(\omega_\mathtt{i})=(\tilde{\varepsilon}\otimes \pi)(\omega_\mathtt{i}\otimes \omega_\mathtt{i}+r_\mathtt{i})=\mathbbm{1}_\mathtt{i}\otimes \pi(\omega_\mathtt{i})$$
by (d1).
\end{proof}

The next lemma gives a sufficient condition for the resulting box to have a projective kernel, which is important for applying the theory of Burt and Butler in Section \ref{section9}. 

\begin{lem}\label{projectivekernel}
If $\mathfrak{B}=(B,W)$ is a box obtained as in the foregoing lemma and it is additionally assumed that $\overline{U}$ is a projective bimodule, then $\overline{W}:=\ker\varepsilon$ is a projective bimodule.
\end{lem}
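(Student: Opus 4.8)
The plan is to prove the (slightly stronger) statement that $\overline{W}=\ker\varepsilon$ is isomorphic, as a $B$-bimodule, to $\overline{U}$. Since $\overline{U}$ is a projective bimodule by hypothesis and the class of projective bimodules is closed under isomorphism, this gives the claim.

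First I would reduce to a statement about $\ker\tilde\varepsilon$. As already observed in the proof of Lemma~\ref{dgcbox}, $\tilde\varepsilon(d(B))=0$, hence $(d(B))\subseteq\ker\tilde\varepsilon$, and since $\varepsilon$ is the map induced by $\tilde\varepsilon$ on $W=U_1/(d(B))$ one gets $\overline{W}=\ker\tilde\varepsilon/(d(B))$. Now use the bimodule decomposition $U_1=U_\Omega\oplus\overline{U}$: because $\tilde\varepsilon$ annihilates $\overline{U}$ and sends $\omega_\mathtt{i}$ to $\mathbbm{1}_\mathtt{i}$, we obtain $\ker\tilde\varepsilon=K_\Omega\oplus\overline{U}$, where $K_\Omega:=\ker(\tilde\varepsilon|_{U_\Omega})$. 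Under the canonical identification $U_\Omega=\bigoplus_{\mathtt{i}}B\omega_\mathtt{i}B\cong B\otimes_{\mathbb{L}}B$ (sending $\omega_\mathtt{i}$ to $\mathbbm{1}_\mathtt{i}\otimes\mathbbm{1}_\mathtt{i}$), the restriction $\tilde\varepsilon|_{U_\Omega}$ is the multiplication map $B\otimes_{\mathbb{L}}B\to B$, so $K_\Omega$ is its kernel, i.e. $\Omega^1_{B/\mathbb{L}}$. By exactness of the relative bar resolution, $K_\Omega$ is generated as a $B$-bimodule by the elements $\delta(b):=\omega_\mathtt{j}b-b\omega_\mathtt{i}$ for $b\in B(\mathtt{i},\mathtt{j})$, and it has the universal property that $B$-bimodule homomorphisms $K_\Omega\to M$ correspond naturally to $\mathbbm{k}$-linear derivations $B\to M$ vanishing on the $\mathbbm{1}_\mathtt{i}$.

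The core step is then to locate $(d(B))$ inside $K_\Omega\oplus\overline{U}$. Writing $d(b)=\delta(b)+\partial b$ for $b\in B(\mathtt{i},\mathtt{j})$ as in condition~(d2), the Leibniz rule for $d$ immediately yields $\partial(bb')=(\partial b)b'+b(\partial b')$, so $\partial\colon B\to\overline{U}$ is a $\mathbbm{k}$-linear derivation; moreover $\partial\mathbbm{1}_\mathtt{i}=d(\mathbbm{1}_\mathtt{i})=0$ (since $d(\mathbbm{1}_\mathtt{i})=d(\mathbbm{1}_\mathtt{i}^2)=2d(\mathbbm{1}_\mathtt{i})$). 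By the universal property of $K_\Omega$ there is a unique $B$-bimodule homomorphism $g\colon K_\Omega\to\overline{U}$ with $g(\delta(b))=-\partial b$, so that $d(b)=\delta(b)-g(\delta(b))$, viewed as an element of $K_\Omega\oplus\overline{U}$, for every $b$. Consider the split surjective $B$-bimodule homomorphism $\Phi\colon K_\Omega\oplus\overline{U}\to\overline{U}$, $(k,\bar u)\mapsto\bar u+g(k)$ (a section being $\bar u\mapsto(0,\bar u)$). Then $\Phi(d(b))=0$, hence $(d(B))\subseteq\ker\Phi$; conversely $\ker\Phi=\{(k,-g(k)):k\in K_\Omega\}$ is the image of the $B$-bimodule map $k\mapsto(k,-g(k))$, which is generated by the elements $(\delta(b),-g(\delta(b)))=d(b)$, so $\ker\Phi=(d(B))$. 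Therefore
\[\overline{W}=(K_\Omega\oplus\overline{U})/(d(B))=(K_\Omega\oplus\overline{U})/\ker\Phi\;\xrightarrow{\ \sim\ }\;\overline{U},\]
and $\overline{U}$ is a projective bimodule.

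I expect the one genuinely delicate point to be the identification $(d(B))=\ker\Phi$: $(d(B))$ is not literally contained in the ``$\Omega$-part'' $K_\Omega$, because of the correction terms $\partial b\in\overline{U}$ forced by (d2), so one cannot naively take a quotient of $U_\Omega$. What makes it work is that these corrections are organised by a single bimodule homomorphism $g$ --- equivalently, that $\partial$ is a derivation --- so that $(d(B))$ is the graph of $-g$ and hence a bimodule complement to $\overline{U}$ inside $\ker\tilde\varepsilon$. Everything else (the bar resolution description of $K_\Omega$, and the bookkeeping with $U_1=U_\Omega\oplus\overline{U}$ and the conditions (d1)--(d3)) is routine.
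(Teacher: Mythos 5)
Your argument is correct, but it follows a different route than the paper's own proof. The paper does not compute $\ker\varepsilon$ directly: it first observes (as you do) that $\partial$ restricted to $B$ behaves like a derivation with values in $\overline{U}$, and then invokes the standard Roiter/Brzezi\'nski construction of a box from the differential graded algebra $B[\overline{U}]=\bigoplus_{i\geq 0}\overline{U}^{\otimes i}$; that construction produces an auxiliary box $(B,W')$ with $W'=B\oplus\overline{U}$ as a left module and the twisted right action $(bx+u)*b'=bb'x-b\partial(b')+ub'$, whose counit kernel is visibly $\overline{U}$, and the paper then shows $W\cong W'$ as boxes by producing surjections in both directions together with a dimension comparison. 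Your proof replaces this detour by a self-contained identification $\overline{W}\cong\overline{U}$: you decompose $\ker\tilde\varepsilon=K_\Omega\oplus\overline{U}$ with $K_\Omega=\ker\bigl(\tilde\varepsilon|_{U_\Omega}\bigr)\cong\ker(B\otimes_{\mathbb{L}}B\to B)$, use its universal property to convert the $\mathbb{L}$-relative derivation $-\partial$ into a bimodule map $g$, and recognise $(d(B))$ as the graph of $-g$, hence a bimodule complement of $\overline{U}$ in $\ker\tilde\varepsilon$. The key computation is the same in both proofs (the Leibniz rule forcing $\partial(bb')=(\partial b)b'+b(\partial b')$, which is exactly what makes the paper's twisted right action on $W'$ work), but your version buys an explicit isomorphism $\overline{W}\cong\overline{U}$ without appealing to the literature construction and without the finiteness needed for the paper's dimension count, whereas the paper's version additionally records that $(B,W)$ agrees with the classical box attached to $B[\overline{U}]$, which is of independent use. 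One small point worth making explicit: your claim $d(\mathbbm{1}_\mathtt{i})=d(\mathbbm{1}_\mathtt{i}^2)=2d(\mathbbm{1}_\mathtt{i})$ uses that $d$ is a $\mathbbm{k}$-quiver map, so $d(\mathbbm{1}_\mathtt{i})$ lies in the $(\mathtt{i},\mathtt{i})$-component of $U_1$, on which $\mathbbm{1}_\mathtt{i}$ acts as the identity on both sides; with that remark (or simply by reading off (d2) for $b=\mathbbm{1}_\mathtt{i}$) the step is sound.
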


\begin{proof}
A direct calculation shows that the properties of $d$ in the preceding lemma imply that also $\partial$ gives $B[\overline{U}]:=\bigoplus_{i\geq 0}\overline{U}^{\otimes i}$ the structure of a differential graded algebra. Our assignment of a box to the differential graded algebra $U$ then coincides (up to signs) with the standard procedure of assigning a box to the differential graded algebra $B[\overline{U}]$ described e.g. in \cite{Roi80} or \cite{Brz13}. There the box $(B,W')$ is defined by setting $W':=B\oplus \overline{U}$ with a generator $x$ of $B$ with the left action given by the left actions on $B$ and $\overline{U}$ and the new right action defined by $(bx+u)*b':=bb'x-b\partial(b')+ub'$. Obviously there is a surjective $B$-bilinear map $W'\to W$. Thus, $\dim W\leq \dim W'$. Furthermore there is a surjective $B$-bilinear map $\pi: U_1\to W'$ with $d(B)\subset \ker(\pi)$. The induced map $W\to W'$ proves that $W\cong W'$ as $B$-bimodules and comparing the comultiplications and the counits one sees that $(B,W)$ and $(B,W')$ are ismorphic as boxes. The box $(B,W')$ obviously has a projective kernel. Thus, also $(B,W)$ has a projective kernel.
\end{proof}

\subsection{Morphisms between box representations in terms of generators}\label{boxmorphismswithgenerators}

In this subsection we are concerned with the question how to describe the morphisms in the category of modules over a box in some sense similar to representations of quivers using the universal property of projective bimodules. This makes it easier in the next section to translate the twisted modules to the box setting. 

 Let $X$, $Y$ be $\mathfrak{B}$-representations, where the box $\mathfrak{B}=(B,W)$ is built from a differential graded tensor category as in the previous subsection (including the assumption that $\overline{U}$ is a projective bimodule). Let $I:=(d(B))$ and let $Q^1$ be the $\mathbbm{k}$-quiver over $S$ generated over $\mathbbm{k}$ by the $\varphi_k$, a set of generators for $U_1$. Then
\begin{align*}
\Hom_{B\otimes B^{op}}(W,\Hom_{\mathbbm{k}}(X,Y))&=\Hom_{B\otimes B^{op}}((\bigoplus B\varphi_k B)/I,\Hom_{\mathbbm{k}}(X,Y))\\
&\cong\{\Phi\in \Hom_{B\otimes B^{op}}(\bigoplus B\varphi_k B, \Hom_{\mathbbm{k}}(X,Y))|\Phi(I)=0\}\\
&\cong\{f\in \bigoplus_{\mathtt{i},\mathtt{j}\in \mathcal{A}}\Hom_{\mathbbm{k}}(Q^1(\mathtt{i},\mathtt{j}),\Hom_{\mathbbm{k}}(X(\mathtt{i}),Y(\mathtt{j})))|\Phi_f(I)=0\},
\end{align*}
where $\Phi_f$ is given by $\Phi_f(b_1\varphi_k b_2)=b_1f(\varphi_k) b_2$ and the claimed isomorphisms are isomorphisms of vector spaces.

\section{Filtered modules as box representation}\label{section8}

Note that for the $A_\infty$-category $\mathcal{A}$ of extensions of $\Delta$ the following properties hold:

\begin{enumerate}[{(E}1{)}]
\item $\mathcal{A}^i=0$ for all $i< 0$,
\item $S$, the underlying set of $\mathbb{L}$, is finite and $\mathcal{A}(\mathtt{i},\mathtt{j})$ is finite dimensional for all $\mathtt{i}$ and $\mathtt{j}$,
\item $\mathcal{A}$ is strictly unital, and
\item $\mathcal{A}$ is augmented.
\end{enumerate}

(E1) and (E2) are immediate by definition and for (E3) and (E4), see e.g. \cite[3.5, 7.7]{K01}

Denote by $Q$ the graded $\mathbbm{k}$-quiver $\mathbb{D}s\mathcal{A}$. By (E1) we have that $Q^i=0$ for $i>1$. Let $T$ be the graded tensor category $\mathbb{L}[Q]$. The following lemma is the crucial observation for translating the category of filtered modules to the box language:

\begin{lem}\label{boxconstruction}
\begin{enumerate}[(i)]
\item $T$ is a differential graded category with respect to $d$ induced by $\mathbb{D}b$ via \cite[Lemme 1.2.1.1]{L-H03}, where $b$ is the differential on $s\mathcal{A}$, and the usual multiplication.
\item The ideal $I$ generated by all $Q^i$ with $i<0$ and all $d(Q^{-1})$ is a differential ideal with respect to the $d$ constructed  above.
\item The factor $U=T/I$ is a differential category freely generated over ${B=\mathbb{L}[Q^0]/(\mathbb{L}[Q^0]\cap I)}$ by the $\mathbbm{k}$-quiver $Q^1$. In particular, $U$ satisfies the conditions of Lemma \ref{dgcprebox} and defines a prebox $\mathfrak{B}:=(B,W,\mu)$, where $W=U_1/(d(B))$, and $\mu$ is induced by $d$.
\item For every $\mathtt{i}\in \mathbb{L}$ denote by $\omega_\mathtt{i}$ the element dual to $s\mathbbm{1}_\mathtt{i}$. Then the augmentation decomposition $Q=\mathbb{D}s\mathbb{L}_{\mathcal{A}}\oplus \mathbb{D}s \ker \eta$ satisfies the assumptions of Lemma \ref{dgcbox}. In particular the prebox $\mathfrak{B}$ defines a box.
\item This box $\mathfrak{B}$ has a projective kernel.
\end{enumerate}
\end{lem}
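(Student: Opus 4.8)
The plan is to produce the box promised in Lemma~\ref{boxconstruction} by transporting the $A_\infty$-structure $b$ on $s\mathcal{A}$ to a differential on the tensor category $T=\mathbb{L}[Q]$ and then verifying, one after the other, the hypotheses of Lemmas~\ref{dgcprebox}, \ref{dgcbox} and \ref{projectivekernel}. For part~(i) I would start from the fact that $b\colon\overline{T}s\mathcal{A}\to\overline{T}s\mathcal{A}$ is a coderivation with $b^{2}=0$, determined by its components $b_{n}\colon(s\mathcal{A})^{\otimes n}\to s\mathcal{A}$ of degree $1$. Since by (E2) all spaces $\mathcal{A}(\mathtt{i},\mathtt{j})$ are finite dimensional, $\mathbb{D}$ commutes with tensor powers, so the transposes $d_{n}:=\mathbb{D}b_{n}\colon Q\to Q^{\otimes n}$ extend by the Leibniz rule to a single derivation $d$ of degree $1$ on $T=\mathbb{L}[Q]$, and dualising the $A_\infty$-relations $A_{n}$ (equivalently $b^{2}=0$) yields $d^{2}=0$; this is exactly the content of \cite[Lemme~1.2.1.1]{L-H03}, and it is the only step that genuinely uses finite-dimensionality.

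For part~(ii) the point is that $d$ is a derivation, so $d(I)\subseteq I$ may be tested on a generating set of $I$. By (E1) the graded $\mathbbm{k}$-quiver $Q=\mathbb{D}s\mathcal{A}$ is concentrated in degrees $\leq 1$, while $d$ raises degree by $1$: any word of negative total degree must contain a factor of negative degree and hence already lies in $I$, so $d(Q^{i})\subseteq I$ automatically for $i<-1$; for $i=-1$ the image $d(Q^{-1})$ lands in degree $0$ and is thrown in among the generators of $I$ precisely for this reason, and $d(d(Q^{-1}))=0$. Together with the two-sided Leibniz rule this gives $d(I)\subseteq I$, so $d$ descends to $U=T/I$. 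For part~(iii), since $I$ contains every $Q^{i}$ with $i<0$ and $Q^{i}=0$ for $i>1$, only the arrows of $Q^{0}$ and $Q^{1}$ survive in $U$; a degree-zero word of $U$ cannot use a degree-one arrow without an accompanying (now vanishing) negative arrow to compensate, so the degree-zero part of $U$ is exactly $B=\mathbb{L}[Q^{0}]/(\mathbb{L}[Q^{0}]\cap I)$ and the degree-one part is a single $Q^{1}$-arrow with coefficients in $B$. Inspecting the generators of $I$ one checks that the only relations they impose among the $Q^{0}$- and $Q^{1}$-arrows are the defining relations of $B$ (the negative arrows are simply deleted, and the relations from $d(Q^{-1})$ reduce into $\mathbb{L}[Q^{0}]\cap I$), so $U\cong T_{B}(U_{1})$ freely with $U_{1}=B\otimes_{\mathbb{L}}Q^{1}\otimes_{\mathbb{L}}B$; thus $(U,d)$ meets the hypotheses of Lemma~\ref{dgcprebox} ($|B|=0$, $|U_{1}|=1$, $d$ a square-zero degree-one derivation) and yields the prebox $\mathfrak{B}=(B,W,\mu)$ with $W=U_{1}/(d(B))$.

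For parts~(iv) and (v) I would use the augmentation decomposition $Q=\mathbb{D}s\mathbb{L}_{\mathcal{A}}\oplus\mathbb{D}s\ker\eta$ of (E4) to put $U_{1}=Q^{1}$ into the required form: the summand $U_{\Omega}=B\otimes_{\mathbb{L}}\mathbb{D}s\mathbb{L}_{\mathcal{A}}\otimes_{\mathbb{L}}B=\bigoplus_{\mathtt{i}}B\omega_{\mathtt{i}}B$ coming from $\mathbb{D}s\mathbb{L}_{\mathcal{A}}$ is projective, with $\omega_{\mathtt{i}}$ the generator dual to $s\mathbbm{1}_{\mathtt{i}}$, and the complement $\overline{U}=B\otimes_{\mathbb{L}}(\mathbb{D}s\ker\eta)^{1}\otimes_{\mathbb{L}}B$ is likewise a projective bimodule. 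The remaining task is to verify (d1)--(d3) of Lemma~\ref{dgcbox}, and this is where strict unitality (E3) does the work: $b_{2}(s\mathbbm{1}_{\mathtt{i}}\otimes s\mathbbm{1}_{\mathtt{i}})=s\mathbbm{1}_{\mathtt{i}}$ together with the vanishing of $b_{n}$ ($n\neq 2$) on strings containing a $s\mathbbm{1}$ transposes to $d(\omega_{\mathtt{i}})=\omega_{\mathtt{i}}\otimes\omega_{\mathtt{i}}$; the identities $b_{2}(s\mathbbm{1}_{\mathtt{j}}\otimes sx)=sx$ and $b_{2}(sx\otimes s\mathbbm{1}_{\mathtt{i}})=(-1)^{|x|}sx$ give the linear terms $\omega_{\mathtt{j}}b-b\omega_{\mathtt{i}}$ in degree zero and $\omega_{\mathtt{j}}u+u\omega_{\mathtt{i}}$ in degree one (the sign discrepancy being exactly the Koszul--Quillen sign in those axioms); and the grading of $U$ --- all arrows in degrees $0$ and $1$, so $U$ is bounded below and a degree-$k$ element has exactly $k$ letters in $U_{1}$ --- forces the remaining pieces $\partial b$ and $\partial u$ to lie in $\overline{U}$ and in $\overline{U}\otimes_{B}\overline{U}$ respectively, with no $\omega$'s intruding since $b_{n}$ with a unit vanishes for $n\neq 2$. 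This verifies Lemma~\ref{dgcbox}, so $\mathfrak{B}$ is a box, and since $\overline{U}$ is a projective bimodule Lemma~\ref{projectivekernel} then gives that $\overline{W}=\ker\varepsilon$ is projective, i.e.\ $\mathfrak{B}$ has a projective kernel.

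I expect the main obstacle to be the bookkeeping in part~(iv): one has to keep simultaneous track of the shift $s$, of the Koszul--Quillen signs, and of the two gradings --- the internal $\mathcal{A}$-grading, which becomes the grading of $U$, and the tensor length --- in order to read off the precise shape of $d$ on the generators $\omega_{\mathtt{i}}$, on $B$ and on $\overline{U}$, and to confirm that no $\omega$'s appear in the higher terms $\partial b$, $\partial u$. Everything else is either a formal consequence of $d^{2}=0$ or a direct application of one of the three preparatory lemmas.
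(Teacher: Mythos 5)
Your proposal is correct and takes essentially the same route as the paper's (very terse) proof: dualise the coderivation $b$ via the bar construction and finite-dimensionality (E2) to obtain the degree-one differential $d$ on $T$, use that $d$ has degree $1$ for the differential-ideal and freeness statements, invoke strict unitality (E3) and the augmentation (E4) to verify the hypotheses of Lemma \ref{dgcbox}, and conclude the projective kernel from Lemma \ref{projectivekernel}. You merely spell out the degree and sign bookkeeping that the paper leaves implicit.
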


\begin{proof}
\begin{enumerate}[(i)]
\item This follows immediately from dualising our definition of an $A_\infty$-category via the bar construction as follows: Since $\mathbb{D}\Delta_{\overline{T}s\mathcal{A}}^{co}=\mu_{\overline{T}\mathbb{D}s\mathcal{A}}$ and $0=\mathbb{D}(b^2)=(\mathbb{D}b)^2$ we have that $\mathbb{D}b$ is a differential and hence $T$ is a differential graded category.
\item Since $d$ has degree $1$ this is by construction.
\item Since $d$ is of degree $1$, $d(I)\subseteq \mathbb{L}[Q^0]$. Hence freeness follows.
\item Since $\mathcal{A}$ is strictly unital and augmented, the assumptions of Lemma \ref{dgcbox} are satisfied.
\item This is just Lemma \ref{projectivekernel}.
\end{enumerate}
\end{proof}

Combining all the results, the following theorem establishes the description of filtered modules that we need:

\begin{thm}
Let $A$ be a quasi-hereditary algebra with set of standard modules $\Delta$. Then there exists a directed box $\mathfrak{B}=(B,W)$ such that $\mathfrak{B}-\modu \simeq \mathcal{F}(\Delta)$.
\end{thm}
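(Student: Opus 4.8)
The plan is to obtain the equivalence by concatenating the results of Sections~\ref{section4}--\ref{section8}. Fix the $A_\infty$-category $\mathcal{A}=\Ext^*_A(\Delta,\Delta)$ produced by Kadeishvili's Theorem~\ref{Kadeishvili}; it satisfies conditions (E1)--(E4). By Theorem~\ref{kellerlefevre} there is an equivalence $\mathcal{F}(\Delta)\simeq H^0(\twmod\mathcal{A})$, so it suffices to identify $H^0(\twmod\mathcal{A})$ with $\mathfrak{B}-\modu$ for the box $\mathfrak{B}$ constructed in Lemma~\ref{boxconstruction}. First I would replace $\add\mathcal{A}$ by the convolution category $\conv\mathcal{A}$: Lemma~\ref{isofunctor} says $M$ is an $A_\infty$-isofunctor $\add\mathcal{A}\to\conv\mathcal{A}$, and since the twisted multiplications $b_n^{tw}$ are built functorially out of the $b_k^a$, the map $M$ extends to an isomorphism of $A_\infty$-categories between $\twmod\mathcal{A}$ and its convolutional analogue, hence to an equivalence on the homology categories. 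From now on I would work on the convolutional side.

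For objects, Proposition~\ref{twistedtranslation} (parts (v) and (vi)) identifies the objects of $H^0(\twmod\mathcal{A})$, that is, the twisted convolutional modules, with the rational modules over $B:=\mathbb{L}[Q^0]/(\image d)$, where $Q=\mathbb{D}s\mathcal{A}$ and $d=(d_n)$ is the datum of subsection~\ref{definitionofd}. By Lemma~\ref{boxconstruction}, this very $B$ (one has $\image d=\mathbb{L}[Q^0]\cap I$) is the underlying category of the box $\mathfrak{B}=(B,W,\mu,\varepsilon)$ built from the differential graded category $T=\mathbb{L}[Q]$, and $\mathfrak{B}$ has a projective kernel. Moreover $Q^0=(\mathbb{D}s\mathcal{A})^0$ identifies with $D(\Ext^1_A(\Delta,\Delta))$, and the standardisation conditions on $\Delta$ force $\Ext^1_A(\Delta(\mathtt{i}),\Delta(\mathtt{j}))\neq 0$ only for $\mathtt{i}<\mathtt{j}$; hence the quiver $Q^0$ is acyclic, so $B$ is finite dimensional and, with the given order, quasi-hereditary with simple standard modules, i.e.\ directed. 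Thus $\mathfrak{B}$ is a directed box, every $B$-module is rational, and the objects of $\mathfrak{B}-\modu$ are precisely the finite dimensional $B$-modules, matching the objects of $H^0(\twmod\mathcal{A})$ under the above identification.

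The main point is to match morphisms and composition. Since $\mathcal{A}^i=0$ for $i<0$, the space $\conv\mathcal{A}(X,Y)^{-1}$ vanishes, so there are no $b_1^{tw}$-exact morphisms and $\Hom_{H^0(\twmod\mathcal{A})}((X,\delta_X),(Y,\delta_Y))$ equals the space of $b_1^{tw}$-cocycles in $\conv\mathcal{A}(X,Y)^0$. By the universal property of the projective bimodule $U_1$ of Lemma~\ref{boxconstruction}, spelled out in subsection~\ref{boxmorphismswithgenerators}, the space $\conv\mathcal{A}(X,Y)^0$ is canonically $\Hom_{B\otimes B^{op}}(U_1,\Hom_{\mathbbm{k}}(X,Y))$. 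I would then unwind the definition of $b_1^{tw}$ through $M$, through the duality $d_n=\mathbb{D}b_n$, and through the definition of $\mu$ (induced by $d_1$), using that by Proposition~\ref{twistedtranslation}(vi) the Maurer--Cartan equations for $\delta_X$ and $\delta_Y$ are exactly the statements that $\mathfrak{X}_{\delta_X}$ and $\mathfrak{X}_{\delta_Y}$ are $B$-modules; the outcome should be that the $b_1^{tw}$-cocycles are precisely the bimodule maps that kill $(d(B))$, i.e.\ that factor through $W=U_1/(d(B))$, so that this space is $\Hom_{B\otimes B^{op}}(W,\Hom_{\mathbbm{k}}(X,Y))=\Hom_{\mathfrak{B}}(\mathfrak{X}_{\delta_X},\mathfrak{X}_{\delta_Y})$. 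It then remains to check functoriality: composition in $H^0(\twmod\mathcal{A})$ is induced by $b_2^{tw}$, which under the same dictionary becomes the composite $W\stackrel{\mu}{\rightarrow}W\otimes_B W\rightarrow\Hom_{\mathbbm{k}}(Y,Z)\otimes_B\Hom_{\mathbbm{k}}(X,Y)\stackrel{v_2}{\rightarrow}\Hom_{\mathbbm{k}}(X,Z)$ defining composition in $\mathfrak{B}-\Mod$, while the strict unit of $\mathcal{A}$ (condition (E3)) dualises to the counit $\varepsilon$ and hence to the identity morphisms of $\mathfrak{B}-\Mod$. Assembling these identifications gives an equivalence $H^0(\twmod\mathcal{A})\simeq\mathfrak{B}-\modu$, and combined with Theorem~\ref{kellerlefevre} this yields the claimed $\mathfrak{B}-\modu\simeq\mathcal{F}(\Delta)$.

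I expect the main obstacle to be precisely this morphism comparison: carrying the Koszul--Quillen signs correctly through the chain $M$ / $\mathbb{D}$-duality / projective-bimodule presentation, and tracking how the twisting datum $\delta$ gets absorbed into the $B$-module structure, so that $b_1^{tw}$ becomes the vanishing condition cutting out $W$ and $b_2^{tw}$ becomes $\mu$. The remaining ingredients --- the object correspondence, directedness, and the projective kernel --- are either already established in the excerpt or reduce to bookkeeping of gradings.
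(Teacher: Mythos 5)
Your proposal follows the paper's own route: reduce via Theorem \ref{kellerlefevre} to identifying $H^0(\twmod\mathcal{A})$ with $\mathfrak{B}-\modu$ for the box of Lemma \ref{boxconstruction}, match objects through $M$, Lemma \ref{representationsmodules} and Proposition \ref{twistedtranslation}, and match morphisms and composition by showing that $b_1^{tw}(sf)=0$ is equivalent to $\Phi_f(I)=0$ and that $b_2^{tw}$ corresponds to $\mu$ followed by $v_2$ (with the strict units giving $\varepsilon$). The sign-tracking computations you defer are precisely the two displayed calculations in the paper's proof, and the outcomes you predict for them agree with what the paper establishes, so this is essentially the same argument.
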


\begin{proof}
By the Theorem of Keller and Lef\`evre-Hasegawa (Theorem \ref{kellerlefevre}) it suffices to prove that $H^0(\twmod \mathcal{A})\cong  \mathfrak{B}-\modu$, where $\mathcal{A}=\Ext^*(\Delta,\Delta)$, regarded as an $A_\infty$-category.
Let $\mathfrak{B}$ be the box constructed in the previous lemma (with $\mathcal{A}$ as the input). On objects the definition of the functor $F:H^0(\twmod \mathcal{A})\to  \mathfrak{B}-\modu$ has been given in Section \ref{section6}, by performing the following steps: First note that an object in  $H^0(\twmod \mathcal{A})$ is the same as an object in the $A_\infty$-category $\twmod \mathcal{A}$, i.e. a pair $(X_0,\delta)$, where $\delta$ satisfies (TM1) and (TM2). Using Lemma \ref{isofunctor}, these pairs are in one-to-one-correspondence with pairs $(X_0,M(s\delta))$, where $\mathbb{L}$-module $X_0:Q_0\to \mathbbm{k}-\modu$ is a functor and $X_1:=M(s\delta):Q^0\to \Hom_{\mathbbm{k}}(X_0,X_0)$ is an $\mathbb{L}$-bimodule isomorphism, where $Q^0=(\mathbb{D}s\mathcal{A})^0$. Using Lemma \ref{representationsmodules}, this corresponds to a module over the path algebra $\mathbb{L}[Q^0]$, which we denoted $\mathfrak{X}_\delta$. Using Proposition \ref{twistedtranslation}, the conditions (TM1) and (TM2) amount to saying that $\mathfrak{X}_\delta$ is in fact a $B$-module (where $B$ is as constructed there). As $B$ coincides with the underlying algebra of the box $\mathfrak{B}$, the composition of these steps provides the definition of the functor $F$ on objects.\\
For the morphisms first note that $H^0(\operatorname{twmod}\mathcal{A})=Z^0(\operatorname{twmod}\mathcal{A})$ since $B^0(\operatorname{twmod}\mathcal{A})=0$ (this follows from the definition of $\twmod \mathcal{A}$ as there is no $\Ext^*$ in negative degrees). Thus a morphism in $H^0(\twmod \mathcal{A})$ is an $f\in (\add \mathcal{A}(X,Y))_0$ with $b_1^{tw}(sf)=0$. Using again the isofunctor $M$ this corresponds to an $\mathbb{L}$-bimodule homomorphism $M(sf)\in \Hom_{\mathbbm{k}}(Q^1,\Hom_{\mathbbm{k}}(X,Y))$. It remains to prove that the condition $b_1^{tw}(sf)=0$ is equivalent to $\Phi_f(I)=0$. In fact, by the results of subsection \ref{boxmorphismswithgenerators}, the latter condition precisely defines a morphism in the category of modules for $\mathfrak{B}$. The following calculation shows that indeed $b_1^{tw}(sf)=0$ is equivalent to $\Phi_f(I)=0$:
\begin{align*}
0&=Mb_1^{tw}(sf)=M\sum_{i_0,i_1\in \mathbb{N}_0}b_{1+i_0+i_1}^a((s\delta)^{\otimes i_1}\otimes sf\otimes (s\delta)^{\otimes  i_0})\\
&=\sum b_{1+i_0+i_1}^cM^{\otimes (1+i_0+i_1)} ((s\delta)^{\otimes i_1}\otimes sf\otimes (s\delta)^{\otimes i_0})\\
&=\sum v_{1+i_0+i_1}[M(s\delta)^{\otimes i_1}\otimes Mf\otimes M(s\delta)^{\otimes i_0}]d_{1+i_0+i_1}\\
&=-\sum (\Phi_f)_{1+i_0+i_1, i_0+1}(d_{1+i_0+i_1})\\
&=-\Phi_f\circ d
\end{align*}
since the degrees of the $s\delta$ are zero, whence they contribute no sign and if the resulting expression is non-zero, the $Mf$ is not 'switched' with any element of non-zero degree. Thus, the composition of these steps defines the functor $F$ on morphisms.\\
 For checking functoriality, the calculation is similar:
\begin{align*}
Mb_2^{tw}(sf,sg)&=\sum Mb_{2+i_0+i_1+i_2}^a((s\delta)^{\otimes i_2}\otimes sf\otimes (s\delta)^{\otimes i_1}\otimes sg\otimes (s\delta)^{\otimes i_0})\\
&=\sum b_{2+i_0+i_1+i_2}^cM^{\otimes (2+i_0+i_1+i_2}((s\delta)^{\otimes i_2}\otimes sf\otimes (s\delta)^{\otimes i_1}\otimes sg\otimes (s\delta)^{\otimes i_0})\\
&=\sum v_{2+i_0+i_1+i_2}[M(s\delta)^{\otimes i_2}\otimes M(sf)\otimes M(s\delta)^{\otimes i_1}\otimes M(sg)\otimes M(s\delta)^{\otimes i_0}]d_{2+i_0+i_1+i_2}
\end{align*}
Again the $M(s\delta)$ do not contribute a sign, and if the expression is non-zero, the $M(sf)$ is 'switched' exactly once with an element of degree $1$. So the sign in the last row given by $[\cdots ]$ compared to the tensor product (without the Koszul-Quillen sign rule) is $+1$, independent of the summand. Thus the two compositions coincide.
\end{proof}

\section{Exact structure on representations of boxes}

In this section we want to construct an exact structure on the category $\mathfrak{B}-\modu$, where $\mathfrak{B}$ is a box constructed by Lemma \ref{boxconstruction}. The general strategy follows \cite{BSZ09} but we have to take extra care because the underlying algebra of our box may have relations. The first lemma in this section describes the category of representations for the special boxes we construct in a way similar to the category of quiver representations.

\begin{lem}
Let $\mathfrak{B}$ be a box constructed in Lemma \ref{boxconstruction}. Let $d(r)=r_{-1}+\partial r$ where $r_{-1}$ is in the ideal generated by $Q^{-1}$ and $\partial r\in \mathbb{L}[Q^0]$.
Then the category $\mathfrak{B}-\modu$ can be described as follows:
\begin{enumerate}[(i)]
\item Objects are given by representations $X$ of $\mathbb{L}[Q^0]$ satisfying $X(\partial(r))=0$ for all $r\in Q^{-1}$.
\item Let $\omega_\mathtt{i}:=\mathbb{D}s\mathbbm{1}_\mathtt{i}$ and let $Q_\Omega$ be the $\mathbbm{k}$-subquiver of $Q^1$ given by the $\omega_i$. Let $\overline{Q}$ be a direct complement of $Q_\Omega$ in $Q^1$. Let $p:\mathbb{L}[Q]\to \mathbb{L}[Q]$ be the map factoring out the ideal spanned by all $Q^{\leq -1}$. Let $X$ and $Y$ be in $\mathfrak{B}-\modu$. Then a morphism $f:X\to Y$ is given by a map $f_q: X(s(q))\to Y(t(q))$ for every $q\in Q^1$ satisfying $\Phi_f(p(d(b)))=0$ for all $b\in Q^0$, where $\Phi_f(b_1\varphi b_2)=b_1f(\varphi)b_2$ for $b_1, b_2\in \mathbb{L}[Q^0]$, $\varphi\in Q^1$. Abbreviate $f_{\omega_\mathtt{i}}$ by $f_\mathtt{i}$.
\item For the composition note that $pd(\varphi)=\omega_\mathtt{j}\varphi+\varphi\omega_\mathtt{i}+\partial \varphi$ for $\varphi\in \overline{Q}(\mathtt{i},\mathtt{j})$ with $\partial \varphi \in \mathbb{L}[Q^0\oplus \overline{Q}]$. Use Sweedler notation to write $\partial \varphi=\sum_{(\varphi)} b_3\varphi_2b_2\varphi_1b_1$ with $b_k\in \mathbb{L}[Q^0]$ and $\varphi_k\in \overline{Q}$. Then, $(fg)_\mathtt{i}=f_\mathtt{i}g_\mathtt{i}$ and $(fg)_\varphi=f_\mathtt{j}g_\varphi+f_\varphi g_\mathtt{i}+\sum_{(\varphi)}b_3f_{\varphi_2}b_2f_{\varphi_1}b_1$.
\end{enumerate}
\end{lem}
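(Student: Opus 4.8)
All three statements are obtained by specialising the definition of $\mathfrak{B}\text{-}\modu$ given in Section~\ref{section7} to the concrete box $\mathfrak{B}=(B,W,\mu,\varepsilon)$ furnished by Lemma~\ref{boxconstruction}, so the proof is essentially a bookkeeping exercise and I would organise it around the three items. For (i), recall from Lemma~\ref{boxconstruction}(iii) that the objects of $\mathfrak{B}\text{-}\modu$ are the locally finite dimensional modules over $B=\mathbb{L}[Q^0]/(\mathbb{L}[Q^0]\cap I)$, where $I\subseteq T=\mathbb{L}[Q]$ is the differential ideal generated by $Q^{\le -1}$ and by $d(Q^{-1})$. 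I would determine $\mathbb{L}[Q^0]\cap I$ by a degree count: since $d$ has degree $1$, the only generators of $I$ of degree $0$ are the $d(r)$ with $r\in Q^{-1}$, and writing $d(r)=r_{-1}+\partial r$ with $r_{-1}$ in the ideal generated by $Q^{\le -1}$ and $\partial r\in\mathbb{L}[Q^0]$, one obtains that modulo the ideal generated by $Q^{\le -1}$ the degree-$0$ part of $I$ is exactly the two-sided ideal of $\mathbb{L}[Q^0]$ generated by the $\partial r$. Hence a $B$-module is the same as a representation $X$ of $\mathbb{L}[Q^0]$ with $X(\partial r)=0$ for every $r\in Q^{-1}$, which is (i).

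For (ii), I would invoke Subsection~\ref{boxmorphismswithgenerators}: since $U_1$ is the free $B$-bimodule on a chosen $\mathbbm{k}$-basis $\{\varphi_k\}$ of $Q^1$ (Lemma~\ref{boxconstruction}(iii)) and $W=U_1/(d(B))$, the space $\Hom_{\mathfrak{B}}(X,Y)=\Hom_{B\otimes B^{op}}(W,\Hom_{\mathbbm{k}}(X,Y))$ is identified there with the set of $\mathbb{L}$-bimodule maps $f\colon Q^1\to\Hom_{\mathbbm{k}}(X,Y)$ — equivalently, families $(f_q)_{q\in Q^1}$ with $f_q\colon X(s(q))\to Y(t(q))$ — subject to $\Phi_f(d(B))=0$, where $\Phi_f(b_1\varphi b_2)=b_1 f(\varphi)b_2$. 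It remains to bring the relation into the stated shape. Since $\Phi_f$ is $B$-bimodule linear and $d$ is a degree-$1$ derivation, so $d(b_1b_2)=d(b_1)b_2+b_1 d(b_2)$ (no sign, as $|b_i|=0$), it suffices to impose $\Phi_f(d(b))=0$ for $b$ running over a generating set of $B$, which one may take to be the arrows $Q^0$; and since $d(b)$, computed inside $T$, may acquire summands with factors from $Q^{\le -1}$ which vanish in $U_1$, replacing $d(b)$ by $p(d(b))$ changes nothing, $p$ being precisely the projection killing $Q^{\le -1}$. This yields (ii); the analogous computation with $\varepsilon$ — which by Lemma~\ref{dgcbox} sends $\omega_{\mathtt{i}}\mapsto\mathbbm{1}_{\mathtt{i}}$ and $\overline{Q}\mapsto 0$ — identifies $\id_X$ with the family $(\id_X)_{\omega_{\mathtt{i}}}=\mathbbm{1}_{X(\mathtt{i})}$, $(\id_X)_\varphi=0$ for $\varphi\in\overline{Q}$.

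For (iii), I would unwind the composition $fg=v_2\circ(f\otimes g)\circ\mu$ from Section~\ref{section7}, with $\mu$ determined by $\mu\pi=\pi^{\otimes 2}d_1$ and $d_1$ the restriction of $d$ to $U_1$, which — as $U$ is generated in degree $1$ over $B$ — lands in $U_1\otimes_B U_1$. A morphism is determined by its values on the generators $q\in Q^1$, so I only need $\mu(q)$. For $q=\omega_{\mathtt{i}}$, condition (d1) of Lemma~\ref{dgcbox} gives $d(\omega_{\mathtt{i}})=\omega_{\mathtt{i}}\otimes\omega_{\mathtt{i}}$, hence $\mu(\omega_{\mathtt{i}})=\omega_{\mathtt{i}}\otimes\omega_{\mathtt{i}}$ (images in $W$), and $v_2\circ(f\otimes g)$ gives $(fg)_{\mathtt{i}}=f_{\mathtt{i}}g_{\mathtt{i}}$. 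For $\varphi\in\overline{Q}(\mathtt{i},\mathtt{j})$, computing $d(\varphi)$ in $T$ and applying $p$ yields $\mu(\varphi)=\omega_{\mathtt{j}}\varphi+\varphi\omega_{\mathtt{i}}+\partial\varphi$ read as an element of $U_1\otimes_B U_1$, with $\partial\varphi=\sum_{(\varphi)}b_3\varphi_2 b_2\otimes\varphi_1 b_1$; feeding this through $f\otimes g$ (left tensor factor to $f$, right to $g$) and then $v_2$ produces $(fg)_\varphi=f_{\mathtt{j}}g_\varphi+f_\varphi g_{\mathtt{i}}+\sum_{(\varphi)}b_3 f_{\varphi_2}b_2 g_{\varphi_1}b_1$, as claimed.

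The genuine content here is slight, the substance having been settled in Sections~\ref{section6}--\ref{section7}; the step most likely to require care is matching the $B$-bimodule tensor identifications in (iii) — in particular verifying that $d(\varphi)$ has no component outside $U_1\otimes_B U_1$ and keeping track of the middle coefficients $b_2$ across the tensor — and, in (ii), checking that reducing the relation $\Phi_f(d(B))=0$ from all of $B$ to the arrows $Q^0$ and replacing $d$ by $p\circ d$ is lossless. These are precisely the places where the conventions of the preceding sections must be matched exactly.
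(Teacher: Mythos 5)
Your argument is correct and is essentially the paper's own proof written out in full: the paper disposes of the lemma in one line, saying it follows as in Subsection \ref{boxmorphismswithgenerators} with $B$ replaced by $\mathbb{L}[Q^0]$, which is exactly the specialisation you carry out in (i)--(iii) (identifying $\mathbb{L}[Q^0]\cap I$ with the ideal generated by the $\partial r$, reducing $\Phi_f(d(B))=0$ to generators via the derivation property and inserting $p$, and unwinding $\mu$ from $d_1$ using (d1) and (d3)). Note also that your composition formula $(fg)_\varphi=f_{\mathtt{j}}g_\varphi+f_\varphi g_{\mathtt{i}}+\sum_{(\varphi)}b_3 f_{\varphi_2}b_2 g_{\varphi_1}b_1$ is the intended one --- the $f_{\varphi_1}$ in the printed statement is a typo, as the later uses of the formula (e.g.\ in Lemma \ref{makeinduced}) confirm.
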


\begin{proof}
This follows as in Subsection \ref{boxmorphismswithgenerators}, replacing $B$ with $\mathbb{L}[Q^0]$.
\end{proof}

We assume that the quiver $Q$ is directed. We define a valuation $\nu:Q\to \mathbb{N}_0$ by $\nu(a)=t(a)-s(a)$. In the remainder we will prove many statements by induction on $\nu(a)$. This strategy is similar to the classical situation of free triangular boxes. Also similarly to the classical situation in the following proposition and the lemma thereafter we introduce certain normal forms for epimorphisms and monomorphisms.

\begin{prop}\label{definemorphism}
Let $\mathfrak{B}$ be a box constructed in Lemma \ref{boxconstruction}. Let $X\in \mathfrak{B}-\modu$ be a representation and $(Y_{\mathtt{i}})_{\mathtt{i}=\mathtt{1}}^{\mathtt{n}}$ be $\mathbbm{k}$-vector spaces with vector space isomorphisms $f_\mathtt{i}:X(\mathtt{i})\to Y_\mathtt{i}$ (respectively $g_\mathtt{i}:Y_\mathtt{i}\to X(\mathtt{i})$) and vector space homomorphisms $f_\varphi:X(\mathtt{i})\to Y_\mathtt{j}$ (respectively $g_\varphi:Y_\mathtt{i}\to X(\mathtt{j})$) for any $\varphi\in \overline{Q}(\mathtt{i},\mathtt{j})$. Then there is a unique representation $Y\in \mathfrak{B}-\modu$ and a morphism $f:X\to Y$ (respectively $g:Y\to X$) such that $Y(\mathtt{i})=Y_\mathtt{i}$ with the predefined $f_\mathtt{i}$ (respectively $g_\mathtt{i}$) for all $\mathtt{i}\in Q_0$ and predefined $f_\varphi$ (respectively $g_\varphi$) for all $\varphi\in \overline{Q}$.
\end{prop}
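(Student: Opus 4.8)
The plan is to construct $Y$ and $f$ simultaneously by induction on the valuation $\nu$, with uniqueness forced at every stage; this is the analogue, for the boxes produced by Lemma \ref{boxconstruction}, of the classical construction of an induced representation over a free triangular box, the one new point being that the underlying algebra $B$ now carries relations.

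By the preceding lemma a morphism $f\colon X\to Y$ in $\mathfrak{B}-\modu$ is exactly a family $(f_\mathtt{i})_\mathtt{i}$ together with maps $f_\varphi\colon X(s(\varphi))\to Y(t(\varphi))$ for $\varphi\in\overline{Q}$ such that $\Phi_f(p\,d(b))=0$ for every arrow $b\in Q^0$. Writing $b\in Q^0(\mathtt{i},\mathtt{j})$ and inserting $p\,d(b)=\omega_\mathtt{j}b-b\omega_\mathtt{i}+\partial b$ with $\partial b\in\overline{U}$ (condition (d2)), this is the equation
\[
Y(b)\circ f_\mathtt{i}=f_\mathtt{j}\circ X(b)+\Phi_f(\partial b).
\]
Since $f_\mathtt{i}$ is invertible it \emph{determines} $Y(b)=\bigl(f_\mathtt{j}\circ X(b)+\Phi_f(\partial b)\bigr)\circ f_\mathtt{i}^{-1}$, which already yields uniqueness of $(Y,f)$. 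For existence I would set $Y(\mathtt{i}):=Y_\mathtt{i}$ and define the $Y(b)$ by this formula. The formula is not circular because of a valuation count: written in terms of the generators $\overline{Q}$, every summand of $\partial b$ has the shape $c'\varphi c''$ with $\varphi\in\overline{Q}$ of valuation $\geq 1$ and $c',c''$ paths in $Q^0$, so $\nu(c')+\nu(c'')<\nu(b)$; hence $\Phi_f(\partial b)$ only involves the prescribed $f_\varphi$ and the already-defined maps $Y(b')$ with $\nu(b')<\nu(b)$. Ordering the arrows by $\nu$ we obtain a representation $Y$ of the path category $\mathbb{L}[Q^0]$, and $f=(f_\mathtt{i},f_\varphi)$ satisfies all the morphism equations by construction. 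It remains to verify that $Y$ is genuinely an object of $\mathfrak{B}-\modu$, i.e. that $Y$ kills the relations $\partial r$ ($r\in Q^{-1}$) of $B$; this is the step that is vacuous in the classical free case, and it is the main obstacle.

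To settle it I would first propagate the morphism equations from arrows to arbitrary $z\in\mathbb{L}[Q^0]$. Since $d$ is a derivation, $p$ an algebra map and $\Phi_f$ a bimodule map, and since the $\omega$-terms telescope to $\omega_{t(z)}z-z\,\omega_{s(z)}$, an induction on path length gives
\[
Y(z)\circ f_{s(z)}=f_{t(z)}\circ X(z)+\Phi_f(\partial z),
\]
where $\partial z$ is the $\overline{Q}$-component of $p\,d(z)$ (for an arrow this is the previous equation). Applying this to $z=\partial r$ and using $X(\partial r)=0$ reduces $Y(\partial r)\circ f_{s(\partial r)}=0$ to $\Phi_f(\partial(\partial r))=0$. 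Here I would use that $\partial$ makes $B[\overline{U}]=\bigoplus_{i\geq 0}\overline{U}^{\otimes i}$ a differential graded algebra, as in the proof of Lemma \ref{projectivekernel}, so $\partial^2=0$ and $\partial$ descends to $B$; since $\partial r$ is zero in $B$, the element $\partial(\partial r)$ lies in the sub-bimodule of the free $\mathbb{L}[Q^0]$-bimodule on $\overline{Q}$ generated by $I'\cdot\overline{Q}$ and $\overline{Q}\cdot I'$, where $I'$ is the relation ideal of $B$. Feeding this into $\Phi_f$, each resulting term is annihilated either by $X(I')=0$ or — using the valuation count again — by the inductive hypothesis $Y(\partial r')=0$ for all relations $r'$ with $\nu(r')<\nu(r)$. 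Invertibility of $f_{s(\partial r)}$ then gives $Y(\partial r)=0$, so interleaving this check with the construction of the $Y(b)$ in a single induction on $\nu$ finishes the proof. The assertion about $g\colon Y\to X$ is obtained by the symmetric induction. The only delicate (but routine) points are the bookkeeping of the $\partial$- and $\omega$-terms and the observation that degree-zero elements contribute no Koszul--Quillen signs.
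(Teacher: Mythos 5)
Your proof is correct and follows essentially the same route as the paper: induction on the valuation $\nu$, the forced formula $Y(b)=(f_{t(b)}X(b)+\Phi_f(\partial b))f_{s(b)}^{-1}$ giving uniqueness, and the verification $Y(\partial r)=0$ by expanding $\Phi_f(p\,d(\partial r))=0$ and using $X(\partial r)=0$ together with invertibility of the $f_\mathtt{i}$. The only difference is that you spell out why the remaining term $\Phi_f(p\,\partial^2 r)$ vanishes (it lies in $I'\overline{Q}+\overline{Q}I'$ and is killed by $X$ on one side and the inductive vanishing of $Y$ on lower-valuation relations on the other), a point the paper's proof leaves implicit.
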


\begin{proof}
We only prove the case $f:X\to Y$. The other case is dual. We define $Y(a)$ inductively by induction on $\nu(a)$. For $\nu(a)=1$ we can define $Y(a):=f_{t(a)}(X(a)+\Phi_f(\partial a))f_{s(a)}^{-1}$. By induction we can assume that $Y(b)$ is defined for all $b\in Q^0$ with $\nu(b)<k$ and that furthermore for any $r\in Q^{-1}$ with $\nu(r)<k$ holds $Y(\partial r)=0$. For $f$ to be a representation we want that 
\begin{align*}
0&=\Phi_f(pd(a))=\Phi_f(\omega_{t(a)}a-a\omega_{s(a)}+\partial a)\\
&=f_{t(a)}X(a)-Y(a)f_{s(a)}+\Phi_f(\partial a),
\end{align*}
where $p:\mathbb{L}[Q]\to \mathbb{L}[Q]$ is as before the map factoring out the ideal spanned by all $Q^{\leq -1}$. Since by induction $\Phi_f$ has been constructed on all constituents of $\partial a$ one can define $Y(a):=(f_{t(a)}X(a)+\Phi_f(\partial a))f_{s(a)}^{-1}$. We have to check that $Y$ defined like this satisfies $Y(\partial r)=0$ for all $r$ with $\nu(r)=k$. We furthermore have that $\Phi_f$ is defined on all constituents of $d(\partial r)$ and $\Phi_f(d(\partial r))=0$. Thus:
\begin{align*}
0&=\Phi_f(pd(\partial r))=\Phi_f(\omega_{t(r)}\partial r-\partial r\omega_{s(r)}+p\partial^2r)\\
&=f_{t(r)}X(\partial r)-Y(\partial r)f_{s(r)}
\end{align*}
Since $X$ is a representation of $\mathfrak{B}$ it satisfies $X(\partial r)=0$, and as the $f_\mathtt{i}$ are isomorphisms $Y(\partial r)=0$ as well.
\end{proof}

\begin{lem}\label{makeinduced}
Let $\mathfrak{B}$ be a box constructed in Lemma \ref{boxconstruction}. Let $f:X\to Y$ be a morphism in $\mathfrak{B}-\modu$ such that for any $\mathtt{i}$ the map $f_\mathtt{i}:X(\mathtt{i})\to Y(\mathtt{i})$ is an epimorphism (respectively a monomorphism). Then there exists a representation $Z\in \mathfrak{B}-\modu$ and a homomorphism $h:Z\to X$ (respectively $h:Y\to Z$), such that $h_\mathtt{i}$ are isomorphisms and $(fh)_\varphi=0$ (respectively $(hf)_\varphi=0$) for all $\varphi\in \overline{Q}$.
\end{lem}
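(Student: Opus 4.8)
The plan is to build $Z$ and the morphism $h$ by induction along the valuation $\nu$, in the spirit of Proposition~\ref{definemorphism}, prescribing the $\overline{Q}$-components of $h$ one valuation layer at a time so that the corresponding components of the composite are killed. I would treat the case where all $f_\mathtt{i}$ are epimorphisms in detail; the monomorphism case is dual. Since $\mathbbm{k}$ is a field, I would first fix for each $\mathtt{i}$ a $\mathbbm{k}$-linear section $\sigma_\mathtt{i}\colon Y(\mathtt{i})\to X(\mathtt{i})$ of $f_\mathtt{i}$, and set $Z(\mathtt{i}):=X(\mathtt{i})$, $h_\mathtt{i}:=\mathbbm{1}_{X(\mathtt{i})}$ (so the $h_\mathtt{i}$ are the required isomorphisms). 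By Proposition~\ref{definemorphism} it then remains only to specify the maps $h_\varphi\colon X(s(\varphi))\to X(t(\varphi))$ for $\varphi\in\overline{Q}$: any such choice extends uniquely to a representation $Z\in\mathfrak{B}-\modu$ agreeing with $X$ on vertices and to a morphism $h\colon Z\to X$ with these components.

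Next I would use the description of morphisms in $\mathfrak{B}-\modu$ from Subsection~\ref{boxmorphismswithgenerators} and the composition rule spelled out above: for $\varphi\in\overline{Q}$, writing $\mathtt{i}=s(\varphi)$, $\mathtt{j}=t(\varphi)$ and using $h_\mathtt{i}=\mathbbm{1}$,
\[
(fh)_\varphi \;=\; f_\mathtt{j}\,h_\varphi \;+\; f_\varphi \;+\; \sum_{(\varphi)} b_3\,f_{\varphi_2}\,b_2\,h_{\varphi_1}\,b_1 ,
\]
where $\partial\varphi=\sum_{(\varphi)}b_3\varphi_2 b_2\varphi_1 b_1$ with $b_k\in\mathbb{L}[Q^0]$ and $\varphi_1,\varphi_2\in\overline{Q}$, the $b_k$ acting through $Y$, $X$ and $Z$ respectively. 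The decisive point is the valuation estimate: every $\overline{Q}$-arrow occurring in $\partial\varphi$ has strictly smaller valuation than $\varphi$, because each summand of $\partial\varphi$ is a path from $\mathtt{i}$ to $\mathtt{j}$ in $Q^0\oplus\overline{Q}$ containing exactly the two $\overline{Q}$-arrows $\varphi_1,\varphi_2$, so $\nu(\varphi_1)+\nu(\varphi_2)\le\nu(\varphi)$ with $\nu(\varphi_1),\nu(\varphi_2)\ge 1$; likewise each $Q^0$-arrow inside the $b_k$ has valuation $<\nu(\varphi)$.

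With this in hand I would run the induction on $k=\nu(\varphi)$, at each stage $k$ first defining the $h_\varphi$ with $\nu(\varphi)=k$ and then extending the action of $Z$ to the $Q^0$-arrows of valuation $k$. Assuming $h_\psi$ known for $\nu(\psi)<k$ and $Z$ known on $Q^0$-arrows of valuation $<k$, the correction term $E_\varphi:=\sum_{(\varphi)}b_3 f_{\varphi_2} b_2 h_{\varphi_1} b_1\colon X(s(\varphi))\to Y(t(\varphi))$ involves, by the estimate, only already-constructed data, so one sets $h_\varphi:=-\,\sigma_\mathtt{j}\circ(f_\varphi+E_\varphi)$; then $f_\mathtt{j} h_\varphi=-(f_\varphi+E_\varphi)$ and hence $(fh)_\varphi=0$. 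Having fixed all $h_\varphi$ of valuation $k$, one extends $Z$ to the $Q^0$-arrows $a$ of valuation $k$ exactly as in the proof of Proposition~\ref{definemorphism}: since the $h_\mathtt{i}$ are isomorphisms, the equation $\Phi_h(pd(a))=0$ determines $Z(a)$ uniquely, and $Z(\partial r)=0$ for the relations $r\in Q^{-1}$ of valuation $k$ follows from $X(\partial r)=0$. As $Q$ is finite this terminates, giving $Z\in\mathfrak{B}-\modu$ and $h\colon Z\to X$ with $h_\mathtt{i}=\mathbbm{1}_{X(\mathtt{i})}$ and $(fh)_\varphi=0$ for all $\varphi\in\overline{Q}$. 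The monomorphism case goes through verbatim with sections replaced by retractions $\rho_\mathtt{i}\colon Y(\mathtt{i})\to X(\mathtt{i})$ of the $f_\mathtt{i}$, with $Z:=Y$ on vertices, and with the equation $(hf)_\varphi=f_\varphi+h_\varphi f_\mathtt{i}+\sum_{(\varphi)}b_3 h_{\varphi_2} b_2 f_{\varphi_1} b_1=0$ solved by $h_\varphi:=-(f_\varphi+E'_\varphi)\circ\rho_\mathtt{i}$, using the dual form of Proposition~\ref{definemorphism} to build $Z$.

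The step I expect to be the main obstacle is not any single calculation but the careful organisation of this \emph{simultaneous} recursion: one must verify that $E_\varphi$ at valuation $k$ refers only to $h_\psi$ and $Z(b)$ of strictly smaller valuation, whereas $Z(a)$ at valuation $k$ genuinely needs the $h_\varphi$ up to valuation $k$ (since $\partial a$, for $a\in Q^0$, may contain a single $\overline{Q}$-arrow of valuation $\nu(a)$), so the order ``first all $h$'s of valuation $k$, then all $Z$'s of valuation $k$'' must be respected. This bookkeeping, together with the valuation estimate on the constituents of $\partial\varphi$ and $\partial a$, is precisely where directedness of $Q$ is used to make the recursion well founded.
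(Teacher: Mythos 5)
Your argument is correct, but it is organised differently from the paper's. The paper proves the lemma by iterating an elementary claim: assuming $f_q=0$ for all $q$ of valuation $\leq k$, it applies Proposition \ref{definemorphism} with $h_\mathtt{i}=\mathbbm{1}_{X(\mathtt{i})}$ and the one-line prescription $h_\varphi=-f'_{t(\varphi)}f_\varphi$ (a fixed right inverse $f'_{t(\varphi)}$ of $f_{t(\varphi)}$); in the range $\nu(\varphi)\leq k+1$ the Sweedler correction term in $(fh)_\varphi$ simply vanishes, because every $\varphi_2$ occurring in $\partial\varphi$ has valuation $\leq k$ and so $f_{\varphi_2}=0$ by hypothesis. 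The desired morphism is then the composite of finitely many such layerwise corrections. You instead build a single $h$ in one pass, absorbing the correction term $E_\varphi$ into the definition $h_\varphi=-\sigma_{t(\varphi)}\circ(f_\varphi+E_\varphi)$. The price is exactly the circularity you identify: $E_\varphi$ involves the action of the new representation $Z$ on $Q^0$-paths, so you cannot invoke Proposition \ref{definemorphism} as a black box (its hypotheses require the $h_\varphi$ to be given in advance); you must rerun its inductive construction of $Z(a)$ interleaved with the definition of the $h_\varphi$, in the order ``first all $h_\varphi$ of valuation $k$, then all $Z(a)$ of valuation $k$'' that you specify. Your valuation estimate (each of $\varphi_1,\varphi_2$ has $\nu\geq 1$, and $\nu$ is additive along paths, so all constituents of $\partial\varphi$ and all $Q^0$-arrows inside the $b_k$'s have valuation $<\nu(\varphi)$) is the same directedness input the paper uses, and it does make your simultaneous recursion well founded; the dual monomorphism case goes through as you indicate. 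What the paper's organisation buys is that each step is a clean application of Proposition \ref{definemorphism} with data depending only on $f$, with no correction-term bookkeeping; what yours buys is a single explicit morphism $h$ with $h_\mathtt{i}=\mathbbm{1}_{X(\mathtt{i})}$ rather than a composite of finitely many intermediate isomorphisms.
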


\begin{proof}
We only prove the case of $f_\mathtt{i}$ being epimorphisms. The other statement is dual. Let $Q^1_{\nu\leq k}=\{a\in Q^1|\nu(a)\leq k\}$. Then the claim follows by induction on $k$ from the following:
\begin{description}
\item[Claim] Suppose $f_q=0$ for all $q\in Q^1_{\nu\leq k}$, then there exists a morphism $h:Z\to X$ in $\mathfrak{B}-\modu$ with $h_\mathtt{i}$ an isomorphism and $(fh)_\varphi=0$ for all $\varphi \in Q^1_{\nu\leq {k+1}}(\mathtt{i},\mathtt{j})$.
\end{description}
Let $f_\mathtt{i}'$ be a right inverse of $f_\mathtt{i}$. Now, use the foregoing proposition with $h_\mathtt{i}=\mathbbm{1}_{X(\mathtt{i})}$ and $h_\varphi=-f_{t(\varphi)}'f_\varphi$ to define a representation $Z$ and a morphism $h:Z\to X$ in $\mathfrak{B}-\modu$. Obviously, $h_\mathtt{i}$ is an isomorphism and by using Sweedler notation we have for $\varphi\in Q^1_{\nu\leq k+1}$: 
\[(fh)_\varphi=f_\mathtt{j}h_\varphi+f_\varphi h_\mathtt{i}+\sum_{(\varphi)} b_3 f_{\varphi_2} b_2 h_{\varphi_1} b_1= -f_\mathtt{j}f_{t(\varphi)}'f_\varphi+f_\varphi=0.\]
\end{proof}

\begin{lem}
Let $\mathfrak{B}$ be a box constructed in Lemma \ref{boxconstruction}. A morphism $f:X\to Y$ in $\mathfrak{B}-\modu$ is an isomorphism iff all $f_\mathtt{i}$ are isomorphisms of $\mathbbm{k}$-vector spaces.
\end{lem}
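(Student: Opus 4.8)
The plan is to prove the two implications separately, the forward one being trivial and the reverse one requiring a reduction to a normalised case via Lemma \ref{makeinduced}.

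\medskip
\textbf{The easy direction.} If $g$ is a two-sided inverse of $f$, then from the composition formula $(gf)_\mathtt{i}=g_\mathtt{i}f_\mathtt{i}$ and $(fg)_\mathtt{i}=f_\mathtt{i}g_\mathtt{i}$, together with the fact that the identity morphism $\mathbbm{1}_X$ has components $(\mathbbm{1}_X)_\mathtt{i}=\mathbbm{1}_{X(\mathtt{i})}$ (and $(\mathbbm{1}_X)_\varphi=0$ for $\varphi\in\overline{Q}$, as it factors through the counit $\varepsilon$ with $\varepsilon(\omega_\mathtt{i})=\mathbbm{1}_\mathtt{i}$ and $\varepsilon|_{\overline{W}}=0$), we get $g_\mathtt{i}f_\mathtt{i}=\mathbbm{1}_{X(\mathtt{i})}$ and $f_\mathtt{i}g_\mathtt{i}=\mathbbm{1}_{Y(\mathtt{i})}$, so each $f_\mathtt{i}$ is a vector space isomorphism.

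\medskip
\textbf{A normalised special case.} Suppose first that, in addition to all $f_\mathtt{i}$ being isomorphisms, $f_\varphi=0$ for every $\varphi\in\overline{Q}$. I would then define $g\colon Y\to X$ by $g_\mathtt{i}:=f_\mathtt{i}^{-1}$ and $g_\varphi:=0$, and check that $g$ is a genuine $\mathfrak{B}$-morphism, i.e.\ that $\Phi_g(pd(a))=0$ for all $a\in Q^0$. Here one uses that, by Lemma \ref{dgcbox}, $pd(a)=\omega_{t(a)}a-a\omega_{s(a)}+\partial a$ with $\partial a$ containing no $\omega_\mathtt{i}$, so that the relation $\Phi_f(pd(a))=0$ for the given morphism $f$ reduces to $f_{t(a)}X(a)=Y(a)f_{s(a)}$, whence $Y(a)=f_{t(a)}X(a)f_{s(a)}^{-1}$; substituting this into $\Phi_g(pd(a))=g_{t(a)}Y(a)-X(a)g_{s(a)}$ (the $\partial a$-part vanishes since $g_\varphi=0$) gives $0$. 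Once $g\in\Hom_{\mathfrak B}(Y,X)$, the composition formula yields $(gf)_\mathtt{i}=\mathbbm{1}_{X(\mathtt{i})}$ and $(gf)_\varphi=g_\mathtt{j}f_\varphi+g_\varphi f_\mathtt{i}+\sum_{(\varphi)}b_3 g_{\varphi_2}b_2 f_{\varphi_1}b_1=0$ (each summand contains an $f_\varphi$ or a $g_\varphi$ with argument in $\overline{Q}$), so $gf=\mathbbm{1}_X$; symmetrically $fg=\mathbbm{1}_Y$.

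\medskip
\textbf{The general case.} Now assume only that all $f_\mathtt{i}$ are isomorphisms, hence in particular epimorphisms and monomorphisms of vector spaces. Applying Lemma \ref{makeinduced} in its epimorphism form to $f$ produces $h\colon Z\to X$ with all $h_\mathtt{i}$ isomorphisms and $(fh)_\varphi=0$ for all $\varphi\in\overline{Q}$; then $fh\colon Z\to Y$ has all $(fh)_\mathtt{i}=f_\mathtt{i}h_\mathtt{i}$ isomorphisms and vanishing $\overline{Q}$-components, so by the special case $fh$ is an isomorphism and $r:=h(fh)^{-1}$ satisfies $fr=\mathbbm{1}_Y$. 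Symmetrically, applying Lemma \ref{makeinduced} in its monomorphism form yields $h'\colon Y\to Z'$ with the $h'_\mathtt{i}$ isomorphisms and $(h'f)_\varphi=0$, so $h'f$ is an isomorphism and $\ell:=(h'f)^{-1}h'$ satisfies $\ell f=\mathbbm{1}_X$. Since $f$ has both a left and a right inverse, $\ell=\ell(fr)=(\ell f)r=r$, and $f$ is an isomorphism.

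\medskip
The only real content is the normalised special case, and within it the verification that the candidate inverse $g$ respects the relations of $\mathfrak{B}$; this is exactly the point where the precise shape of the differential $d$ from Lemma \ref{dgcbox} — in particular the absence of $\omega$-terms in $\partial a$ — is needed. Everything else is bookkeeping with the composition formula plus the formal remark that a morphism admitting both a left and a right inverse is invertible.
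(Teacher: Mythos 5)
Your proposal is correct and follows essentially the same route as the paper: the forward direction via the composition formula, and the converse by applying Lemma \ref{makeinduced} in both its epimorphism and monomorphism forms to reduce to a morphism with vanishing $\overline{Q}$-components, obtaining a right and a left inverse and hence an isomorphism. The only (purely presentational) difference is that you verify directly that the componentwise inverse of such a normalised morphism is again a box morphism, where the paper instead observes that the normalised composite lies in the image of the embedding functor $\pi_{\mathfrak{B}}^*:B-\modu\to\mathfrak{B}-\modu$ and is therefore the image of a $B$-module isomorphism.
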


\begin{proof}
If $f$ is an isomorphism in $\mathfrak{B}-\modu$, then there exists $g$ with $fg=\mathbbm{1}_Y$ and in particular $(fg)_\mathtt{i}=f_\mathtt{i}g_\mathtt{i}=\mathbbm{1}_{Y(\mathtt{i})}$. Dually also $g_\mathtt{i}f_\mathtt{i}=\mathbbm{1}_{X(\mathtt{i})}$. Hence $f_\mathtt{i}$ is an isomorphism.\\
Conversely, assume that $f$ is a morphism in $\mathfrak{B}$ with all $f_\mathtt{i}$ being isomorphisms. By the preceding lemma there exists a morphism $h$ with all $h_\mathtt{i}$ being isomorphisms and $(hf)_\varphi=0$ for all $\varphi\in \overline{Q}$. Hence $(hf)_\mathtt{i}$ are isomorphisms and $(hf)_\varphi=0$. Thus $(hf)$ is in the image of the functor $\pi_{\mathfrak{B}}^*: B-\modu\to \mathfrak{B}-\modu$ being the identity on objects and sending a morphism $(l_\mathtt{i})_{\mathtt{i}=\mathtt{1}}^\mathtt{n}$ to the morphism with the same $l_\mathtt{i}$ and $l_\varphi=0$ for all $\varphi\in \overline{Q}$. Thus, $hf$ is the image of an isomorphism and hence, is an isomorphism. Thus, $f$ is a section. Dually, using the other part of the foregoing proposition, one can show that $f$ also is a retraction.
\end{proof}

\begin{prop}
Let $\mathfrak{B}$ be a box constructed in Lemma \ref{boxconstruction}. Then $\mathfrak{B}-\modu$ is fully additive (also called idempotent complete or Karoubian), i.e. if $e\in \Hom_\mathfrak{B}(X,X)$ is an idempotent, then there is a box representation $Y$ and morphisms of box representations $\pi:X\to Y$ and $\iota: Y\to X$ such that $e=\iota\pi$ and $\pi\iota=\operatorname{id}_Y$.
\end{prop}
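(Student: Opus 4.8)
The plan is to construct the idempotent splitting by induction along the valuation $\nu$ on the arrows of $Q^1$, using the normal form results already established in Proposition \ref{definemorphism} and Lemma \ref{makeinduced}. Given an idempotent $e \in \Hom_{\mathfrak B}(X,X)$, first look at the degree-zero part: each $e_{\mathtt i}: X(\mathtt i)\to X(\mathtt i)$ is an idempotent $\mathbbm k$-linear map (this follows from $(ee)_{\mathtt i}=e_{\mathtt i}e_{\mathtt i}$), so each splits as a vector space: write $X(\mathtt i) = Y_{\mathtt i}\oplus K_{\mathtt i}$ with $e_{\mathtt i}$ the projection onto $Y_{\mathtt i}$, and let $p_{\mathtt i}: X(\mathtt i)\to Y_{\mathtt i}$, $q_{\mathtt i}: Y_{\mathtt i}\to X(\mathtt i)$ be the canonical maps, so $q_{\mathtt i}p_{\mathtt i}=e_{\mathtt i}$, $p_{\mathtt i}q_{\mathtt i}=\mathbbm 1_{Y_{\mathtt i}}$. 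The candidate box representation $Y$ will have $Y(\mathtt i)=Y_{\mathtt i}$.

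Next I would use Proposition \ref{definemorphism} to promote these vector-space data to morphisms of box representations. Apply the proposition with the monomorphisms $q_{\mathtt i}: Y_{\mathtt i}\hookrightarrow X(\mathtt i)$ and with $q_\varphi := 0$ for all $\varphi\in\overline Q$ (in the "monomorphism" version of the proposition, which produces a target representation $Y$ together with a morphism into it); this manufactures a representation structure on $Y$ and a morphism $\iota: Y\to X$ with $\iota_{\mathtt i}=q_{\mathtt i}$ and $\iota_\varphi=0$. Dually, apply the proposition in its "epimorphism" form with the $p_{\mathtt i}$ and $p_\varphi:=0$ to get a morphism $\pi: X\to Y$ with $\pi_{\mathtt i}=p_{\mathtt i}$ and $\pi_\varphi=0$ --- but one must check these two applications yield the \emph{same} representation structure on $Y$, which is where the real content lies. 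Alternatively, and more cleanly, one should only invoke Proposition \ref{definemorphism} once (say to build $\iota: Y\to X$ as above), obtain $Y$ from it, and then \emph{define} $\pi$ directly by $\pi_{\mathtt i}:=p_{\mathtt i}$, $\pi_\varphi := -\sum$ (correction terms forced by the composition formula $(fg)_\varphi=f_{\mathtt j}g_\varphi+f_\varphi g_{\mathtt i}+\sum_{(\varphi)}b_3 f_{\varphi_2}b_2 f_{\varphi_1}b_1$ so that $\iota\pi=e$), verifying inductively on $\nu(\varphi)$ that this $\pi$ is a well-defined morphism.

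The main obstacle, and where I expect most of the work to go, is verifying the two identities $\iota\pi=e$ and $\pi\iota=\mathbbm 1_Y$ as morphisms of box representations, i.e. at the level of \emph{all} components indexed by $\varphi\in\overline Q$, not merely the vertex components. The vertex components are immediate ($q_{\mathtt i}p_{\mathtt i}=e_{\mathtt i}$ and $p_{\mathtt i}q_{\mathtt i}=\mathbbm 1_{Y_{\mathtt i}}$); the point is that the $\varphi$-components of $\iota\pi$ must reproduce $e_\varphi$ and those of $\pi\iota$ must vanish. This is an induction on $\nu(\varphi)$: using $\iota_\varphi=0$ the composition formula collapses $(\iota\pi)_\varphi$ to $\iota_{\mathtt j}\pi_\varphi$ plus lower-$\nu$ terms, so $\pi_\varphi$ is determined by requiring $(\iota\pi)_\varphi=e_\varphi$, and one then checks that with this choice $(\pi\iota)_\varphi$ vanishes --- this last step uses the idempotency relation $e^2=e$ expanded componentwise together with the inductive hypothesis, and is the genuinely computational heart of the argument. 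Once both identities hold componentwise, $\pi$ and $\iota$ are the desired splitting morphisms and $Y$ is automatically in $\mathfrak B-\modu$ (locally finite dimensional, since each $Y_{\mathtt i}$ is a subspace of the finite-dimensional $X(\mathtt i)$), completing the proof.
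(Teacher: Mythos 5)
There is a genuine gap, in fact two, and both occur at the step you yourself flag as "where the real content lies". First, Proposition \ref{definemorphism} cannot be invoked with the inclusions $q_\mathtt{i}\colon Y_\mathtt{i}\hookrightarrow X(\mathtt{i})$ (nor with the projections $p_\mathtt{i}$): that proposition requires the vertex maps to be vector space \emph{isomorphisms}, and its proof uses this essentially, defining $Y(a)$ by conjugation with $f_{s(a)}^{-1}$ and deducing $Y(\partial r)=0$ from $X(\partial r)=0$ via invertibility. Since $e$ is only an idempotent in $\mathfrak{B}-\modu$, the maps $e_\mathtt{i}$ need not commute with the $X(a)$ (they satisfy only the weaker morphism condition involving the $e_\varphi$), so the subspaces $\operatorname{im}e_\mathtt{i}$ do not naively carry a $B$-module structure; your plan never actually produces the representation $Y$. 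Second, even granting some $Y$ and a morphism $\iota$ with $\iota_\varphi=0$, the equation $\iota\pi=e$ is in general unsolvable: by the composition formula its $\varphi$-component collapses to $q_{t(\varphi)}\pi_\varphi=e_\varphi$, which forces $\operatorname{im}e_\varphi\subseteq\operatorname{im}e_{t(\varphi)}$, i.e. $e_{t(\varphi)}e_\varphi=e_\varphi$. Idempotency only gives $e_\varphi=e_{t(\varphi)}e_\varphi+e_\varphi e_{s(\varphi)}+(\text{lower terms})$, hence only $e_{t(\varphi)}e_\varphi e_{s(\varphi)}=0$ modulo lower terms. For instance $e_{t(\varphi)}=0$, $e_{s(\varphi)}=\mathbbm{1}$, $e_\varphi\neq 0$ defines an idempotent for which your $\iota,\pi$ cannot satisfy $\iota\pi=e$.

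The paper's proof sidesteps both problems by not splitting $e$ directly: it first shows, by induction on the valuation $\nu$, that $e$ is \emph{conjugate} to an idempotent lying in the image of $\pi^*_{\mathfrak{B}}\colon B-\modu\to\mathfrak{B}-\modu$, i.e. one with all components $e_\varphi=0$. The conjugating isomorphism is built from Proposition \ref{definemorphism} applied legitimately, with vertex maps equal to the identities and $h_\varphi=e_\varphi(2e_{s(\varphi)}-\mathbbm{1}_{X(s(\varphi))})$, and the computation $(h^{-1}eh)_\varphi=(e-e^2)_\varphi=0$ uses exactly the relation $e_{t(\varphi)}e_\varphi e_{s(\varphi)}=0$ extracted from idempotency. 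An induced idempotent splits in $B-\modu$, hence in $\mathfrak{B}-\modu$ after applying $\pi^*_{\mathfrak{B}}$, and the splitting is transported back along the conjugation. If you want to salvage your direct approach, you would have to allow nonzero components $\iota_\varphi$, $\pi_\varphi$ and construct the $B$-action on $Y$ by hand, which essentially amounts to redoing this conjugation argument.
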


\begin{proof}
We show that every idempotent $e$ in $\mathfrak{B}-\modu$ is conjugate to an idempotent in the image of the embedding functor $\pi_{\mathfrak{B}}^*: B-\modu\to \mathfrak{B}-\modu$. If this is true, i.e. $e'=h^{-1}eh$ with $e'$ in the image, then $e'$ obviously is an idempotent and splits (first in $B-\modu$, and by applying the embedding functor also in $\mathfrak{B}-\modu$). This means that there exist $f, g$ morphisms in $\mathfrak{B}-\modu$ with $gf=e'$ and $fg=\mathbbm{1}_Y$ for some $Y$. Hence $e=(hg)(fh^{-1})$ and $(fh^{-1})(hg)=\mathbbm{1}_Y$. Thus, $e$ also splits.\\
To show the conjugacy we again use induction and the following claim:
\begin{description}
\item[Claim] Suppose $e_q=0$ for all $q\in Q^1_{\nu\leq k}$, then there exists an isomorphism $h:Z\to X$ in $\mathfrak{B}-\modu$ with $(h^{-1}eh)_\varphi=0$ for all $\varphi\in Q^1_{\nu\leq k+1}$. 
\end{description}
Since $e$ is an idempotent, also $e_\mathtt{i}$ is an idempotent for all $\mathtt{i}$ as $e_{\mathtt{i}}^2=(e^2)_\mathtt{i}=e_\mathtt{i}$. For $\varphi\in Q^1_{\nu\leq k+1}$ we have $e_\varphi=(e^2)_\varphi=e_{t(\varphi)}e_\varphi+e_\varphi e_{s(\varphi)}$. By multiplying with $e_{t(\varphi)}$ and $e_{s(\varphi)}$ from the left and right, respectively, we get $e_{t(\varphi)}e_\varphi e_{s(\varphi)}=0$.\\
Using Proposition \ref{definemorphism} with $h_\mathtt{i}=\mathbbm{1}_{X(\mathtt{i})}$ and $h_\varphi=e_\varphi(2e_{s(\varphi)}-\mathbbm{1}_{X(s(\varphi))})$  we get an isomorphism $h:Z\to X$. Let $g:=h^{-1}:X\to Z$. Then $g_\mathtt{i}=\mathbbm{1}_{X(\mathtt{i})}$ and, since for $\varphi\in Q^1_{\nu\leq k+1}$ we have $0=(gh)_\varphi=g_{t(\varphi)}h_\varphi+g_\varphi h_{s(\varphi)}+\sum_{(\varphi)}b_3g_{\varphi_1} b_2h_{\varphi_2}b_1=h_\varphi+g_\varphi$ we obtain for $\varphi\in Q^1_{\nu\leq k+1}$ that $g_\varphi=-h_\varphi$. Thus, for $\varphi\in Q^1_{\nu\leq k+1}$ we have:
\begin{align*}
(geh)_\varphi&=g_{t(\varphi)}(eh)_\varphi+g_\varphi (eh)_{s(\varphi)}\\
&=g_{t(\varphi)}(e_{t(\varphi)}h_\varphi+e_\varphi h_{s(\varphi)})-h_\varphi e_{s(\varphi)}\\
&=e_{t(\varphi)}e_\varphi(2e_{s(\varphi)}-\mathbbm{1}_{X(s(\varphi))})+e_\varphi-(e_\varphi(2e_{s(\varphi)}-\mathbbm{1}_{X(s(\varphi))}))e_{s(\varphi)}\\
&=-e_{t(\varphi)}e_\varphi+e_\varphi-2e_\varphi e_{s(\varphi)}+e_\varphi e_{s(\varphi)}\\
&=(e-e^2)_\varphi=0.
\end{align*}
\end{proof}

\begin{defn}
Let $\mathcal{C}$ be an additive $\mathbbm{k}$-category with a class of \emphbf{exact pairs}, i.e. pairs of morphisms $(f,g)$ with $g$ a cokernel of $f$ and $f$ a kernel of $g$, in $\mathcal{C}$ closed under isomorphisms. The maps $f$ (resp. the maps $g$) in exact pairs $(f,g)$ are called \emphbf{inflations} (resp. \emphbf{deflations}). Then $\mathcal{C}$ is called an \emphbf{exact category} iff
\begin{enumerate}
\item[(E1)] The composition of deflations is a deflation.
\item[(E2)] For each morphism $Z'\to Z$ and each deflation $d:Y\to Z$, there exists a morphism $f':Y'\to Y$ and a deflation $d':Y'\to Z$ such that $df'=fd'$.
\item[(E3)] Identities are deflations. If $gf$ is a deflation, then so is $g$.
\item[(E3)$^{op}$] Identities are inflations. If $gf$ is an inflation, then so is $f$.
\end{enumerate}
\end{defn}

We will later prove that the category $\mathfrak{B}-\modu$ is an exact category with the following notion of exact pairs:

\begin{defn}
Let $f:X\to Y$, $g:Y\to Z$ be morphisms in $\mathfrak{B}-\modu$. Then $(f,g)$ is called an \emphbf{exact pair} if $gf=0$ and the sequence $0\to X(\mathtt{i})\stackrel{f_\mathtt{i}}{\to} Y(\mathtt{i})\stackrel{g_\mathtt{i}}{\to} Z(\mathtt{i})\to 0$ is (split) exact for every $\mathtt{i}$. \\
Furthermore define an equivalence relation on the exact pairs starting in $X$ and ending in $Z$ by setting $(f,g)\sim (f',g')$ if there exists an isomorphism $h:Y\to Y'$ with $f'=hf$ and $g'=gh^{-1}$.
\end{defn}

\begin{lem}
Let $\mathfrak{B}$ be a box constructed in Lemma \ref{boxconstruction}. Every exact pair is equivalent to one of the form $\pi_\mathfrak{B}^*(E)$ for an exact sequence $E$ in $B-\modu$.
\end{lem}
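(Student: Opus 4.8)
Write the exact pair as $X\xrightarrow{f}Y\xrightarrow{g}Z$, so that for every $\mathtt{i}$ the sequence $0\to X(\mathtt{i})\xrightarrow{f_\mathtt{i}}Y(\mathtt{i})\xrightarrow{g_\mathtt{i}}Z(\mathtt{i})\to 0$ is split exact and $gf=0$. Call a morphism in $\mathfrak{B}-\modu$ \emph{strict} if all its components indexed by $\overline{Q}$ vanish. By the description of morphisms in Subsection \ref{boxmorphismswithgenerators}, a strict morphism is $\pi_{\mathfrak{B}}^*$ of a $B$-module homomorphism: once the $\overline{Q}$-components are zero, the condition $\Phi_f(p(d(b)))=0$ collapses to the statement that the family of $\omega_\mathtt{i}$-components commutes with the $B$-action; and conversely $\pi_{\mathfrak{B}}^*$ lands in the strict morphisms. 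The strategy is therefore to find an isomorphism carrying $(f,g)$ to an equivalent pair both of whose maps are strict.

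First I would make $f$ strict. Each $f_\mathtt{i}$ is a split monomorphism, so Lemma \ref{makeinduced} produces an isomorphism $h\colon Y\to Y'$ in $\mathfrak{B}-\modu$ with $(hf)_\varphi=0$ for all $\varphi\in\overline{Q}$; replacing $(f,g)$ by the equivalent pair $(hf,gh^{-1})$ I may assume $f$ itself is strict, $f=\pi_{\mathfrak{B}}^*(\iota)$ with $\iota\colon X\to Y$ a monomorphism of $B$-modules whose cokernel is componentwise split. A small but crucial observation is that strictness of $f$ together with $gf=0$ forces \emph{every} component $g_q$ ($q\in Q^1$) to vanish on $\operatorname{im}\iota$, since $(gf)_q$ reduces — using that $f$ is strict — to $g_q\circ f_{s(q)}$.

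Next I would make $g$ strict without disturbing $f$. Each $g_\mathtt{i}$ is a split epimorphism, so the epimorphism case of Lemma \ref{makeinduced} (with $h'_\mathtt{i}=\mathbbm{1}$ as in its proof) yields an isomorphism $h'\colon Z'\to Y$ with $(gh')_\varphi=0$ for all $\varphi$, that is $gh'=\pi_{\mathfrak{B}}^*(\rho)$ with $\rho\colon Z'\to Z$ a componentwise split epimorphism of $B$-modules. I then pass to the equivalent pair $\big((h')^{-1}f,\,gh'\big)$. Here the point is that $(h')^{-1}f$ is still strict: an induction on the valuation $\nu$, using the explicit construction of $h'$, shows that all $\overline{Q}$-components of $h'$, hence of $(h')^{-1}$, vanish on $\operatorname{im}\iota$ (they are assembled from the $g_q$, which vanish there), so that $((h')^{-1}f)_\varphi=(h')^{-1}_\varphi\circ f_{s(\varphi)}$ vanishes. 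Writing $(h')^{-1}f=\pi_{\mathfrak{B}}^*(\iota')$, the $B$-module maps $\iota',\rho$ satisfy $\rho\iota'=0$ (all $\mathtt{i}$-components are $g_\mathtt{i}f_\mathtt{i}=0$), and $0\to X(\mathtt{i})\to Z'(\mathtt{i})\to Z(\mathtt{i})\to 0$ is exact for each $\mathtt{i}$; hence $E\colon 0\to X\xrightarrow{\iota'}Z'\xrightarrow{\rho}Z\to 0$ is exact in $B-\modu$, and $(f,g)\sim\big((h')^{-1}f,gh'\big)=\pi_{\mathfrak{B}}^*(E)$.

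I expect the main obstacle to be exactly the claim that strictifying $g$ does not re-introduce $\overline{Q}$-components into $f$ — that is, the valuation induction showing that the $\overline{Q}$-components of $h'$ and $(h')^{-1}$ annihilate $\operatorname{im}\iota$. This requires unwinding the composition formula $(fg)_\varphi=f_{t(\varphi)}g_\varphi+f_\varphi g_{s(\varphi)}+\sum_{(\varphi)}b_3f_{\varphi_2}b_2g_{\varphi_1}b_1$ and the corresponding formula for the components of an inverse morphism, and verifying that $\operatorname{im}\iota$ (a submodule for $Y$) remains a submodule, with the same action, for the intermediate representations produced along the construction of $h'$. Everything else — strict morphisms arising from $\pi_{\mathfrak{B}}^*$, componentwise exactness yielding exactness in $B-\modu$, and assembling the equivalence via $h$ and $h'$ — is routine bookkeeping.
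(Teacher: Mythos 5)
Your strategy is essentially the paper's, and your ``crucial observation'' is exactly the paper's key computation: after strictifying $f$ (so $f=\pi_{\mathfrak{B}}^*(\iota)$), the relation $gf=0$ reduces, via the composition formula and strictness of $f$, to $g_\varphi f_{s(\varphi)}=0$, i.e.\ the $\overline{Q}$-components of $g$ kill $\im\iota$. Where you diverge is in how the second normalization is organized. You apply Lemma \ref{makeinduced} to $g$ as a black box, obtaining one global isomorphism $h'$, and must then prove separately that $(h')^{-1}f$ is still strict; this forces you to show that the $\overline{Q}$-components of $h'$ \emph{and of its inverse} annihilate $\im\iota$, and that all intermediate representations (whose $B$-actions are corrected by terms $\Phi_{h^{(j)}}(\partial a)$) still contain $\im\iota$ with the unchanged action. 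That statement is true and your sketch points in the right direction, but it is precisely the delicate part and you leave it unproved: the later correction components are built from the already-modified morphisms $gh^{(1)}\cdots h^{(j)}$, not from $g$ itself, and the components of $(h')^{-1}$ are not simply $-h'_\varphi$ but satisfy a recursion with extra summands, so the induction must carry the submodule/annihilation statement through every step and through the inverse.

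The paper sidesteps all of this by interleaving the two normalizations in a single induction on the valuation: at each level it replaces the pair by the equivalent pair $(hf,gh^{-1})$ with $h_{\mathtt{i}}=\mathbbm{1}$ and $h_\varphi=g'_{t(\varphi)}g_\varphi$, and both verifications are one-liners for the \emph{current} pair: $(hf)_\varphi=h_\varphi f_{s(\varphi)}=g'_{t(\varphi)}g_\varphi f_{s(\varphi)}=0$ since $(gf)_\varphi=g_\varphi f_{s(\varphi)}=0$, and $(gh^{-1})_\varphi=-g_{t(\varphi)}h_\varphi+g_\varphi=0$ using $(h^{-1})_\varphi=-h_\varphi$ in valuations $\leq k+1$ (the higher correction terms vanish by the induction hypothesis). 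Because $gf=0$ and strictness of $f$ are restored after every step, the vanishing on the image is re-derived each time and never has to be transported through intermediate module structures or a global inverse. So your route can be completed, and it buys a nice conceptual statement about $h'$ fixing the subobject $\im\iota$; but as written the proof is incomplete exactly at the point you flag, whereas the interleaved induction makes that obstacle evaporate.
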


\begin{proof}
Let $X\stackrel{f}{\to} Y\stackrel{g}{\to} Z$ be an exact pair. Then, by Proposition \ref{definemorphism} there exists an isomorphism $h:Y\to Y'$ such that the following diagram commutes:
\[\begin{xy}
\xymatrix{
X\ar[r]^{f}\ar@<2pt>@{-}[d]\ar@<-2pt>@{-}[d] &Y\ar[r]^{g}\ar[d]^h &Z\ar@<2pt>@{-}[d]\ar@<-2pt>@{-}[d]\\
X\ar[r]^{f'} &Y'\ar[r]^{gh^{-1}} &Z,
}
\end{xy}\]
where $f'$ is a morphism with $f'_\mathtt{i}=f_\mathtt{i}$ for all $\mathtt{i}$ and $f_\varphi=0$ for all $\varphi\in \overline{Q}$. Thus, we can and will assume that $f_\varphi=0$ for all $\varphi\in \overline{Q}$. The proof follows by induction from the following claim:
\begin{description}
\item[Claim] If $g_\varphi=0$ for all $\varphi\in Q^1_{\nu\leq k}$, then the exact pair is equivalent via $h$ to a pair $(h^{-1}f)$ and $(gh)$ with $(h^{-1}f)_\varphi=0$ for all $\varphi \in \overline{Q}$ and $(gh)_\varphi=0$ for all $\varphi\in Q^1_{\nu\leq k+1}$.
\end{description}
Using Proposition \ref{definemorphism} let $h:Y\to Y'$ be the morphism given by $h_\mathtt{i}=\mathbbm{1}_{Y(\mathtt{i})}$ and $h_\varphi=g_\mathtt{i}'g_\varphi$, where $g_\mathtt{i}'$ is a right inverse of $g_\mathtt{i}$. We have to show (a) that $(hf)_\varphi=0$ for all $\varphi\in \overline{Q}$ and (b) that $(gh^{-1})_\varphi=0$ for all $\varphi\in Q^1_{\nu\leq k+1}$.\\
For (a) note that since $0=gf$ we have for $\varphi\in Q^1_{\nu\leq k+1}$ that $0=(gf)_\varphi=g_{t(\varphi)}f_\varphi+g_\varphi f_{s(\varphi)}=g_\varphi f_{s(\varphi)}$. Thus, we have $(hf)_\varphi=h_{t(\varphi)}f_\varphi+h_\varphi f_{s(\varphi)}=g_{t(\varphi)}'g_\varphi f_{s(\varphi)}=0$. For (b) note that since $hh^{-1}=\mathbbm{1}_{Y'}$ we have that $0=(hh^{-1})_\varphi=h_{t(\varphi)}(h^{-1})_\varphi+h_\varphi (h^{-1})_{s(\varphi)}=(h^{-1})_\varphi+h_\varphi$ for all $\varphi\in Q^1_{\nu\leq k+1}$. Thus, we have
\begin{align*}
(gh^{-1})_\varphi&=g_{t(\varphi)}(h^{-1})_\varphi+g_\varphi (h^{-1})_{s(\varphi)}\\
&=-g_{t(\varphi)}h_\varphi+g_\varphi=-g_{t(\varphi)}g'_{t(\varphi)}g_\varphi+g_\varphi=0
\end{align*}
\end{proof}

Next, we show that our notion of an exact pair coincides with the usual notion in exact categories:

\begin{lem}
Let $X\stackrel{f}{\to} Y\stackrel{g}{\to} Z$ be an exact pair in $\mathfrak{B}-\modu$. Then $f$ is a kernel of $g$ and $g$ is a cokernel of $f$.
\end{lem}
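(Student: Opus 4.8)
The plan is to exploit the two structural facts just proved: that every exact pair is equivalent to one of the form $\pi_{\mathfrak{B}}^*(E)$, and the explicit componentwise description of morphisms in $\mathfrak{B}-\modu$.

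First I would reduce to the case $(f,g)=\pi_{\mathfrak{B}}^*(E)$ for a short exact sequence $E\colon 0\to X\xrightarrow{f}Y\xrightarrow{g}Z\to 0$ in $B-\modu$, which by the preceding lemma is no loss of generality: the equivalence of exact pairs is witnessed by an isomorphism $h\colon Y\to Y'$ with $f'=hf$ and $g'=gh^{-1}$, and being a kernel (respectively a cokernel) is unaffected by composing with isomorphisms. For such a pair one has $f_\varphi=0=g_\varphi$ for all $\varphi\in\overline{Q}$; each $f_{\mathtt{i}}$ is injective with $\operatorname{im}f_{\mathtt{i}}=\ker g_{\mathtt{i}}$; each $g_{\mathtt{i}}$ is surjective with $\operatorname{coker}f_{\mathtt{i}}\cong Z(\mathtt{i})$ via $g_{\mathtt{i}}$; and, most importantly, the box relations of $f$ and of $g$ reduce to ordinary $B$-linearity, e.g. $f_{t(b)}X(b)=Y(b)f_{s(b)}$ for every arrow $b\in Q^0$, hence for every element of $B$.

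Then I would verify the universal property of the kernel. Since $gf=0$ holds by definition of an exact pair, let $t\colon T\to Y$ be any morphism in $\mathfrak{B}-\modu$ with $gt=0$. Using the composition formula together with $g_\varphi=0$ one gets $(gt)_{\mathtt{i}}=g_{\mathtt{i}}t_{\mathtt{i}}$ and $(gt)_q=g_{t(q)}t_q$ for $q\in Q^1$, so $g_{\mathtt{i}}t_{\mathtt{i}}=0$ and $g_{t(q)}t_q=0$; hence every $t_{\mathtt{i}}$ and every $t_q$ takes values in $\ker g_{\bullet}=\operatorname{im}f_{\bullet}$ and factors uniquely as $t_{\mathtt{i}}=f_{\mathtt{i}}s_{\mathtt{i}}$, $t_q=f_{t(q)}s_q$ for $\mathbbm{k}$-linear maps $s_{\mathtt{i}},s_q$. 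The heart of the argument is to check that $s=(s_q)_{q\in Q^1}$ is a morphism in $\mathfrak{B}-\modu$, i.e. that $\Phi_s(p\,d(b))=0$ for all $b\in Q^0$: expanding $\Phi_s(p\,d(b))$ as in Proposition \ref{definemorphism}, then using the $B$-linearity identities for $f$ and $f_\varphi=0$, one rewrites the vanishing expression $\Phi_t(p\,d(b))=0$ as $f_{t(b)}$ applied to the expression computing $\Phi_s(p\,d(b))$; injectivity of $f_{t(b)}$ then forces $\Phi_s(p\,d(b))=0$. By construction $fs=t$, and $s$ is unique because all the $f_{\mathtt{i}}$ and $f_{t(q)}$ are injective; this is exactly the universal property of $f=\ker g$.

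The statement that $g$ is a cokernel of $f$ is entirely dual: given $u\colon Y\to T$ with $uf=0$, surjectivity of the $g_{\mathtt{i}}$ and the identification $\operatorname{coker}f_{\mathtt{i}}\cong Z(\mathtt{i})$ produce a unique componentwise candidate $v=(v_q)$, whose box relations follow from the $B$-linearity of $g$ together with surjectivity of the $g_{\mathtt{i}}$, after which $vg=u$ and $v$ is unique. The only non-formal step is the verification, in each case, that the componentwise map indeed satisfies the box relations; that is precisely where the reduction to $\pi_{\mathfrak{B}}^*(E)$ does the work, since it collapses the relations of $f$ and $g$ to plain $B$-linearity and lets injectivity (respectively surjectivity) of the vertex maps finish the job.
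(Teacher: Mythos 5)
Your proposal is correct and follows essentially the same route as the paper: reduce to an induced pair $\pi_{\mathfrak{B}}^*(E)$ via the preceding lemma, factor the test morphism componentwise through $f$ using $g_\varphi=0$, and verify the box relations of the induced family by rewriting $\Phi_t(p\,d(b))=0$ as $f$ applied to $\Phi_s(p\,d(b))$ and invoking injectivity (the paper phrases this as multiplying by a left inverse of $f$), with the cokernel statement handled dually.
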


\begin{proof}
It suffices to prove that the exact pairs of the form $\pi^*_\mathfrak{B}(E)$ satisfy this property since the notion of being a kernel or a cokernel does not change under isomorphism.\\
So suppose an exact pair $X\stackrel{f}{\to} Y\stackrel{g}{\to} Z$ is induced from $B-\modu$ and suppose there is a morphism $n:N\to Y$ in $\mathfrak{B}-\modu$ with $gn=0$. Then $g_\mathtt{i}n_\mathtt{i}=0$ and $0=(gn)_\varphi=g_{t(\varphi)}n_\varphi+g_\varphi n_{s(\varphi)}+\sum_{(\varphi)}b_3g_{\varphi_2}b_2n_{\varphi_1}b_1=g_{t(\varphi)}n_\varphi$ since $g$ is induced. Thus, since $f_\mathtt{i}$ is a kernel of $g_\mathtt{i}$ there are unique maps $l_\mathtt{i}$ and $l_\varphi$ such that $f_\mathtt{i}l_i=n_i$ and $f_{s(\varphi)}l_\varphi=n_\varphi$. It remains to prove that these $l_i, l_\varphi$ define a morphism in $\mathfrak{B}-\modu$. For this, note that $n$ is a morphism in $\mathfrak{B}-\modu$. Hence if we denote $\partial b= \sum_{(b)} b_2\varphi b_1$ the following equality holds: $0=\Phi_n(pd(b))=f_{t(a)}l_{t(a)}b-bf_{s(a)}l_{s(a)}+\sum_{(b)} b_2 f_{s(\varphi)}l_\varphi b_1$. Since $f$ is in fact a morphism in $B-\modu$, it commutes with elements of $b$. Hence we can multiply with a left inverse of $f$ and get the equality $0=\Phi_l(pd(b))$.  
\end{proof}

\begin{lem}
The deflations (respectively inflations) are exactly those morphisms $g$, such that $g_{\mathtt{i}}$ is an epimorphism (respectively monomorphism) for all $\mathtt{i}$.
\end{lem}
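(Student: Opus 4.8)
The plan is to prove both inclusions, doing the deflation case in detail and deducing the inflation case by duality. One direction is free: if $g$ is a deflation it occurs in an exact pair $(f,g)$, and the definition of an exact pair forces each $0\to X(\mathtt{i})\xrightarrow{f_\mathtt{i}}Y(\mathtt{i})\xrightarrow{g_\mathtt{i}}Z(\mathtt{i})\to 0$ to be (split) exact, so every $g_\mathtt{i}$ is an epimorphism; dually every inflation has all components monomorphisms.

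For the converse I would start from $g\colon Y\to Z$ in $\mathfrak{B}-\modu$ with all $g_\mathtt{i}$ surjective and first normalise it. Applying Lemma~\ref{makeinduced} to $g$ produces $Z'\in\mathfrak{B}-\modu$ and a morphism $h\colon Z'\to Y$ with all $h_\mathtt{i}$ isomorphisms and $(gh)_\varphi=0$ for every $\varphi\in\overline{Q}$; by the lemma characterising isomorphisms $h$ is an isomorphism, and since the class of exact pairs is closed under isomorphism, $g$ is a deflation exactly when $gh$ is. So I may replace $g$ by $gh$ and assume $g_\varphi=0$ for all $\varphi\in\overline{Q}$.

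Under that assumption the defining relations $\Phi_g(pd(b))=0$, $b\in Q^0$, collapse --- each constituent of $\partial b$ carries a factor in $\overline{Q}$, which $g$ annihilates --- to $g_{t(b)}Y(b)=Z(b)g_{s(b)}$, so the family $(g_\mathtt{i})$ is a homomorphism $g_0\colon Y\to Z$ of $B$-modules with $g=\pi_\mathfrak{B}^*(g_0)$. Since $B-\modu$ is abelian and each $(g_0)_\mathtt{i}$ is onto, $g_0$ is an epimorphism; setting $X_0:=\ker g_0$ gives an exact sequence $E\colon 0\to X_0\xrightarrow{\iota_0}Y\xrightarrow{g_0}Z\to 0$ in $B-\modu$. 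I would then check that $\pi_\mathfrak{B}^*(E)=(\pi_\mathfrak{B}^*\iota_0,\pi_\mathfrak{B}^*g_0)$ is an exact pair: functoriality gives $(\pi_\mathfrak{B}^*g_0)(\pi_\mathfrak{B}^*\iota_0)=\pi_\mathfrak{B}^*(g_0\iota_0)=0$, and since $\pi_\mathfrak{B}^*$ changes neither objects nor the vertex components of morphisms, this pair evaluated at any vertex $\mathtt{i}$ is just $E$ evaluated at $\mathtt{i}$, a short exact sequence of $\mathbbm{k}$-vector spaces, hence split. Thus $g=\pi_\mathfrak{B}^*(g_0)$ is a deflation, and hence so is the original morphism.

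The inflation case is dual: for $f$ with all $f_\mathtt{i}$ injective, the monomorphism half of Lemma~\ref{makeinduced} lets me assume, after composing with an isomorphism, that $f_\varphi=0$ for all $\varphi\in\overline{Q}$, so $f=\pi_\mathfrak{B}^*(f_0)$ for a monomorphism $f_0$ in $B-\modu$, and $\pi_\mathfrak{B}^*$ of the exact sequence $0\to X\xrightarrow{f_0}Y\to\coker f_0\to 0$ exhibits $f$ as an inflation. I expect the only real obstacle to be conceptual rather than computational: $\mathfrak{B}-\modu$ is not abelian, so one cannot take kernels or cokernels inside it, and the whole point of invoking Lemma~\ref{makeinduced} is to transport the morphism, up to isomorphism, into the image of $\pi_\mathfrak{B}^*$ where the abelian category $B-\modu$ does supply the needed exact sequence; making sure this transfer is compatible with the box structure --- that the normalised morphism genuinely comes from a $B$-module map and that $\pi_\mathfrak{B}^*$ turns short exact sequences into exact pairs --- is the step requiring care.
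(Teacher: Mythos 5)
Your argument is correct and is essentially the paper's own proof: the easy direction follows from the definition of an exact pair, and for the converse both you and the paper use Lemma \ref{makeinduced} together with the characterisation of isomorphisms to replace $g$ (up to isomorphism) by a morphism in the image of $\pi_\mathfrak{B}^*$, take the kernel in $B-\modu$, and transport the resulting short exact sequence back to an exact pair $(hf',g)$. Your direct verification that $\pi_\mathfrak{B}^*$ of a short exact sequence in $B-\modu$ is an exact pair is only a cosmetic variation on the paper's appeal to the preceding lemma.
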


\begin{proof}
We only prove the case of deflations, the other case follows by dual arguments. Let $g:Y\to Z$ be a morphism with all $g_\mathtt{i}$ being epimorphisms. Then by Lemma \ref{makeinduced} there exists an isomorphism $h:Y'\to Y$ with $g':=gh$ being induced and $(gh)_\mathtt{i}=g_\mathtt{i}$ for all $\mathtt{i}$. Let $f'$ be the kernel of $g'$ in $B-\modu$. By the preceding lemma, by embedding $f'$ is also the kernel of $g'$ in $\mathfrak{B}-\modu$. Then defining $f:=hf'$ gives an exact pair $(f,g)$, and hence $g$ is a deflation.
\end{proof}

\begin{thm}
Let $\mathfrak{B}$ be a box constructed in Lemma \ref{boxconstruction}. Then $\mathfrak{B}-\modu$ is an exact category.
\end{thm}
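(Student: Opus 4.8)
The plan is to verify directly the four axioms (E1), (E2), (E3) and (E3)$^{op}$ for the class of exact pairs introduced above. Closure of that class under isomorphisms is part of its definition, and it has already been shown that in an exact pair the first map is a kernel of the second and the second a cokernel of the first, so only the four axioms remain to be checked. The whole verification will rest on two facts established earlier: the characterisation of deflations (respectively inflations) as precisely those morphisms $g$ all of whose vertex components $g_\mathtt{i}$ are epimorphisms (respectively monomorphisms), and the identity $(gf)_\mathtt{i}=g_\mathtt{i}f_\mathtt{i}$, which says that passing to vertex components is functorial.

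Granting these, the axioms (E1), (E3) and (E3)$^{op}$ are immediate. If $g$ and $g'$ are composable deflations, then each $(g'g)_\mathtt{i}=g'_\mathtt{i}g_\mathtt{i}$ is a composition of surjections, hence surjective, so $g'g$ is a deflation; this is (E1). Every identity morphism has isomorphisms as its vertex components, so it is both a deflation and an inflation; and if $gf$ is a deflation, then each $g_\mathtt{i}f_\mathtt{i}=(gf)_\mathtt{i}$ is surjective, which forces $g_\mathtt{i}$ to be surjective, so $g$ is a deflation — this is (E3). Axiom (E3)$^{op}$ follows by the same argument with ``surjective'' replaced by ``injective'' and $g_\mathtt{i}$ by $f_\mathtt{i}$.

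For (E2) — base change of a deflation $d\colon Y\to Z$ along an arbitrary morphism $\phi\colon Z'\to Z$ — I would use the difference-map construction of the pullback. Since $\mathfrak{B}-\modu$ is additive, the morphism $(d,-\phi)\colon Y\oplus Z'\to Z$ given by $d\circ\mathrm{pr}_Y-\phi\circ\mathrm{pr}_{Z'}$ is defined, and its vertex component $(y,z')\mapsto d_\mathtt{i}(y)-\phi_\mathtt{i}(z')$ is surjective because $d_\mathtt{i}$ is; by the characterisation of deflations it is therefore itself a deflation, hence the right-hand map of some exact pair $Y'\xrightarrow{j}Y\oplus Z'\xrightarrow{(d,-\phi)}Z$. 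I would then set $f':=\mathrm{pr}_Y\circ j\colon Y'\to Y$ and $d':=\mathrm{pr}_{Z'}\circ j\colon Y'\to Z'$. Bilinearity of composition together with $(d,-\phi)\circ j=0$ gives $d\circ f'-\phi\circ d'=(d,-\phi)\circ j=0$, i.e.\ $df'=\phi d'$; and since vertex-wise $j_\mathtt{i}$ identifies $Y'(\mathtt{i})$ with $\{(y,z')\mid d_\mathtt{i}(y)=\phi_\mathtt{i}(z')\}$, surjectivity of $d_\mathtt{i}$ forces $d'_\mathtt{i}=\mathrm{pr}_{Z'}\circ j_\mathtt{i}$ to be surjective for every $\mathtt{i}$, so $d'$ is a deflation. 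This is exactly (E2), and it completes the verification of the theorem.

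Rather than a single hard step inside this proof, the point is that all of the substantive work already lies in the lemmas preceding the theorem: the normal-form results, the characterisation of (de)flations as the vertex-wise epimorphisms and monomorphisms, the identification of exact pairs with kernel--cokernel pairs, and their reduction to sequences induced from $B-\modu$. Given those, the theorem is a formal assembly; the only place where a little care is needed is (E2), where one must notice that the difference map $(d,-\phi)$ is again a deflation and that the pullback projection $d'$ inherits vertex-wise surjectivity from $d$.
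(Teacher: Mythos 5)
Your proposal is correct and follows essentially the same route as the paper: (E1), (E3) and (E3)$^{op}$ are read off from the characterisation of deflations and inflations as the vertex-wise epimorphisms and monomorphisms together with $(gf)_\mathtt{i}=g_\mathtt{i}f_\mathtt{i}$, and (E2) is obtained, exactly as in the paper, by observing that the map $Y\oplus Z'\to Z$ is vertex-wise surjective, hence a deflation fitting into an exact pair, whose kernel yields the pullback square with $d'$ vertex-wise surjective (the sign placement in $(d,-\phi)$ versus the paper's $(d,f)$ with kernel $\begin{pmatrix}-f'\\ d'\end{pmatrix}$ is immaterial). No gaps; nothing further is needed.
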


\begin{proof}
The properties (E1), (E3) and (E3)$^{op}$ follow immediately from the foregoing lemma.\\
For (E2) let $X\stackrel{i}{\to} Y\stackrel{d}{\to} Z$ be an exact pair and $f:Z'\to Z$ be a morphism in $\mathfrak{B}-\modu$. Then by the preceding lemma  since $d$ is a deflation, also $(d,f):Y\oplus Z'\to Z$ is a deflation. Thus there exists an exact pair $(\begin{pmatrix}-f'\\d'\end{pmatrix}, (d,f))$. In particular $d_\mathtt{i}f'_\mathtt{i}=f_\mathtt{i}d'_\mathtt{i}$ gives a pullback-diagram in $B-\modu$. Hence $d'$ ($d'_\mathtt{i}$ being parallel to $d_\mathtt{i}$) is a deflation by the foregoing lemma. Thus, we have proved (E2).  
\end{proof}

In the next section we will construct an exact functor $R\otimes_B -$ which allows us to see this exact category as the full subcategory of an abelian category with the exact structure induced from that.

\section{Boxes: Burt-Butler theory}\label{section9}

In this section, we are going to review the relevant part of the results by Burt and Butler; the definition of the left and right Burt-Butler algebras (just called $L$ and $R$ by Burt and Butler), basic properties of these algebras and the description of the category of representations of a box as categories of induced or coinduced modules inside the module categories of the left and right algebras, respectively. We do not need any facts from the sophisticated representation theory of boxes, only the definition and some basic techniques on coalgebras and from homological algebra. All material is taken from \cite{BB91}; another exposition of the same material is contained in \cite{Bur05}. Throughout this section, $B$ is assumed to be a basic finite dimensional algebra. We have already seen that we can regard this as a category by fixing an isomorphism $B\cong \mathbb{L}[Q]/I$.

\subsection{Left and right Burt-Butler algebras}

The left or right $B$-module $B$ is, by definition, a left or right representation of the box $\mathfrak{B}$ (here right representation is defined by the obvious 'dual' of left representation) and as such has endomorphism rings. These turn out to be exactly the algebras we are looking for. These have been studied by Burt and Butler \cite{BB91} under the name left and right algebra of a box, respectively.

\begin{defn}\index{left/right Burt-Butler algebra $L_\mathfrak{B}$, $R_\mathfrak{B}$}
The \emphbf{left Burt-Butler algebra} $L_\mathfrak{B}=L$ of a given box $\mathfrak{B}=(B,W)$ is the endomorphism ring $\End_{\mathfrak{B}^{op}}(B)\cong \Hom_B(W_B,B_B)$ (compare \eqref{eq:adjunctionformula} on page \pageref{adjunction})  of the right module $B$, i.e. with multiplication $e \cdot f: W \stackrel{\mu}{\rightarrow} W \otimes W  \stackrel{f \otimes 1}{\rightarrow} B \otimes W \simeq W \stackrel{e}{\rightarrow} B$ for $e,f\in L$.

The \emphbf{right Burt-Butler algebra} $R_\mathfrak{B}=R$ of a given box $\mathfrak{B}$ is the algebra $\End_{\mathfrak{B}}(B)^{op}\cong \Hom_B({}_BW,{}_BB)$ (compare \eqref{eq:adjunctionformula} on page \pageref{adjunction}) of the left module $B$, i.e. with multiplication $e \cdot f: W \stackrel{\mu}{\rightarrow} W \otimes W \stackrel{1 \otimes e}{\rightarrow} W \otimes B \simeq W
\stackrel{f}{\rightarrow} B$ for $e,f\in L$.

In both cases, $\varepsilon$ (the counit of $\mathfrak{B}$) is the identity element of $L$ and $R$, respectively.
\end{defn}

\subsection{Representations of boxes as induced and coinduced modules}

It is proven in \cite[Proposition 1.2]{BB91} that there is an $L$-$R$-bimodule structure on $W$. This bimodule structure on ${}_LW_R$ defines functors $W \otimes_R -: R-\Mod \rightarrow L-\Mod$ and $\Hom_L(W,-): L-\Mod \rightarrow R-\Mod$, which will be seen to restrict to equivalences between certain subcategories.

\begin{defn}
The category $\Ind(B,R)$ of \emphbf{induced modules}\index{induced modules $\Ind(B,R)$} is the full subcategory of $R-\modu$ whose objects are of the form ${}_RM \simeq R \otimes_B X$ for some
finitely generated $B$-module $X$.

The category $\CoInd(B,L)$ of \emphbf{coinduced modules}\index{coinduced modules $\CoInd(B,L)$} is the full subcategory of $L-\modu$ whose objects are of the form ${}_LM \simeq \Hom_B(L,X)$ for some finitely generated $B$-module $X$.
\end{defn}

\begin{prop}[{\cite[Proposition 2.2]{BB91}}]\label{exact}
Let $\mathfrak{B}$ be a box with projective kernel. Then there is a natural isomorphism $R\otimes_B-\cong\Hom_B(W,-)$ of functors $B-\modu \to R-\modu$ and a natural isomorphism $W\otimes_B-\cong \Hom_B(L,-)$ of functors $B-\modu\to L-\modu$. In particular these functors are exact.
\end{prop}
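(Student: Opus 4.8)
The plan is to deduce everything from a single structural fact: under the projective-kernel hypothesis, $W$ is finitely generated projective \emph{both} as a left and as a right $B$-module. Once this is available, the two natural isomorphisms and the exactness statements become instances of standard tensor--hom formalism. So the first (and essentially only non-formal) step is to establish this fact. Since $\varepsilon$ is surjective, there is a short exact sequence of $B$-bimodules $0\to\overline{W}\to W\stackrel{\varepsilon}{\to}B\to 0$ with $\overline{W}=\ker\varepsilon$ a finitely generated projective $B$-bimodule. A finitely generated projective $B$-bimodule is, by restriction, finitely generated projective as a left $B$-module and as a right $B$-module, because the representable bimodules $B(\mathtt{i},-)\otimes_{\mathbbm{k}}B(-,\mathtt{j})$ restrict on either side to finite direct sums of indecomposable projective one-sided modules. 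Moreover the regular module $B$ is projective over itself on either side, so the displayed sequence splits both as a sequence of left $B$-modules and as a sequence of right $B$-modules; hence $W\cong B\oplus\overline{W}$ is finitely generated projective as a left $B$-module and, symmetrically, as a right $B$-module.

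For the isomorphism $R\otimes_B-\cong\Hom_B(W,-)$, recall that $R\cong\Hom_B({}_BW,{}_BB)$ carries the right $B$-module structure $(f\cdot b)(w)=f(w)b$, and consider the evaluation natural transformation
\[
\mathrm{ev}_X\colon R\otimes_B X\longrightarrow\Hom_B({}_BW,X),\qquad f\otimes x\longmapsto\bigl(w\mapsto f(w)x\bigr).
\]
It is well defined on $\otimes_B$ precisely because of this right action on $R$, it is manifestly natural in $X$, and, unwinding the definition of the multiplication of $R$ through $\mu$ (Section~\ref{section9}), one checks that it is a homomorphism of left $R$-modules. Dually, for the isomorphism $W\otimes_B-\cong\Hom_B(L,-)$, recall that $L\cong\Hom_B(W_B,B_B)$ carries the left $B$-module structure $(b\cdot g)(w)=b\,g(w)$, and consider the reflexivity natural transformation
\[
\rho_X\colon W\otimes_B X\longrightarrow\Hom_B({}_BL,X),\qquad w\otimes x\longmapsto\bigl(g\mapsto g(w)x\bigr),
\]
which is again well defined, natural in $X$, and, using the description of the multiplication of $L$ through $\mu$, a homomorphism of left $L$-modules. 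These verifications are routine manipulations with the definitions of Section~\ref{section9}.

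It remains to see that $\mathrm{ev}_X$ and $\rho_X$ are bijective. Both sides of $\mathrm{ev}$ are additive functors of the left $B$-module ${}_BW$, and both sides of $\rho$ are additive functors of the right $B$-module $W_B$; by the first step $W$ is, on the relevant side, a direct summand of a finite direct sum of copies of $B$, so it suffices to check the case $W=B$, where in each case the map is readily identified with the identity of $X$. Finally, $\Hom_B({}_BW,-)$ is exact since ${}_BW$ is projective, hence $R\otimes_B-$ is exact; and $W\otimes_B-$ is exact since $W_B$ is projective, hence flat, so that $\Hom_B({}_BL,-)$ is exact as well. The decisive point is thus the first step: one-sided projectivity of $W$ is exactly what the hypothesis of projective kernel provides, and everything afterwards is bookkeeping.
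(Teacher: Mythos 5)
The paper gives no proof of this proposition at all --- it is quoted from Burt--Butler \cite[Proposition 2.2]{BB91} --- so there is no internal argument to diverge from; your write-up is correct and follows the standard route (essentially Burt--Butler's own): the counit sequence $0\to\overline{W}\to W\stackrel{\varepsilon}{\to}B\to 0$ splits as a sequence of left and of right $B$-modules, so $W$ is finitely generated projective on either side, and then the evaluation maps $R\otimes_B X\to\Hom_B(W,X)$ and $W\otimes_B X\to\Hom_B(L,X)$ are isomorphisms by naturality and reduction to the case $W=B$, with exactness falling out of one-sided projectivity. The only expository gap is that you never state the left $R$-module structure on $\Hom_B(W,X)$ nor the left $L$-module structure on $W\otimes_B X$ with respect to which your maps are claimed to be linear; these come from the $L$-$R$-bimodule structure on $W$ of \cite[Proposition 1.2]{BB91}, namely $w\cdot e=(1\otimes e)\mu(w)$ and $e\cdot w=(e\otimes 1)\mu(w)$, and you should also note that the right $B$-action $(f\cdot b)(w)=f(w)b$ on $R$ (and dually on $L$) agrees with multiplication by the image of $b$ under $B\to R$, $b\mapsto b\varepsilon$, which is exactly what makes your ``routine'' linearity checks and the formation of $R\otimes_B X$ consistent with the statement.
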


\begin{thm}[{\cite[Theorems 2.2, 2.4, 2.5]{BB91}}]\label{inducedboxmodules}
Let $\mathfrak{B}=(B,W)$ be a box with projective kernel. The categories $\Ind(B,R)$ and $\CoInd(B,L)$ of (co-)induced modules are equivalent to the category $\mathfrak{B}-\modu$ of representations of the box $\mathfrak{B}$.
\end{thm}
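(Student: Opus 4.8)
The plan is to produce an explicit equivalence $\Phi\colon\mathfrak{B}-\modu\xrightarrow{\ \sim\ }\Ind(B,R)$ and then to treat the coinduced case by the symmetric argument. On objects I set $\Phi(X):=R\otimes_B X$; since $R$ is finite dimensional this is a finite-dimensional $R$-module, it is induced by the very definition of $\Ind(B,R)$, and conversely every object of $\Ind(B,R)$ has this form, so $\Phi$ will be essentially surjective onto $\Ind(B,R)$ for free. Thus the whole problem is to define $\Phi$ on morphisms and to see that it is a fully faithful functor.

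For the action on morphisms I would use the classical description of $\mathfrak{B}-\modu$ from page~\pageref{adjunction}: by the adjunction~\eqref{eq:adjunctionformula} a box morphism $f\in\Hom_{\mathfrak{B}}(X,Y)$ is the same as a $B$-linear map $\hat f\colon W\otimes_B X\to Y$, with composition $g\bullet f=g\circ(\mathbbm{1}_W\otimes\hat f)\circ(\mu\otimes\mathbbm{1}_X)$ and with $\varepsilon\otimes\mathbbm{1}_X$ as identity; equivalently, $\mathfrak{B}-\modu$ is the co-Kleisli category of the comonad $\mathbb{W}:=(W\otimes_B-,\ \mu\otimes-,\ \varepsilon\otimes-)$ on $B-\modu$. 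Because $\mathfrak{B}$ has projective kernel, Proposition~\ref{exact} applies and gives a natural isomorphism $R\otimes_B-\cong\Hom_B(W,-)$; combining it with the extension/restriction adjunction $(R\otimes_B-)\dashv\operatorname{Res}$ and the tensor--hom adjunction $(W\otimes_B-)\dashv\Hom_B(W,-)$ over $B$ yields a chain of natural bijections
\begin{align*}
\Hom_R(R\otimes_B X,\ R\otimes_B Y)
&\cong\Hom_B(X,\ R\otimes_B Y)\\
&\cong\Hom_B(X,\ \Hom_B(W,Y))\\
&\cong\Hom_B(W\otimes_B X,\ Y)\ =\ \Hom_{\mathfrak{B}}(X,Y).
\end{align*}
Since $\Ind(B,R)$ is a \emph{full} subcategory of $R-\modu$, its morphism spaces are exactly the $\Hom_R$ on the left, so I define $\Phi$ on morphisms as the inverse of this composite.

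I expect the functoriality of $\Phi$ to be the main obstacle. Compatibility with identities comes from the counit axiom $(\varepsilon\otimes\mathbbm{1})\mu=\mathbbm{1}_W$. Compatibility with composition is the real point: one must follow the comultiplication $\mu$ — which controls composition in $\mathfrak{B}-\modu$ — through the three adjunctions above and check that it reproduces ordinary composition of $R$-linear maps. Conceptually, this says that the functor-isomorphism $R\otimes_B-\cong\Hom_B(W,-)$ of Proposition~\ref{exact} is compatible with the relevant (co)monad structures: the $\otimes$-comonad $\mathbb{W}$ on $B-\modu$ must correspond, under the tensor--hom flip and this isomorphism, to the free-module monad $R\otimes_B-$ on $B-\modu$, whose multiplication and unit are those of $R$. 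This works precisely because $R=\Hom_B({}_BW,{}_BB)$ carries the convolution product built from $\mu$ and the unit $\varepsilon$ — exactly the multiplication written out in the definition of $R_{\mathfrak{B}}$. Unwinding this is the compatibility recorded in the $L$-$R$-bimodule ${}_LW_R$ of \cite[Proposition~1.2]{BB91}; I would cite that computation, or reproduce it by a direct diagram chase. Granting it, every arrow in the displayed chain is a bijection, so $\Phi$ is fully faithful, and with the essential surjectivity noted above this gives $\mathfrak{B}-\modu\simeq\Ind(B,R)$.

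The coinduced case is handled symmetrically. Viewing the comonad $\mathbb{W}$ through the other adjunction, $\operatorname{Res}\dashv\Hom_B(L,-)$ between $L-\modu$ and $B-\modu$, together with Proposition~\ref{exact} in its second form $W\otimes_B-\cong\Hom_B(L,-)$ (again using projective kernel), identifies the co-Kleisli category $\mathfrak{B}-\modu$ with the full subcategory of $L-\modu$ on the modules $\Hom_B(L,X)$, that is, with $\CoInd(B,L)$; the only non-formal ingredient is once more \cite[Proposition~1.2]{BB91}, now on the ${}_LW$-side, ensuring that $\mu$ is left $L$-linear and matches the multiplication of $L$. Finally, composing $\Phi$ with the inverse of this second equivalence identifies $\Ind(B,R)$ with $\CoInd(B,L)$; chasing the identifications shows that the mutually inverse equivalences realising it are precisely the functors $W\otimes_R-$ and $\Hom_L(W,-)$ attached to the bimodule ${}_LW_R$.
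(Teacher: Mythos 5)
The paper itself gives no proof of this theorem: it is imported verbatim from Burt and Butler (\cite[Theorems 2.2, 2.4, 2.5]{BB91}), so there is no internal argument to compare against; your plan is essentially a reconstruction of the Burt--Butler proof and it is sound. The chain $\Hom_R(R\otimes_BX,R\otimes_BY)\cong\Hom_B(X,R\otimes_BY)\cong\Hom_B(X,\Hom_B(W,Y))\cong\Hom_B(W\otimes_BX,Y)=\Hom_{\mathfrak{B}}(X,Y)$ is exactly where the projective-kernel hypothesis enters (through Proposition \ref{exact}), and the two points you leave to a diagram chase are genuinely the only remaining ones and do go through: (i) the restriction along $B\to R$, $b\mapsto b\varepsilon$, of the natural $R$-action on $\Hom_B(W,Y)$ agrees with the left $B$-action coming from the right $B$-structure of $W$ used in the tensor--hom step, which is precisely the counit axiom $(\varepsilon\otimes 1)\mu=\mathbbm{1}$; and (ii) functoriality, i.e.\ that the convolution product of $R$ matches co-Kleisli composition of box morphisms, which follows from coassociativity of $\mu$ and is the content of the computations in \cite{BB91} (Proposition 1.2 together with the proof of Theorem 2.2) that you cite. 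Writing out these two verifications would make the argument complete, and your concluding identification of the induced--coinduced comparison with the functors $W\otimes_R-$ and $\Hom_L(W,-)$ attached to ${}_LW_R$ is exactly how the source (and the paper's surrounding discussion) frames it.
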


\subsection{Comparing extensions}

For the 'if' part of our main theorem the following result on comparison of extensions turns out to be useful for proving that the induced modules form a standardisable set of modules:

\begin{thm}[{\cite[Corollary 3.5]{BB91}}]\label{extensions}
Suppose $\mathfrak{B}=(B,W)$ is a box with projective kernel. Then the categories $\Ind(B,R)$ and $\CoInd(B,L)$ are closed under extensions and every such extension is induced by an extension in $B-\modu$.
Furthermore there are maps
$$\Ext^n_B(X,Y)\to \Ext^n_R(R\otimes_B X, R\otimes_B Y)$$
and
$$\Ext^n_B(X,Y)\to \Ext^n_L(\Hom_B(L,X),\Hom_B(L,Y))$$
which are epimorphisms for $n=1$ and isomorphisms for $n\geq 2$.
\end{thm}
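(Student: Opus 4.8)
The plan is to deduce the whole statement from the exactness of the (co)induction functors in Proposition \ref{exact} together with a change-of-rings long exact sequence argument. First I would record the standard preliminaries. The algebra $B$ embeds as a subalgebra of $R$ via $b\mapsto(w\mapsto\varepsilon(w)b)$ — a unital algebra homomorphism by a short computation with the right counit identity $\sum w_{(1)}\varepsilon(w_{(2)})=w$, injective because $\varepsilon$ is surjective — and symmetrically $B\hookrightarrow L$. Hence $R\otimes_B-$ is left adjoint to restriction $\res\colon R-\modu\to B-\modu$ and $\Hom_B(L,-)$ is right adjoint to $\res\colon L-\modu\to B-\modu$. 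Since restriction is exact, $R\otimes_B-$ sends projective $B$-modules to projective $R$-modules and $\Hom_B(L,-)$ sends injectives to injectives; combined with the \emph{exactness} of $R\otimes_B-$ and of $\Hom_B(L,-)\cong W\otimes_B-$ from Proposition \ref{exact}, applying the functor to a projective resolution of $X$ (resp. an injective resolution of $Y$) and using the Hom-adjunction on the resulting complexes yields Eckmann--Shapiro isomorphisms
\begin{align*}
\Ext^n_R(R\otimes_B X,N)&\cong\Ext^n_B(X,\res N),\\
\Ext^n_L(M,\Hom_B(L,Y))&\cong\Ext^n_B(\res M,Y),
\end{align*}
natural in $N\in R-\modu$ and in $M\in L-\modu$ respectively.

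The decisive input is that the projective-kernel hypothesis produces two natural short exact sequences of $B$-modules,
\begin{align*}
0&\to Y\xrightarrow{\ \eta_Y\ }\res(R\otimes_B Y)\to\overline R\otimes_B Y\to 0,\\
0&\to\overline W\otimes_B X\to\res(W\otimes_B X)\to X\to 0,
\end{align*}
where $\eta_Y$ is the adjunction unit and $\overline R:=R/B$ is the cokernel of $B\hookrightarrow R$. These arise by applying $\Hom_B({}_B-,{}_BB)$ (resp. $-\otimes_B X$) to $0\to\overline W\to W\xrightarrow{\varepsilon}B\to 0$, which splits as a sequence of left $B$-modules and, separately, of right $B$-modules since $B$ is free on either side. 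The point is that $\overline W$ is a \emph{finitely generated projective bimodule}, hence a finite direct sum of bimodules $Be_{\mathtt{i}_k}\otimes_{\mathbbm k}e_{\mathtt{j}_k}B$; tensoring with $X$ turns these into a direct sum of projectives $P(\mathtt{i}_k)$, so $\overline W\otimes_B X$ is a projective $B$-module, while dualising the left tensor factor turns $\overline R\otimes_B Y$ into a finite direct sum of injectives $D(e_{\mathtt{j}_k}B)$, so $\overline R\otimes_B Y$ is an injective $B$-module. Making these sequences precise — i.e. working out the $B$-bimodule structures of $R$, $L$, $W$, checking that they restrict correctly from the ambient $R$- (resp. $L$-) module, and converting projectivity of the bimodule $\overline W$ into projectivity of $\overline W\otimes_B X$ and injectivity of $\overline R\otimes_B Y$ — is the step where the genuine box-theoretic content sits; I expect this bookkeeping to be the main obstacle, while everything flanking it is formal homological algebra.

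Granting the two sequences, the comparison maps are obtained as follows. Feeding $0\to Y\to\res(R\otimes_B Y)\to\overline R\otimes_B Y\to 0$ into $\Ext^*_B(X,-)$ and using $\Ext^{\ge 1}_B(X,\overline R\otimes_B Y)=0$ (injectivity) shows that $(\eta_Y)_*\colon\Ext^n_B(X,Y)\to\Ext^n_B(X,\res(R\otimes_B Y))$ is an isomorphism for $n\ge 2$ and an epimorphism for $n=1$; composing with the Eckmann--Shapiro isomorphism at $N=R\otimes_B Y$, and using the standard fact that this composite is precisely the comparison map induced by the functor $R\otimes_B-$, proves the claim for $R$. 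Dually, feeding $0\to\overline W\otimes_B X\to\res(W\otimes_B X)\to X\to 0$ into $\Ext^*_B(-,Y)$ and using $\Ext^{\ge 1}_B(\overline W\otimes_B X,Y)=0$ (projectivity), then composing with the Eckmann--Shapiro isomorphism at $M=\Hom_B(L,X)\cong W\otimes_B X$, proves the claim for $L$.

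Finally, closure under extensions and the statement that every such extension is induced follow from the $n=1$ case together with exactness of the functors. An extension $0\to R\otimes_B X\to E\to R\otimes_B X'\to 0$ in $R-\modu$ defines a class in $\Ext^1_R(R\otimes_B X',R\otimes_B X)$, which by the established surjectivity is the image of a class in $\Ext^1_B(X',X)$; choosing a $B$-module extension $0\to X\to E'\to X'\to 0$ representing it and applying the exact functor $R\otimes_B-$ produces an extension of $R\otimes_B X'$ by $R\otimes_B X$ with the same $\Ext^1$-class, hence (by Yoneda, $R-\modu$ being abelian) isomorphic to the original; thus $E\cong R\otimes_B E'\in\Ind(B,R)$ and the extension is induced. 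The argument for $\CoInd(B,L)$ is entirely dual, using $\Hom_B(L,-)$ and the surjectivity on $\Ext^1$ over $L$.
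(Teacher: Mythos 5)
The paper itself gives no proof of this statement: it is quoted directly from Burt--Butler \cite[Corollary 3.5]{BB91} (all of Section \ref{section9} is imported material), so there is nothing in-text to compare against, and your argument has to be judged on its own. On its own it is correct, and it is in essence the original Burt--Butler argument: adjunction along the embeddings $B\hookrightarrow R$ and $B\hookrightarrow L$, exactness of $R\otimes_B-$ and $\Hom_B(L,-)$ from Proposition \ref{exact}, Eckmann--Shapiro, and the two short exact sequences manufactured from $0\to\overline{W}\to W\xrightarrow{\varepsilon}B\to 0$. The bookkeeping you flag as the main obstacle does go through with the paper's conventions: under $R\cong\Hom_B({}_BW,{}_BB)$ your embedding $b\mapsto(w\mapsto\varepsilon(w)b)$ is exactly $\Hom_B(\varepsilon,B)$, the induced $B$-bimodule structure on $R$ is $(b\cdot f\cdot b')(w)=f(wb)b'$, and hence $\overline{R}\cong\Hom_B({}_B\overline{W},{}_BB)$ as bimodules; writing $\overline{W}\cong\bigoplus_k Be_{\mathtt{i}_k}\otimes_{\mathbbm{k}}e_{\mathtt{j}_k}B$ one gets $\overline{W}\otimes_BX\cong\bigoplus_k Be_{\mathtt{i}_k}\otimes_{\mathbbm{k}}e_{\mathtt{j}_k}X$ (projective) and $\overline{R}\otimes_BY\cong\bigoplus_k D(e_{\mathtt{j}_k}B)\otimes_{\mathbbm{k}}e_{\mathtt{i}_k}Y$ (injective), while exactness of your two displayed sequences follows since the one-sided splittings of the $W$-sequence carry over (equivalently, $\overline{W}$ and $\overline{R}$ are projective as right $B$-modules, killing the relevant $\Tor_1$). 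The one step you assert rather than check is that your composites with Eckmann--Shapiro coincide with the maps induced by the functors $R\otimes_B-$ and $\Hom_B(L,-)$; for the $L$-side this uses that $\varepsilon\otimes 1\colon W\otimes_BX\to X$ matches the adjunction counit under the isomorphism of Proposition \ref{exact}. This is routine, but it is precisely what your closure-under-extensions argument relies on (you need the $\Ext^1$-surjection to be realised by applying the exact functor to a $B$-module extension), so it should be written out rather than left implicit.
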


Furthermore the following theorem will imply Ringel's theorem about the existence of almost split sequences for the category of filtered modules:

\begin{thm}[{\cite[Theorem 4.1]{BB91}\cite[14.3,14.4 Proposition]{Bur05}}]\label{almostsplitbocses}
Suppose that $\mathfrak{B}=(B,W)$ is a box with projective kernel such that $\mathfrak{B}-\modu$ is fully additive. Then the category $\mathfrak{B}-\modu$ (and hence also the equivalent categories $\Ind(B,R)$ and $\CoInd(B,L)$) admit almost split sequences. Furthermore the functors $R\otimes_B-:B-\modu\to R-\modu$ and $\Hom_B(L,-):B-\modu\to L-\modu$ send almost split sequences with indecomposable end terms in $\mathfrak{B}-\modu$ to almost split sequences or to split sequences.
\end{thm}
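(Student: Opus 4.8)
The plan is to transfer the problem to the module category of the right Burt--Butler algebra $R$ through the equivalence $\mathfrak{B}-\modu\simeq\Ind(B,R)$ of Theorem~\ref{inducedboxmodules}, and then to \emph{construct} the relative almost split sequences by applying the exact functor $R\otimes_B-$ of Proposition~\ref{exact} to the ordinary Auslander--Reiten sequences of $B-\modu$, which exist because $B$ is a finite dimensional algebra. Everything concerning $\Hom_B(L,-)$ and $\CoInd(B,L)$ then follows by the evident dual argument, using the isomorphism $W\otimes_B-\cong\Hom_B(L,-)$ of Proposition~\ref{exact}.

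Concretely, let $0\to X\xrightarrow{f}Y\xrightarrow{g}Z\to 0$ be an almost split sequence in $B-\modu$ (so $g$ is right almost split and $Z$ is indecomposable). Since $R\otimes_B-$ is exact, $0\to R\otimes_B X\xrightarrow{R\otimes f}R\otimes_B Y\xrightarrow{R\otimes g}R\otimes_B Z\to 0$ is a short exact sequence of $R$-modules; by Theorem~\ref{extensions} all three terms lie in $\Ind(B,R)$, which is closed under extensions, so it is an admissible sequence for the exact structure. Once it is shown to be a finite direct sum of almost split sequences and split sequences in $\Ind(B,R)$ — which is already the ``furthermore'' assertion for $R\otimes_B-$ — existence follows: given a non-relative-projective indecomposable $Z_R\cong R\otimes_B Z$ of $\Ind(B,R)$, faithfulness of $R\otimes_B-$ forces $Z$ to be indecomposable, so the above sequence has indecomposable end term $Z_R$ and hence is itself almost split; finally, because $\mathfrak{B}-\modu\simeq\Ind(B,R)$ respects the exact structure, this pulls back to an almost split sequence in $\mathfrak{B}-\modu$, and dually for $\CoInd(B,L)$.

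The substantive point is a comparison of morphism spaces. Using the adjunction between $R\otimes_B-$ and restriction along the canonical algebra map $B\to R$, together with the splitting of $W$ into $B$ plus a finitely generated projective bimodule afforded by the counit $\varepsilon$ and the projective-kernel hypothesis, one obtains for $X'\in B-\modu$ a functorial decomposition
\[\Hom_R(R\otimes_B X',\,R\otimes_B Z)\;\cong\;\Hom_B(X',Z)\ \oplus\ H(X',Z),\]
where the first summand is the image of $R\otimes_B-$ and $H(X',Z)$ collects the morphisms vanishing on restriction to $B$ (built from $\Hom_B(\overline W,-)$). From this one reads off that a morphism $M\to R\otimes_B Z$ in $\Ind(B,R)$ which is not a split epimorphism factors through $R\otimes g$: the $\Hom_B(X',Z)$-component factors through $g$ by the defining property of the Auslander--Reiten sequence, and the $H$-component is disposed of by a lifting argument along $g$ for the $\overline W$-part. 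Hence $R\otimes g$ is right almost split in $\Ind(B,R)$; a dual computation, or an analysis of the radical of $\End_R(R\otimes_B Z)$, shows $R\otimes f$ is left almost split after discarding split summands. Passing to indecomposable decompositions of $R\otimes_B X$ and $R\otimes_B Z$ then yields the claimed decomposition of the sequence.

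I expect this last step to be the genuine obstacle: the extra summand $H(X',Z)$ must be controlled precisely enough to guarantee that applying $R\otimes_B-$ neither introduces a new retraction — which would force the sequence to split for the wrong reason — nor destroys the lifting property, i.e.\ that the relative-projective objects of $\Ind(B,R)$ are exactly the images of the projective $B$-modules, so that the dichotomy ``almost split versus split'' is exhaustive. This is precisely where the assumptions that $\mathfrak{B}$ has projective kernel, and that $\mathfrak{B}-\modu$ is fully additive (so that Krull--Schmidt decompositions are available), are indispensable.
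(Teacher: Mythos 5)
Your proposal cannot be matched against an argument in the paper, because the paper does not prove Theorem \ref{almostsplitbocses} at all: it is quoted from Burt and Butler \cite{BB91} (and \cite{Bur05}), whose proof runs in essentially the opposite direction to yours, working inside $R-\modu$ and $L-\modu$ (finite dimensional algebras), using extension closure of $\Ind(B,R)$ and full additivity together with the bimodule $W$ and the double centraliser property, so that existence and the ``furthermore'' clause come out together. Judged on its own, your plan has genuine gaps. First, you aim at the wrong comparison: the ``furthermore'' clause concerns images in $R-\modu$ of almost split sequences of $\mathfrak{B}-\modu$, i.e.\ sequences that are almost split with respect to the larger morphism spaces $\Hom_{\mathfrak B}(X,Y)\cong\Hom_B(W\otimes_BX,Y)$; it is not the statement that $R\otimes_B-$ takes Auslander--Reiten sequences of $B-\modu$ to (sums of) relative almost split or split sequences in $\Ind(B,R)$, and nothing in the hypotheses supports the latter. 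Second, even if one granted your dichotomy, existence would not follow: by Theorem \ref{extensions} the comparison map $\Ext^1_B(Z,\tau_BZ)\to\Ext^1_R(R\otimes_BZ,R\otimes_B\tau_BZ)$ is only an epimorphism for $n=1$, so the class of the AR sequence of $B-\modu$ may die under induction; then your induced sequence splits even though $R\otimes_BZ$ is indecomposable and not Ext-projective, and your construction yields no almost split sequence ending there. You never argue why the non-split horn of your dichotomy occurs exactly when it is needed, and this is not a removable technicality.

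The ``substantive point'' also rests on a decomposition that does not exist. A projective kernel does not make the counit sequence $0\to\overline W\to W\to B\to0$ split as a sequence of $B$-bimodules; it splits only as one-sided modules. Hence for $X',Z\in B-\modu$ one only has a left exact sequence $0\to\Hom_B(X',Z)\to\Hom_{\mathfrak B}(X',Z)\to\Hom_B(\overline W\otimes_BX',Z)$, not a natural direct sum $\Hom_R(R\otimes_BX',R\otimes_BZ)\cong\Hom_B(X',Z)\oplus H(X',Z)$. Concretely, in the description of Subsection \ref{boxmorphismswithgenerators}, a box morphism $f$ with nonzero components $f_\varphi$ on the dashed arrows has $\omega$-components $f_{\mathtt i}$ that need not be $B$-linear (they commute with the $B$-action only up to the correction terms $\Phi_f(\partial b)$), so $f$ cannot in general be written as a $B$-linear morphism plus a morphism ``vanishing on $B$'', and the step where the $H$-component is ``disposed of by a lifting argument along $g$'' has no content. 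Controlling precisely these extra morphisms is the heart of the matter, and it is what the cited proof of Burt and Butler is designed to do; if you want an argument in the spirit of this paper rather than a citation, it should start from the AR theory of the finite dimensional algebra $R$ and the extension-closed, fully additive subcategory $\Ind(B,R)$, not from induction of AR sequences over $B$.
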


\subsection{Double centraliser and Ext-injectives}

A certain double centraliser property is ubiquitous in the theory of quasi-hereditary algebras, thus it is mandatory to also have it for the boxes we want to consider:

\begin{thm}[{\cite[Proposition 2.7]{BB91}}]\label{doublecentraliser}
Let $\mathfrak{B}=(B,W)$ be a box with projective kernel. Then there is a double centraliser property:
$$L\cong \End_{R^{op}}(W)\text{ and } R^{op}\cong \End_L(W).$$
\end{thm}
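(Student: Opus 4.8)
The plan is to deduce both isomorphisms from the Burt--Butler equivalences of Theorem~\ref{inducedboxmodules} by identifying $W$ with the image of the regular representation. First I would apply Proposition~\ref{exact} at the $B$-module $X=B$: since $W\otimes_B B=W$, it gives an isomorphism of left $L$-modules $W\cong\Hom_B({}_BL,{}_BB)$, the right-hand side being the object of $\CoInd(B,L)$ attached to the $B$-module $B$. Because $\CoInd(B,L)$ is a full subcategory of $L-\modu$, this yields $\End_L(W)\cong\End_{\CoInd(B,L)}(\Hom_B(L,B))$, and transporting along the equivalence $\CoInd(B,L)\simeq\mathfrak{B}-\modu$, under which this object corresponds to the box representation $B$ (the regular $B$-module), one obtains $\End_L(W)\cong\End_{\mathfrak{B}}(B)=R^{op}$ --- the last equality being the definition of $R$. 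Passing to the opposite box $\mathfrak{B}^{op}$, which again has projective kernel and whose left algebra is $R^{op}$, the mirror-image argument gives $\End_{R^{op}}(W)\cong\End_{\mathfrak{B}^{op}}(B)=L$.

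It then remains to see that these identifications are isomorphisms of rings, not merely bijections of $\mathbbm{k}$-spaces. For this I would write out the ${}_LW_R$-bimodule structure explicitly --- $e\cdot w=(e\otimes\mathbbm{1})\mu(w)$ for $e\in L$ and $w\cdot f=(\mathbbm{1}\otimes f)\mu(w)$ for $f\in R$, so that coassociativity of $\mu$ makes the two actions commute --- and check that the canonical ring homomorphisms $L\to\End_{R^{op}}(W)$ and $R^{op}\to\End_L(W)$ they produce agree with the identifications of the first paragraph; this is a bookkeeping comparison against the convolution products defining the multiplications of $L$ and $R$.

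A more self-contained alternative, in the spirit of the earlier sections, is to prove directly that the canonical maps $\Lambda\colon L\to\End_{R^{op}}(W)$ and $P\colon R^{op}\to\End_L(W)$ are bijective. Injectivity is formal: for $f\in R^{op}=\Hom_B({}_BW,{}_BB)$ one has $\varepsilon\circ P(f)=f$, since $\varepsilon\bigl((\mathbbm{1}\otimes f)\mu(w)\bigr)=\sum\varepsilon(w_{(1)})f(w_{(2)})=f\bigl(\sum\varepsilon(w_{(1)})w_{(2)}\bigr)=f(w)$ by left $B$-linearity of $f$ and the counit axiom, and symmetrically for $\Lambda$. For surjectivity, given $\alpha\in\End_L(W)$ put $r:=\varepsilon\circ\alpha$; this lies in $R^{op}$ because $\alpha$ is left $B$-linear via the embedding $B\hookrightarrow L$, $b\mapsto b\,\varepsilon(-)$. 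One must show $\alpha=P(r)$, i.e. that $\beta:=\alpha-P(r)$ --- a left $L$-linear endomorphism of $W$ with $\varepsilon\circ\beta=0$ --- is zero, using that $\mu$ is left $L$-linear for the action on its first factor. This is where the projective-kernel hypothesis is essential: for the boxes coming from differential graded categories as in Lemma~\ref{boxconstruction} one has $W=\bigoplus_{\mathtt{i}}B\omega_{\mathtt{i}}B\oplus\overline{W}$ with $\mu(\omega_{\mathtt{i}})=\omega_{\mathtt{i}}\otimes\omega_{\mathtt{i}}$ and $\mu(\varphi)=\omega\varphi+\varphi\omega+\partial\varphi$ for the generators $\varphi$ of $\overline{W}$, where once the box is directed $\partial\varphi$ involves only generators of strictly smaller valuation $\nu$ (Lemma~\ref{dgcbox}); then $\beta=0$ follows by induction on $\nu$ just as in the normal-form arguments around Proposition~\ref{definemorphism}. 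For an arbitrary box with projective kernel one runs the corresponding induction along the coradical filtration of the coring $W$ instead.

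The main obstacle is exactly this surjectivity step: reconstructing a left $L$-linear endomorphism of $W$ from its composite with the counit. The rest --- the construction of the canonical maps, their injectivity, the ring-homomorphism bookkeeping, and the identification of $W$ with the regular representation --- is essentially formal once Proposition~\ref{exact} and Theorem~\ref{inducedboxmodules} are available; it is the finiteness and projectivity packaged into the projective-kernel condition that makes the inductive comparison close up, which is precisely why that hypothesis appears in the statement.
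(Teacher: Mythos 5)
The paper itself does not prove Theorem~\ref{doublecentraliser}: it is imported verbatim from Burt and Butler \cite{BB91} (their Proposition 2.7) and then used as a black box, e.g.\ in the proof that $L_\mathfrak{B}$ is Morita equivalent to the Ringel dual of $R_\mathfrak{B}$. So there is no internal proof to compare with; what you offer is an independent derivation, and your main route (the first two paragraphs) is correct and not circular. Proposition~\ref{exact} at $X=B$ identifies ${}_LW\cong W\otimes_BB\cong\Hom_B(L,B)$ as left $L$-modules; full faithfulness of the coinduction functor underlying Theorem~\ref{inducedboxmodules} then transports $\End_{\mathfrak B}(B)=R^{op}$ to $\End_L(W)$ as rings; and passing to $\mathfrak{B}^{op}$ (whose kernel is again a projective bimodule, and whose left and right algebras the paper identifies with $R^{op}$ and $L^{op}$ in the last proposition of Section~\ref{section10}, by a computation independent of the double centraliser) gives the other half. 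In \cite{BB91} the quoted ingredients precede Proposition 2.7, so no hidden circularity. One point to make explicit: Theorem~\ref{inducedboxmodules} as stated only asserts that the categories are equivalent; your transport of endomorphism rings at the object $B$ needs the (true, and intended) stronger statement that the equivalence is realized by the coinduction functor $X\mapsto\Hom_B(L,X)\cong W\otimes_BX$, so that the box representation $B$ really corresponds to $\Hom_B(L,B)$.

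Your ``self-contained alternative'' in the third paragraph is, by contrast, not a complete proof in the stated generality. The injectivity computations ($\varepsilon\circ P(f)=f$, and its mirror) are fine, but the surjectivity step --- that a left $L$-linear $\beta$ with $\varepsilon\circ\beta=0$ must vanish --- is only sketched: the induction on the valuation $\nu$ makes sense for the directed boxes produced by Lemma~\ref{boxconstruction}, whereas the theorem is asserted for an arbitrary box with projective kernel, and the phrase ``induction along the coradical filtration of the coring $W$'' is not yet an argument over the non-semisimple base $B$, where no grading and no group-like splitting of $\varepsilon$ are assumed; this surjectivity is exactly the substance of Burt--Butler's proposition. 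Since that paragraph is offered only as an alternative, your main proof stands, and it has the merit of showing that the double centraliser is a formal consequence of the other two imported results together with the box-opposite symmetry the paper proves anyway.
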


To prove that the left and right algebra of a box are Ringel dual to each other the following proposition will be useful:

\begin{thm}[{\cite[Theorem 5.2]{BB91}}]\label{injectivetiltings}
Suppose that $\mathfrak{B}=(B,W)$ is a box with projective kernel such that $\mathfrak{B}-\modu$ is fully additive. Then a module in $\Ind(B,R)$ is Ext-projective iff it is projective and Ext-injective iff it belongs to $\add(DW)$.
\end{thm}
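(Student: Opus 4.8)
\noindent\emph{Proof strategy.} The plan is to transport the statement along the equivalence of Theorem~\ref{inducedboxmodules} between $\mathfrak{B}-\modu$ and the subcategory $\Ind(B,R)\subseteq R-\modu$, and to exploit two inputs: that the induction functor $R\otimes_B-\cong\Hom_B(W,-)$ is exact (Proposition~\ref{exact}), and that inside $\Ind(B,R)$ the exact pairs are precisely the short exact sequences of $R-\modu$ with all three terms in $\Ind(B,R)$, each of which is obtained by applying $R\otimes_B-$ to a short exact sequence of $B-\modu$ (Theorem~\ref{extensions}). Here Ext-projective means that every deflation onto the module splits, and Ext-injective that every inflation out of it splits. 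The hypothesis that $\mathfrak{B}-\modu$ is fully additive will be used to guarantee that $\Ind(B,R)$ is closed under direct summands, so that $\add$ of any of its objects stays inside it.

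For the Ext-projective statement I would argue as follows. On the one hand $R=R\otimes_B B\in\Ind(B,R)$, hence every projective $R$-module lies in $\Ind(B,R)$, and it is Ext-projective since deflations are in particular epimorphisms of $R-\modu$. On the other hand, given an Ext-projective $M\cong R\otimes_B X$, I would choose a surjection $P\twoheadrightarrow X$ from a projective $B$-module $P$ with kernel $K$ and apply the exact functor $R\otimes_B-$ to $0\to K\to P\to X\to 0$. This produces a short exact sequence $0\to R\otimes_B K\to R\otimes_B P\to M\to 0$ in $R-\modu$ with all terms in $\Ind(B,R)$, i.e.\ a deflation onto $M$; as $R\otimes_B P$ is a direct summand of a power of $R$ it is a projective $R$-module, and Ext-projectivity of $M$ forces the deflation to split, so $M$ is projective.

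For the Ext-injective statement the key preliminary step is the identification $DW\cong R\otimes_B DB$ as left $R$-modules, which follows from the $\Hom$--tensor adjunction $\Hom_B(W,DB)\cong\Hom_\mathbbm{k}(B\otimes_B W,\mathbbm{k})\cong DW$ combined with Proposition~\ref{exact}; in particular $DW\in\Ind(B,R)$, and since $DB$ is an injective cogenerator of $B-\modu$ the class $\add(DW)$ is exactly the image under $R\otimes_B-$ of the injective $B$-modules. Now if $M\cong R\otimes_B I$ with $I$ injective, then by Theorem~\ref{extensions} any inflation $M\hookrightarrow Y$ in $\Ind(B,R)$ is induced from a short exact sequence $0\to I\to Y'\to Z'\to 0$ of $B-\modu$, which splits because $I$ is injective; applying $R\otimes_B-$ shows the inflation splits, so every object of $\add(DW)$ is Ext-injective. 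Conversely, for Ext-injective $M\cong R\otimes_B X$ I would embed $X\hookrightarrow I$ into an injective $B$-module and apply $R\otimes_B-$ to obtain an inflation $M\hookrightarrow R\otimes_B I$ with $R\otimes_B I\in\add(DW)$; Ext-injectivity of $M$ makes it split, whence $M\in\add(DW)$.

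The part I expect to require the most care, rather than the homological bookkeeping above, is the matching of the exact structure on $\mathfrak{B}-\modu$ constructed in the previous section with the extension-closed exact structure on $\Ind(B,R)$ under the equivalence of Theorem~\ref{inducedboxmodules}, so that ``deflation'' and ``inflation'' really mean what is claimed, together with checking the compatibility of the various module structures---in particular that $DW$, formed from the right $R$-action on $W$, is genuinely $R\otimes_B DB$ as a \emph{left} $R$-module. Everything else reduces to exactness of induction (Proposition~\ref{exact}) and the description of extensions in $\Ind(B,R)$ (Theorem~\ref{extensions}).
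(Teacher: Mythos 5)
This statement is quoted from Burt--Butler (\cite[Theorem 5.2]{BB91}); the paper gives no proof of its own, so there is nothing internal to compare your argument against. Judged on its own terms, your reconstruction from the other quoted ingredients (Proposition~\ref{exact}, Theorems~\ref{inducedboxmodules} and~\ref{extensions}) is essentially sound, and it is the natural route: exactness of $R\otimes_B-$ turns a projective presentation of $X$ into a conflation exhibiting an Ext-projective $M\cong R\otimes_B X$ as a summand of the projective module $R\otimes_B P$; the identification $DW\cong\Hom_B(W,DB)\cong R\otimes_B DB$ together with the surjectivity of $\Ext^1_B(Z',I)\to\Ext^1_R(R\otimes_B Z',R\otimes_B I)$ for $I$ injective handles the Ext-injective side; and you correctly locate where the full-additivity hypothesis enters (closure of $\Ind(B,R)$ under direct summands, so that projectives and summands of $DW^n$ stay in the subcategory). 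The compatibility points you flag (the exact structure on $\Ind(B,R)$ being the one inherited from $R-\modu$, and the left $R$-module structure in $DW\cong R\otimes_B DB$) are indeed the places where care is needed, and both work out as you expect.

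One step is overstated and, as written, leaves a small hole: you assert that $\add(DW)$ is \emph{exactly} the image of the injective $B$-modules under $R\otimes_B-$. The inclusion you need for the converse direction ($R\otimes_B I\in\add(DW)$ for $I$ injective) is clear, but the reverse inclusion -- that every direct summand of $DW^n$ is again of the form $R\otimes_B I$ with $I$ injective -- is not a priori justified, since $R\otimes_B-$ is far from injective on isomorphism classes of $B$-modules (compare the regularisation example in Appendix~\ref{A2}). Your argument for ``$\add(DW)\Rightarrow$ Ext-injective'' only treats modules of the form $R\otimes_B I$, so a general $M\in\Ind(B,R)\cap\add(DW)$ is not covered. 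The repair is one line: having shown $DW\cong R\otimes_B DB$ is Ext-injective, observe that finite direct sums and direct summands of Ext-injective objects are Ext-injective (split off the complement from a split inflation $f\oplus\mathbbm{1}$), so every object of $\add(DW)$ lying in $\Ind(B,R)$ is Ext-injective. With that adjustment the proof is complete.
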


\section{Proofs of the main results, and some corollaries}\label{section10}

In this section we will collect all the results and prove our main theorems. We start by definining the class of directed boxes:

\begin{defn}
The \emphbf{bigraph} of a box with projective kernel is given by the quiver of $B$ as the set of degree $0$ arrows and for every copy of $Be_\mathtt{i}\otimes e_\mathtt{j}B$ in $W$, a dashed arrow $\begin{xy}\xymatrix{\mathtt{j}\ar@{-->}[r] &\mathtt{i}}\end{xy}$.
A box with projective kernel is called \emphbf{directed} provided its bigraph is directed as an oriented graph.
\end{defn}

The following theorem is the 'if' direction of our main theorem. It appeared in a special case in a slightly different language already in \cite{BH00} (see also \cite{Den01,XL02}).

\begin{thm}
Let $\mathfrak{B}=(B,W)$ be a directed box. Then the left and right algebra of $B$ are quasi-hereditary algebras.
\end{thm}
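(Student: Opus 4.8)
The plan is to prove the statement for the right algebra $R := R_{\mathfrak B}$ in detail; the left algebra $L := L_{\mathfrak B}$ is handled by the entirely symmetric argument with right representations of $\mathfrak B$ and the right-hand version of the Burt--Butler theory of Section~\ref{section9}. First I would use directedness of $\mathfrak B$ to fix the labelling $\mathtt{1} < \cdots < \mathtt{n}$: a topological sort of the bigraph makes every solid arrow (the quiver of $B$) and every dashed arrow (a generator of the projective kernel $\overline W := \ker\varepsilon$) strictly increasing, and rules out loops. In particular the quiver of $B$ is acyclic with increasing arrows, so $B$ itself is directed, i.e.\ quasi-hereditary with $\Delta_B(\mathtt{i}) = L_B(\mathtt{i})$; hence each $P_B(\mathtt{i})$ has $L_B(\mathtt{i})$ as top with multiplicity one and all remaining composition factors among the $L_B(\mathtt{j})$ with $\mathtt{j} > \mathtt{i}$, and $\operatorname{rad} P_B(\mathtt{i})$ is a quotient of a direct sum of copies of the $P_B(\mathtt{j})$, $\mathtt{j} > \mathtt{i}$. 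I would then take $\Delta_R(\mathtt{i}) := R \otimes_B L_B(\mathtt{i})$ as the candidate standard modules; by Proposition~\ref{exact} and Theorem~\ref{inducedboxmodules} the functor $R\otimes_B -$ induces an equivalence $\mathfrak{B}-\modu \simeq \Ind(B,R)$ sending a box representation $X$ to $R\otimes_B X$, so that $\Delta_R(\mathtt{i})$ corresponds to the box representation $L_B(\mathtt{i})$.

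The computational heart is the evaluation of $\Hom_{\mathfrak B}$ between the relevant box representations. Using $\Hom_{\mathfrak B}(X,Y) = \Hom_{B\otimes B^{op}}(W,\Hom_{\mathbbm{k}}(X,Y))$ together with the bimodule short exact sequence $0 \to \overline W \to W \xrightarrow{\varepsilon} B \to 0$ with $\overline W$ projective, the $B$-summand contributes $\Hom_B(X,Y)$ (this is where the identity morphisms come from, via $\varepsilon$), while the $\overline W$-summand is read off by the Yoneda lemma from the projective generators of $\overline W$, i.e.\ from the dashed arrows. With $X,Y$ simple this yields $\End_R(\Delta_R(\mathtt{i})) = \End_{\mathfrak B}(L_B(\mathtt{i})) = \mathbbm{k}$ (no dashed loops) and $\Hom_R(\Delta_R(\mathtt{i}),\Delta_R(\mathtt{j})) \neq 0 \Rightarrow \mathtt{i} \leq \mathtt{j}$ (a contributing dashed arrow must run $\mathtt{i} \dashrightarrow \mathtt{j}$ with $\mathtt{i} < \mathtt{j}$). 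With $X = P_B(\mathtt{j})$, $Y = L_B(\mathtt{i})$, using that $B$ is directed so all composition factors of $P_B(\mathtt{j})$ are $\geq \mathtt{j}$, the same computation gives $[\Delta_R(\mathtt{i}):L_R(\mathtt{j})] = \dim\Hom_{\mathfrak B}(P_B(\mathtt{j}),L_B(\mathtt{i})) = 0$ for $\mathtt{j} > \mathtt{i}$ and $=1$ for $\mathtt{j} = \mathtt{i}$. Together with Burt and Butler's comparison of extensions (Theorem~\ref{extensions}), which gives an epimorphism $\Ext^1_B(L_B(\mathtt{i}),L_B(\mathtt{j})) \twoheadrightarrow \Ext^1_R(\Delta_R(\mathtt{i}),\Delta_R(\mathtt{j}))$ whose source vanishes unless there is a solid arrow $\mathtt{i} \to \mathtt{j}$ (so $\mathtt{i} < \mathtt{j}$), this already shows the $\Delta_R(\mathtt{i})$ form a standardisable set, whence by Theorem~\ref{dlabringelstandardization} $R$ is Morita equivalent to a quasi-hereditary algebra.

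To get the sharper assertion that $R$ is \emph{itself} quasi-hereditary with standard modules $\Delta_R(\mathtt{i})$, I would apply the exact functor $R\otimes_B -$ to a composition series of $P_B(\mathtt{i})$: this exhibits $P_R(\mathtt{i}) = R\otimes_B P_B(\mathtt{i})$, with the surjection $P_R(\mathtt{i}) \twoheadrightarrow \Delta_R(\mathtt{i})$ having kernel $R\otimes_B \operatorname{rad} P_B(\mathtt{i})$, which is a quotient of a direct sum of the $P_R(\mathtt{j})$, $\mathtt{j} > \mathtt{i}$, and carries a filtration with sections $\Delta_R(\mathtt{j})$, $\mathtt{j} > \mathtt{i}$. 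Combined with $[\Delta_R(\mathtt{i}):L_R(\mathtt{j})] = 0$ for $\mathtt{j} > \mathtt{i}$ --- which forces that kernel to equal the trace of $\bigoplus_{\mathtt{j} > \mathtt{i}} P_R(\mathtt{j})$ in $P_R(\mathtt{i})$ --- this identifies $\Delta_R(\mathtt{i})$ with the intrinsic standard module of $R$, and then $\End_R(\Delta_R(\mathtt{i})) = \mathbbm{k}$ plus the $\Delta_R(>\mathtt{i})$-filtration of the kernel is precisely the definition of quasi-hereditary. (One checks in passing that the $Re_{\mathtt{i}}$ are the indecomposable projectives, i.e.\ $R$ is basic, again using directedness; in any case the conclusion only depends on the Morita equivalence class.) The case of $L$ is the dual statement.

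I expect the main obstacle to be precisely the $\Hom_{\mathfrak B}$-computations together with the identification of $R\otimes_B L_B(\mathtt{i})$ with the intrinsic standard module of $R$: everything hinges on controlling the composition factors of $R\otimes_B L_B(\mathtt{i})$, and this is the exact place where directedness of the \emph{whole} bigraph is used --- one needs both that $B$ is directed, to bound the composition factors of the $P_B(\mathtt{j})$, and that the dashed arrows are increasing, to bound the $\overline W$-contribution to $\Hom_{\mathfrak B}$. Without directedness the modules $R\otimes_B L_B(\mathtt{i})$ need not be the standard modules of any highest weight structure on $R$, and the assertion fails.
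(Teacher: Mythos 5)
Your argument is correct, and its core coincides with the paper's proof: the same candidate standard modules $\Delta_R(\mathtt{i})=R\otimes_B L_B(\mathtt{i})$, the $\Ext^1$-direction via the Burt--Butler comparison (Theorem \ref{extensions}) together with the vanishing of $\Ext^1_B$ in one direction, and the $\Hom$-direction plus $\End\cong\mathbbm{k}$ read off from the fact that $\overline{W}=\ker\varepsilon$ is a projective bimodule whose generators are the (strictly increasing) dashed arrows. Where you diverge is the finish: the paper simply observes that $R\cong R\otimes_B B$ is a projective generator lying in $\mathcal{F}(\Delta_R)$ (exactness of $R\otimes_B-$, Proposition \ref{exact}) and concludes quasi-heredity from the standard system, whereas you verify the definition directly by computing $[\Delta_R(\mathtt{i}):L_R(\mathtt{j})]$ and identifying $\Delta_R(\mathtt{i})$ with the intrinsic standard modules of $R$. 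Your route buys a sharper statement (the $\Delta_R(\mathtt{i})$ \emph{are} the standard modules of $R$ itself), but it is not free: it presupposes that the idempotents $e_{\mathtt{i}}$ remain primitive and pairwise non-isomorphic in $R$, i.e.\ that $R$ is basic with exactly $\mathtt{n}$ simples indexed compatibly, which you only assert ``in passing''; the paper's route through the filtered projective generator sidesteps this entirely. Two smaller points to tighten: the sequence $0\to\overline{W}\to W\xrightarrow{\varepsilon}B\to 0$ need not split as bimodules, so $W$ has no ``$B$-summand''; but left-exactness of $\Hom_{B\otimes B^{op}}(-,\Hom_{\mathbbm{k}}(X,Y))$ gives exactly the upper bounds you use, and the identity morphisms still come via $\varepsilon$, so nothing breaks. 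Also, your intermediate appeal to Theorem \ref{dlabringelstandardization} overstates it slightly: a standardisable set of $R$-modules by itself only produces \emph{some} quasi-hereditary algebra with equivalent $\mathcal{F}(\Delta)$, not a Morita equivalence with $R$; it is precisely your third paragraph (the $\Delta_R$-filtration of the $Re_{\mathtt{i}}$), or equivalently the paper's observation about the projective generator, that closes this loop. Your treatment of $L$ by the symmetric (right-representation) argument matches the paper's dual argument with $\nabla(\mathtt{i})=\Hom_B(L,L_B(\mathtt{i}))$ and the injective cogenerator $D(L)$.
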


\begin{proof}
Let $\Delta(\mathtt{i}):=R\otimes_B L(\mathtt{i})$. Then according to the $\Ext$-comparison (Theorem \ref{extensions}) these modules have no extensions in one direction (because the algebra $B$ has none). For the homomorphisms note that $\Hom_R(\Delta(\mathtt{i}),\Delta(\mathtt{j}))\cong \Hom_\mathfrak{B}(L(\mathtt{i}),L(\mathtt{j}))=\Hom_B(W\otimes L(\mathtt{i}),L(\mathtt{j}))$. Since $W$ is a factor of a bimodule with direct summands $Be_\mathtt{k}\otimes e_\mathtt{l} B$ for $\mathtt{k}\geq \mathtt{l}$, $\Hom_B(W\otimes L(\mathtt{i}),L(\mathtt{j}))$ is nonzero only if $\mathtt{i}\leq \mathtt{j}$ as $B$ is directed. Hence, $\Delta(\mathtt{1}),\dots,\Delta(\mathtt{n})$ form a standard system and $R\cong R\otimes_B B$ is a projective generator. Thus $R$ is quasi-hereditary.\\
The proof for $L$ gives modules $\nabla(\mathtt{i}):=\Hom_B(L,L(\mathtt{i}))$ which form a standard system, such that $\mathcal{F}(\nabla)$ contains the injective cogenerator $D(L)\cong \Hom_B(L,DB)$. Hence $L$ is quasi-hereditary by the dual statement of the standardisation theorem.
\end{proof}

The following part is the 'only if' direction which is the direction where all the machinery developed in this article is needed.

\begin{thm}
Let $\Delta$ be a standard system in an abelian $\mathbbm{k}$-category and let $\mathcal{F}(\Delta)$ be the category of objects with a $\Delta$-filtration. Then there exists a directed box $\mathfrak{B}=(B,W)$ such that $\mathcal{F}(\Delta)$ is equivalent as an exact $\mathbbm{k}$-category to the category $\mathfrak{B}-\modu$ of finitely generated $\mathfrak{B}$-representations.

Moreover $\mathcal{F}(\Delta)$ is equivalent as an exact $\mathbbm{k}$-category to the category of $\Delta$-filtered modules for the quasi-hereditary algebra $R_\mathfrak{B}$.

In particular, every quasi-hereditary algebra $(A,\leq)$ is Morita equivalent with the same quasi-hereditary structure to the right Burt-Butler algebra of a directed box.
\end{thm}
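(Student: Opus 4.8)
The plan is to assemble what has been built in Sections~\ref{section4}--\ref{section9}, so that almost nothing new has to be proved. First I would reduce to quasi-hereditary algebras: by the Dlab--Ringel standardisation theorem (Theorem~\ref{dlabringelstandardization}) there is a quasi-hereditary algebra $A$, unique up to Morita equivalence, together with an exact equivalence $\mathcal{F}(\Delta)\simeq\mathcal{F}(\Delta_A)$ carrying $\Delta$ to the standard modules of $A$, so it suffices to produce the box starting from $\mathcal{A}:=\Ext^*_A(\Delta_A,\Delta_A)$, regarded as an $A_\infty$-category, which satisfies (E1)--(E4). Lemma~\ref{boxconstruction} then attaches to $\mathcal{A}$ a box $\mathfrak{B}=(B,W)$ with projective kernel, with $B=\mathbb{L}[Q^0]/(\cdots)$ for $Q^0=(\mathbb{D}s\mathcal{A})^0$ and $W=U_1/(d(B))$ for $U_1=Q^1=(\mathbb{D}s\mathcal{A})^1$; the theorem concluding Section~\ref{section8}, via Theorem~\ref{kellerlefevre}, identifies $\mathfrak{B}-\modu$ with $H^0(\twmod\mathcal{A})$ and hence with $\mathcal{F}(\Delta)$ as additive categories. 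I would next check that $\mathfrak{B}$ is directed: the quiver of $B$ has an arrow $\mathtt{i}\to\mathtt{j}$ only when $\Ext^1_A(\Delta_A(\mathtt{i}),\Delta_A(\mathtt{j}))\neq0$, i.e.\ only when $\mathtt{i}<\mathtt{j}$, while the dashed arrows of the bigraph come from (a subquotient of) $D(\Hom_A(\Delta_A,\Delta_A))$, whose off-diagonal part points from $\mathtt{i}$ to $\mathtt{j}$ only when $\mathtt{i}<\mathtt{j}$ and whose diagonal part contributes only the loop-free generators $\omega_\mathtt{i}$; thus the bigraph respects the linear order and has no oriented cycle.

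The second, and main, step is to upgrade the equivalence $\mathfrak{B}-\modu\simeq\mathcal{F}(\Delta)$ to an equivalence of exact categories, where $\mathcal{F}(\Delta)\subseteq A-\modu$ carries the usual exact structure (short exact sequences of $A$-modules with all three terms $\Delta$-filtered) and $\mathfrak{B}-\modu$ the exact structure of Section~\ref{sectionexact}. Under the dictionary of Sections~\ref{section5}--\ref{section8}, a box representation $X$ corresponds to a $\Delta$-filtered module with $\Delta_A(\mathtt{i})$ occurring $\dim_\mathbbm{k}X(\mathtt{i})$ times --- the stalk twisted module at $\mathtt{i}$ going to the semisimple $B$-module $L_B(\mathtt{i})$, i.e.\ to $\Delta_A(\mathtt{i})$. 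An exact pair in $\mathfrak{B}-\modu$ is by definition split exact on every $X(\mathtt{i})$, equivalently of the form $\pi_\mathfrak{B}^*(E)$ for a short exact sequence $E$ of $B$-modules (Section~\ref{sectionexact}), and I would argue that such a pair corresponds precisely to a short exact sequence of $A$-modules with all terms in $\mathcal{F}(\Delta)$, these being exactly the ones whose $\Delta$-multiplicity vectors add. Hence the two exact structures match.

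For the remaining two assertions I would invoke Burt--Butler theory. Since $\mathfrak{B}$ is directed, the preceding theorem (the `if' direction) makes $R:=R_\mathfrak{B}$ quasi-hereditary with $\Delta_R(\mathtt{i})=R\otimes_BL_B(\mathtt{i})$ and $\Ind(B,R)=\mathcal{F}(\Delta_R)$. Theorem~\ref{inducedboxmodules} gives an equivalence $\mathfrak{B}-\modu\simeq\Ind(B,R)$; since $R\otimes_B-\cong\Hom_B(W,-)$ is exact on $B-\modu$ (Proposition~\ref{exact}) and every extension inside $\Ind(B,R)$ is induced from $B-\modu$ (Theorem~\ref{extensions}), this equivalence sends exact pairs to short exact sequences and back, so it is exact. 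Composing with the previous step gives $\mathcal{F}(\Delta)\simeq\mathcal{F}(\Delta_R)$ as exact $\mathbbm{k}$-categories, and the composite equivalence carries $\Delta_A(\mathtt{i})$ to $L_B(\mathtt{i})$ to $\Delta_R(\mathtt{i})$. Applying all of this to $\Delta=\Delta_A$ for a quasi-hereditary algebra $(A,\leq)$, one obtains a directed box $\mathfrak{B}$ and an exact equivalence $\mathcal{F}(\Delta_A)\simeq\mathcal{F}(\Delta_{R_\mathfrak{B}})$ matching standard modules index by index; by the uniqueness clause of Theorem~\ref{dlabringelstandardization} the algebras $A$ and $R_\mathfrak{B}$ are therefore Morita equivalent via an equivalence identifying $\Delta_A(\mathtt{i})$ with $\Delta_{R_\mathfrak{B}}(\mathtt{i})$, i.e.\ with the same quasi-hereditary structure.

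I expect the real obstacle to be matching the two exact structures in the second step --- the $\Delta$-admissible short exact sequences in $\mathcal{F}(\Delta)$ on one side, the ``split exact on each vertex'' pairs of Section~\ref{sectionexact} on the other --- since this requires tracking precisely how $\Delta$-filtration multiplicities are transported across the $A_\infty$- and box-theoretic identifications of Sections~\ref{section5}--\ref{section8}. The parallel identification $\mathfrak{B}-\modu\simeq\mathcal{F}(\Delta_R)$, in which conflations become honest short exact sequences of $R$-modules, provides both a cross-check and an alternative route; everything else in the argument is bookkeeping with results already established.
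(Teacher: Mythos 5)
Your proposal is correct and follows essentially the same route as the paper: construct the box from $\Ext^*_A(\Delta,\Delta)$ via Lemma \ref{boxconstruction} and the theorem of Section \ref{section8}, use the `if' direction to make $R_\mathfrak{B}$ quasi-hereditary, transport the exact structure through $\Ind(B,R)$ using Theorems \ref{inducedboxmodules} and \ref{extensions}, and conclude Morita equivalence from the uniqueness part of Theorem \ref{dlabringelstandardization}. You merely spell out details the paper's (very terse) proof leaves implicit, such as the directedness of the bigraph and the matching of exact structures via the induced-module picture.
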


\begin{proof}
We have already constructed a directed box $\mathfrak{B}$ with the wanted properties in the previous sections. By the previous theorem, the right Burt-Butler algebra of that box is a quasi-hereditary algebra and the results from the previous section by Burt and Butler guarantee that the category of filtered modules $\mathcal{F}(\Delta)$ is equivalent for the quasi-hereditary algebras $A$ and $R_\mathfrak{B}$. Then the uniqueness-part of the Dlab-Ringel standardisation theorem (Theorem \ref{dlabringelstandardization}) tells us that the two algebras must be Morita equivalent.
\end{proof}

Note that when starting the translation procedure upon which the above proof is based with a quasi-hereditary algebra $(A,\leq)$, the algebra $A$ gets lost already in the first step, which only keeps the category $\mathcal{F}(\Delta)$ up to categorical equivalence. Hence the resulting quasi-hereditary algebra $R_\mathfrak{B}$ is Morita equivalent, but not necessarily isomorphic to $A$.

In the Appendix \ref{A3} we will recall and discuss an example of \cite{Koe95}, which in the present context shows that $R_\mathfrak{B}$ need not, in general, be isomorphic to $A$ itself.

\begin{cor}
For every quasi-hereditary algebra $A$ there is a Morita equivalent algebra $R$ which has an exact Borel subalgebra $B$.
\end{cor}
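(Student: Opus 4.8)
The plan is to deduce the corollary directly from the Main Theorem together with the elementary part of Burt--Butler theory recalled in Section~\ref{section9}. Given a quasi-hereditary algebra $A$, the preceding theorem produces a directed box $\mathfrak{B}=(B,W)$, which has projective kernel by Lemma~\ref{boxconstruction}(v), such that the right Burt--Butler algebra $R:=R_{\mathfrak{B}}$ is quasi-hereditary and Morita equivalent to $A$ with the same quasi-hereditary structure; in particular, transporting along this equivalence identifies the standard modules $\Delta_A(\mathtt{i})$ with the standard modules $\Delta_R(\mathtt{i})=R\otimes_B L_B(\mathtt{i})$ of $R$. It therefore suffices to exhibit $B$ as an exact Borel subalgebra of $R$, and I would do this by verifying (B1), (B2), (B3) in turn.

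First I would produce the embedding $B\hookrightarrow R$. Since $R=\End_{\mathfrak{B}}(B)^{op}\cong\Hom_B({}_BW,{}_BB)$ and the counit $\varepsilon\colon W\to B$ is the identity of $R$, the assignment $b\mapsto(w\mapsto\varepsilon(w)b)$ defines a $\mathbbm{k}$-linear map $B\to R$; using that $\varepsilon$ is a $B$-bimodule map, the counit axioms, and the explicit formula for the multiplication of $R$ (the one given in the definition of $R_{\mathfrak{B}}$), one checks that this is a unital algebra homomorphism, and it is injective because $\varepsilon$ is surjective. Thus $B$ is a subalgebra of $R$. It has $n$ simple modules $L_B(\mathtt{i})$, indexed by the vertex set $S$ of the box, which is the same number as for $R$ (hence as for $A$). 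Property (B1) is then immediate: the box $\mathfrak{B}$ is directed, so by definition $B$ is a directed algebra, i.e.\ quasi-hereditary with $\Delta_B(\mathtt{i})=L_B(\mathtt{i})$.

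For (B2) I would invoke Proposition~\ref{exact}: since $\mathfrak{B}$ has projective kernel there is a natural isomorphism $R\otimes_B-\cong\Hom_B(W,-)$ of functors $B-\modu\to R-\modu$, and the latter is exact, so $R\otimes_B-$ is exact. For (B3), by the construction in the proof of the ``if'' direction one has $\Delta_R(\mathtt{i})=R\otimes_B L_B(\mathtt{i})$ as the standard modules of the quasi-hereditary algebra $R$ (the $R\otimes_B L_B(\mathtt{i})$ form a standard system and $R\cong R\otimes_B B$ is a projective generator filtered by them, so they are indeed the standard modules), and since $B$ is directed $L_B(\mathtt{i})=\Delta_B(\mathtt{i})$; hence the required isomorphism $R\otimes_B L_B(\mathtt{i})\cong\Delta_R(\mathtt{i})$ holds by definition, and under the Morita equivalence it matches $\Delta_A(\mathtt{i})$.

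The step needing the most care is not any single computation but the bookkeeping linking the Morita equivalence $A\sim R$ with the highest weight structures: one must be sure that the modules $R\otimes_B L_B(\mathtt{i})$ arising in Burt--Butler's picture are genuinely the standard modules of $R$ for the quasi-hereditary order corresponding to $(A,\leq)$, rather than merely some standard system. This is precisely what the ``with the same quasi-hereditary structure'' clause of the previous theorem guarantees, ultimately via the uniqueness part of the Dlab--Ringel standardisation theorem (Theorem~\ref{dlabringelstandardization}), so no further argument is required; the remaining verifications that $b\mapsto(w\mapsto\varepsilon(w)b)$ respects multiplication and that the counts of simple modules agree are routine once the Burt--Butler conventions are unwound.
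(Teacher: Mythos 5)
Your proposal is correct and follows essentially the same route as the paper's (much terser) proof: take the directed box $\mathfrak{B}=(B,W)$ already constructed, embed $B$ into $R=R_{\mathfrak{B}}$ via the counit ($b\mapsto b\varepsilon$), get exactness of $R\otimes_B-$ from Proposition \ref{exact}, and note that the simples of $B$ are sent to the standard modules by the construction in the ``if'' direction, with the Morita equivalence and matching of quasi-hereditary structures coming from the Dlab--Ringel standardisation theorem. You merely spell out the routine verifications (algebra homomorphism, injectivity, count of simples) that the paper leaves implicit.
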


\begin{proof}
Let $B$ be the algebra from the box $\mathfrak{B}=(B,W)$ constructed above. Obviously $B$ is directed, $B\to R, b\mapsto b\varepsilon$ provides an inclusion, the tensor functor is exact by Proposition \ref{exact} and sends the simples to the standard modules by construction.
\end{proof}

\begin{thm}
Let $\mathfrak{B}$ be a box with projective kernel. Then $L_{\mathfrak{B}}$ is Morita equivalent to the Ringel dual of $R_{\mathfrak{B}}$. In particular $\mathcal{F}(\nabla_{L})\cong \mathcal{F}(\Delta_R)$ and vice versa.
\end{thm}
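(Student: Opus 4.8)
The plan is to identify the characteristic tilting module of $R:=R_\mathfrak{B}$, up to additive equivalence, with the $R$-module $DW$, and then to recognise the opposite of its endomorphism ring as $L:=L_\mathfrak{B}$ by means of the double centraliser property. Throughout I shall use that $R$ and $L$ are quasi-hereditary and that $\mathfrak{B}-\modu$ is fully additive, so that the Ringel dual is defined and Theorem~\ref{injectivetiltings} applies.

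First I would record that, as exact subcategories, $\mathcal{F}(\Delta_R)=\Ind(B,R)$ and $\mathcal{F}(\nabla_L)=\CoInd(B,L)$: indeed $R=R\otimes_B B$ is a projective generator of $R-\modu$ with $\Delta_R(\mathtt{i})=R\otimes_B L(\mathtt{i})$, and since $R\otimes_B-$ is exact (Proposition~\ref{exact}) every module $R\otimes_B X$ is $\Delta_R$-filtered, while conversely $\Ind(B,R)$ is closed under extensions (Theorem~\ref{extensions}) and contains all $\Delta_R(\mathtt{i})$; the $\nabla$-case is dual. By Theorem~\ref{inducedboxmodules} the Burt--Butler functors $R\otimes_B-$ and $\Hom_B(L,-)$ exhibit both of these as equivalent, as exact categories, to $\mathfrak{B}-\modu$, and composing them yields the ``in particular'' equivalence $\mathcal{F}(\nabla_L)\simeq\mathfrak{B}-\modu\simeq\mathcal{F}(\Delta_R)$ of exact categories.

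The key step is the identification $\add(T)=\add(DW)$, where $T$ is the characteristic tilting module of $R$. Under the exact equivalence $\Ind(B,R)\simeq\mathfrak{B}-\modu$ the Ext-injective objects correspond to one another; in $\mathcal{F}(\Delta_R)$ these are, by Ringel's theory, exactly the objects of $\mathcal{F}(\Delta_R)\cap\mathcal{F}(\nabla_R)=\add(T)$, whereas Theorem~\ref{injectivetiltings} identifies them with $\add(DW)$. Hence $\End_R(T)$ is Morita equivalent to $\End_R(DW)$. Now $W$ is an $L$-$R$-bimodule, hence a left $R^{op}$-module, and the standard duality $D$ carries the left $R$-module $DW$ back to $W$, so $\End_R(DW)\cong\End_{R^{op}}(W)^{op}$; combined with the double centraliser property $\End_{R^{op}}(W)\cong L$ of Theorem~\ref{doublecentraliser}, this shows that the Ringel dual $\End_R(T)^{op}$ of $R$ is Morita equivalent to $\End_R(DW)^{op}\cong\End_{R^{op}}(W)\cong L$, which is the assertion. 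The ``vice versa'' statement then follows because Ringel duality is an involution up to Morita equivalence and interchanges the $\Delta$- and $\nabla$-filtered categories: $\mathcal{F}(\Delta_L)\cong\mathcal{F}(\nabla_{R(L)})\cong\mathcal{F}(\nabla_R)$, using that $R(L)$ is Morita equivalent to $R(R(R))$, hence to $R$.

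I expect the main obstacle to be precisely the identification $\add(T)=\add(DW)$: one must verify that the Burt--Butler equivalence is exact in the strong sense needed for ``Ext-injective in $\Ind(B,R)$'' in Theorem~\ref{injectivetiltings} to coincide with ``relatively injective in $\mathcal{F}(\Delta_R)$'' in Ringel's sense, and one must then keep careful track of the several occurrences of $(-)^{op}$ and of the $\mathbbm{k}$-duality $D$ when computing $\End_R(DW)$.
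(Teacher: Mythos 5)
Your proposal is correct and follows essentially the same route as the paper: both identify the Ext-injective objects of $\mathcal{F}(\Delta_R)\simeq\mathfrak{B}-\modu$ with $\add(T)$ (Ringel) and with $\add(DW)$ (Theorem \ref{injectivetiltings}), then apply the double centraliser property $\End_{R^{op}}(W)\cong L$ of Theorem \ref{doublecentraliser} to conclude that $L$ is Morita equivalent to $\End_R(T)^{op}$, and obtain the equivalence $\mathcal{F}(\nabla_L)\simeq\mathcal{F}(\Delta_R)$ from Theorem \ref{inducedboxmodules}. Your extra care with $\Ind(B,R)=\mathcal{F}(\Delta_R)$, the duality $D$, and the various $(-)^{op}$'s only makes explicit what the paper leaves implicit.
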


\begin{proof}
Since $\mathfrak{B}-\modu$ is equivalent to $\mathcal{F}(\Delta)$, it is of course fully additive. Hence (by Theorem \ref{injectivetiltings}) $DW$ contains exactly the Ext-injective objects of $\mathfrak{B}-\modu\cong \mathcal{F}(\Delta)$ as direct summands. It is well known (see e.g. \cite{Rin91}) that the same is true for $T$. Hence $\add DW\cong \add T$ via the restriction of the equivalence $\mathfrak{B}-\modu\cong A-\modu$, where $T$ is the characteristic tilting module of $A$. Thus $\End_R(DW)^{op}\cong \End_{R^{op}}(W)\cong L$ (by Theorem \ref{doublecentraliser}) is Morita equivalent to the Ringel dual of $A$.\\
The last claim follows since by Theorem \ref{inducedboxmodules} both are equivalent to $\mathfrak{B}-\modu$ via inducing (respectively coinducing).
\end{proof}

\begin{cor}[{\cite[Theorem 2]{Rin91}}] \label{existence-of-ASS}
The category $\mathcal{F}(\Delta)$ has almost split sequences. Furthermore whenever $(B,W)$ is a directed box, then the functor $R\otimes_B -: B-\modu B\to \mathcal{F}(\Delta_R)$ sends almost split sequences with indecomposable end terms in $\mathfrak{B}-\modu$ to almost split sequences or to split sequences.
\end{cor}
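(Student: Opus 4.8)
The plan is to obtain this corollary by transporting the box-theoretic Theorem \ref{almostsplitbocses} along the equivalences established in the previous sections; there is essentially nothing new to prove. By the Main Theorem there is, for the given standard system $\Delta$, a directed box $\mathfrak{B}=(B,W)$ together with an exact equivalence $\mathcal{F}(\Delta)\simeq\mathfrak{B}-\modu$. A directed box is by definition a box with projective kernel, and $\mathfrak{B}-\modu$ is fully additive --- this was shown directly for the boxes we construct, and in any case it follows from the equivalence with $\mathcal{F}(\Delta)$, which is closed under direct summands inside an abelian category. Hence Theorem \ref{almostsplitbocses} applies and yields almost split sequences in $\mathfrak{B}-\modu$.

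For the first assertion it then remains to observe that an equivalence of exact categories carries almost split sequences to almost split sequences: it sends exact pairs to exact pairs and preserves the properties of a map not being a section, not being a retraction, and of non-retractions (respectively non-sections) factoring through the relevant map. Transporting the almost split sequences of $\mathfrak{B}-\modu$ back along $\mathcal{F}(\Delta)\simeq\mathfrak{B}-\modu$ proves that $\mathcal{F}(\Delta)$ has almost split sequences. For the second assertion I would first pin down the categories in play: for a directed box put $\Delta_R(\mathtt{i}):=R\otimes_B L(\mathtt{i})$, as in the proof of the `if' direction. Since $B$ is directed, every $B$-module has a filtration by the $L(\mathtt{i})$, so by exactness of $R\otimes_B-$ (Proposition \ref{exact}) every induced module is $\Delta_R$-filtered; conversely $\Ind(B,R)$ contains all $\Delta_R(\mathtt{i})$ and, by Theorem \ref{extensions}, is closed under extensions, whence $\mathcal{F}(\Delta_R)=\Ind(B,R)$, which by Theorem \ref{inducedboxmodules} is precisely the copy of $\mathfrak{B}-\modu$ sitting inside $R-\modu$. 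Under these identifications the functor $R\otimes_B-$ from $B-\modu$ to $\mathcal{F}(\Delta_R)$ is the functor appearing in the last clause of Theorem \ref{almostsplitbocses}, which asserts exactly that an almost split sequence in $\mathfrak{B}-\modu$ with indecomposable end terms is sent to an almost split sequence, or to a split sequence, in $R-\modu$; viewing the resulting sequence inside $\mathcal{F}(\Delta_R)$ gives the statement as phrased.

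Since the whole argument is bookkeeping, the one point that genuinely needs care --- and the step I would check most carefully --- is the matching of conventions: one must verify that the notion of exact pair on $\mathfrak{B}-\modu$ used above agrees with the notion of admissible short exact sequence that Burt and Butler use when speaking of almost split sequences, so that Theorem \ref{almostsplitbocses} transfers verbatim rather than only up to a reformulation. This compatibility has effectively been built in, however, since the exact structure on $\mathfrak{B}-\modu$ was set up precisely so as to make $R\otimes_B-$ exact and to identify $\mathfrak{B}-\modu$ with $\mathcal{F}(\Delta_R)$; so I expect this to be a remark rather than a real obstacle.
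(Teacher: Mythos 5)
Your proposal is correct and follows essentially the same route as the paper, whose proof is simply the observation that the statement follows immediately from Theorem \ref{almostsplitbocses}; your additional bookkeeping (projective kernel, full additivity, identification of $\mathcal{F}(\Delta_R)$ with $\Ind(B,R)$ and transport of almost split sequences along the exact equivalence) just makes explicit what the paper leaves implicit.
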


\begin{proof}
This follows immediately from Theorem \ref{almostsplitbocses}.
\end{proof}

Also other results can now be reproven from the corresponding results for boxes, e.g. the following:

\begin{cor}[{\cite[Theorem 5]{Rin91}}]
The characteristic tilting module is a tilting module.
\end{cor}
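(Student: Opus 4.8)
The plan is to reduce the statement to a directed box and then transfer every homological question about the characteristic tilting module to the directed algebra $B$ through the exact functor $R\otimes_B-$. By the preceding theorems a quasi-hereditary algebra $A$ is Morita equivalent, with the same quasi-hereditary structure, to $R=R_\mathfrak{B}$ for a directed box $\mathfrak{B}=(B,W)$; since both the characteristic tilting module and the property of being a tilting module are Morita invariant, it suffices to treat $R$. By Theorem \ref{injectivetiltings} the Ext-injective objects of $\Ind(B,R)\simeq\mathcal{F}(\Delta_R)$ are exactly those in $\add(DW)$, and by the argument recalled in the proof of the Ringel-duality theorem above these are precisely the summands of the characteristic tilting module $T_R$. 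Since tilting-ness depends only on the additive closure, it is therefore enough to verify that the $R$-module $DW$ is a (Miyashita/generalised) tilting module: that $\operatorname{pd}_R DW<\infty$, that $\Ext^i_R(DW,DW)=0$ for all $i>0$, and that there is a finite $\add(DW)$-coresolution $0\to R\to T^0\to\cdots\to T^r\to 0$.

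The crucial point is the identification $DW\cong R\otimes_B DB$ of left $R$-modules, exhibiting $DW$ as induced from the injective cogenerator $DB$ of $B-\modu$. Indeed, Proposition \ref{exact} gives a natural isomorphism $R\otimes_B-\cong\Hom_B(W,-)$ of functors $B-\modu\to R-\modu$, and the hom-tensor/duality adjunction yields $\Hom_B(W,DB)=\Hom_B(W,\Hom_{\mathbbm{k}}(B,\mathbbm{k}))\cong\Hom_{\mathbbm{k}}(B\otimes_B W,\mathbbm{k})=DW$, compatibly with the left $R$-action coming from the right $R$-module structure of $W$.

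Granting this, all three tilting conditions follow from homological properties of $B$, which has finite global dimension because its quiver (part of the directed bigraph of $\mathfrak{B}$) is acyclic. For the projective dimension: $R\otimes_B-$ is exact (Proposition \ref{exact}) and carries the projective $B$-modules $Be_{\mathtt i}$ to the projective $R$-modules $Re_{\mathtt i}$, so applying it to a finite projective resolution of $DB$ over $B$ produces a finite projective resolution of $DW$ over $R$. For the self-extensions: the comparison theorem (Theorem \ref{extensions}) provides an epimorphism $\Ext^1_B(DB,DB)\twoheadrightarrow\Ext^1_R(DW,DW)$ and isomorphisms $\Ext^i_B(DB,DB)\cong\Ext^i_R(DW,DW)$ for $i\geq 2$; but $\Ext^i_B(DB,DB)\cong\Ext^i_{B^{op}}(B,B)=0$ for every $i\geq 1$ because $B$ is projective as a right module over itself, so $\Ext^i_R(DW,DW)=0$ for all $i>0$. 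For the coresolution: choosing a finite injective coresolution $0\to B\to I^0\to\cdots\to I^r\to 0$ of the left module ${}_BB$ and applying the exact functor $R\otimes_B-$ yields an exact sequence $0\to R\to R\otimes_B I^0\to\cdots\to R\otimes_B I^r\to 0$ in $R-\modu$ with each $R\otimes_B I^j\in\add(R\otimes_B DB)=\add(DW)$, which is the required coresolution of ${}_RR$. Hence $DW$, and therefore $T_R$, is a tilting module.

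I expect the only genuinely delicate part to be the bookkeeping in the reduction step: checking that the box-theoretic object $DW$ really is, up to additive closure, the characteristic tilting module of $R$, and that the isomorphism $DW\cong R\otimes_B DB$ respects the left $R$-module structure rather than merely the $B$-module structure. Once these identifications are secured, the three homological verifications are routine consequences of exactness of $R\otimes_B-$, the Ext-comparison of Theorem \ref{extensions}, and finiteness of the global dimension of the directed algebra $B$. An alternative route to the Ext-vanishing is available: degree one is immediate from Ext-injectivity of $DW$ in $\mathcal{F}(\Delta_R)$ (Theorem \ref{injectivetiltings}), and, since $\mathcal{F}(\Delta_R)$ is a resolving subcategory of $R-\modu$, one can pass to higher degrees there.
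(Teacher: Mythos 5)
Your argument is correct, but it takes a genuinely different route from the paper. The paper proves this corollary in one line: it invokes \cite[Theorem 5.1]{BB91}, which says that $W$ is a cotilting module, and concludes that $DW$ -- already identified with the characteristic tilting module up to additive closure in the proof of the Ringel-duality theorem -- is a tilting module. You instead re-derive the relevant part of that Burt--Butler result from ingredients the paper has already recalled: you identify $DW\cong R\otimes_B DB$ via Proposition \ref{exact} and the hom-tensor adjunction (and your check that this respects the left $R$-structure is exactly the point one must verify), and then obtain the three tilting axioms for $DW$ from exactness of $R\otimes_B-$ applied to a finite projective resolution of $DB$ and a finite injective coresolution of ${}_BB$ (both finite since the directed algebra $B$ has finite global dimension), together with the Ext-comparison of Theorem \ref{extensions} applied to $X=Y=DB$, where $\Ext^{i}_B(DB,DB)\cong\Ext^i_{B^{op}}(B,B)=0$ for $i\geq 1$. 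Both proofs share the same reduction step, namely $\add(DW)=\add(T_R)$ via Theorem \ref{injectivetiltings} and Ringel's identification of the Ext-injectives in $\mathcal F(\Delta)$, and the Morita-invariance bookkeeping; what your version buys is a self-contained verification that does not appeal to \cite[Theorem 5.1]{BB91} as a black box, at the cost of being considerably longer than the paper's citation-based proof.
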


\begin{proof}
The corresponding result is that $W$ is a cotilting module by \cite[Theorem 5.1]{BB91}. Thus, $DW$ is a tilting module.
\end{proof}

With repsect to extensions a directed box of a quasi-hereditary algebra contains more information than an arbitrary exact Borel subalgebra:

\begin{cor}
Let $\mathfrak{B}=(B,W)$ be a directed box, $R_\mathfrak{B}$ be its right algebra. Then the class of induced modules from $B$ to $R_{\mathfrak{B}}$ is closed under extensions. Given $B$-modules $X,Y$ there is a map
$$\Ext^n_B(X,Y)\to \Ext^n_R(R\otimes_B X, R\otimes_B Y)$$
which is an epimorphism for $n=1$ and an isomorphism for $n\geq 2$. A similar claim holds for $L_\mathfrak{B}$.
\end{cor}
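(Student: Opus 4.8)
The plan is to recognise this corollary as an immediate specialisation of the Burt--Butler comparison theorem, so that essentially no new work is required. First I would observe that every directed box is, by the definition opening Section \ref{section10}, a box \emph{with projective kernel}: directedness is only defined for such boxes. Consequently the entire structural package of Section \ref{section9} is available for $\mathfrak B$, in particular Theorem \ref{extensions}.

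Next I would simply invoke Theorem \ref{extensions} for $\mathfrak B$. Its first assertion says that $\Ind(B,R)$ is closed under extensions and that every such extension is induced from an extension in $B-\modu$; since $\Ind(B,R)$ is by definition the full subcategory of $R-\modu$ on modules of the form $R\otimes_B X$, this is exactly the first sentence of the corollary. Its second assertion provides, for all $B$-modules $X,Y$, the maps $\Ext^n_B(X,Y)\to\Ext^n_R(R\otimes_B X,R\otimes_B Y)$ and states that they are epimorphisms for $n=1$ and isomorphisms for $n\ge 2$, which is precisely the remaining content. For the `similar claim' for $L_{\mathfrak B}$ I would apply the same theorem to the coinduction functor $\Hom_B(L,-)$: it gives closure of $\CoInd(B,L)$ under extensions and the comparison maps $\Ext^n_B(X,Y)\to\Ext^n_L(\Hom_B(L,X),\Hom_B(L,Y))$ with the identical behaviour in degrees $1$ and $\ge 2$; via the natural isomorphism $\Hom_B(L,-)\cong W\otimes_B-$ of Proposition \ref{exact} this may equivalently be phrased in terms of $W\otimes_B-$.

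Since all the homological work has already been carried out by Burt and Butler, there is no real obstacle here; the only point meriting a word of care is that the statement concerns the functors $R\otimes_B-$ and $\Hom_B(L,-)$ on all of $B-\modu$, not merely on the simples $L(\mathtt i)$, but Theorem \ref{extensions} is itself stated at that level of generality, so nothing extra is needed. If one wishes to make good on the contrast with an arbitrary exact Borel subalgebra alluded to in the statement, I would add the short remark that the axioms (B1)--(B3) only force $A\otimes_B L_B(\mathtt i)\cong\Delta_A(\mathtt i)$ together with exactness of $A\otimes_B-$, and thus a priori yield no control over higher extension groups, whereas the box datum does via the maps above.
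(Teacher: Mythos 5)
Your proposal is correct and coincides with the paper's own argument: the corollary is proved there simply by noting that it "follows immediately from Theorem \ref{extensions}", since a directed box has projective kernel by definition. Your extra remarks on coinduction and on the contrast with arbitrary exact Borel subalgebras are accurate but not needed for the proof itself.
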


\begin{proof}
This follows immediately from Theorem \ref{extensions}.
\end{proof}

An example where this is not true for an exact Borel subalgebra not coming from a box is discussed in Appendix \ref{A4}.

The Ringel dual of an algebra can also be constructed as the opposite algebra of the right algebra of the 'opposite' box:

\begin{prop}
Let $\mathfrak{B}=(B,W)$ be a box. Then there is a box $\mathfrak{B}^{op}=(B^{op},W^{op})$ such that $L_\mathfrak{B}\cong R_{\mathfrak{B}^{op}}^{op}$. In particular $R(R(A)^{op})^{op}\cong A$.
\end{prop}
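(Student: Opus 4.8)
The plan is to build $\mathfrak{B}^{op}$ as the \emph{opposite coring} of $\mathfrak{B}=(B,W,\mu,\varepsilon)$: take $B^{op}$, let $W^{op}$ be $W$ with its left and right $B$-actions interchanged (so a $B^{op}$-bimodule), set $\varepsilon^{op}:=\varepsilon$ (the same $\mathbbm{k}$-linear map), and let $\mu^{op}$ be $\mu$ followed by the canonical swap $\sigma\colon W\otimes_B W\xrightarrow{\ \sim\ }W^{op}\otimes_{B^{op}}W^{op}$, $w_1\otimes w_2\mapsto w_2\otimes w_1$. The first step is to check that $(B^{op},W^{op},\mu^{op},\varepsilon^{op})$ is a box. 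That $\sigma$ is a well-defined $B^{op}$-bimodule isomorphism is the usual identity $M\otimes_B N\cong N\otimes_{B^{op}}M$ for a right module $M$ and a left module $N$ (one verifies it respects the tensor relations once the sides are swapped); coassociativity of $\mu^{op}$ follows from that of $\mu$ by a one-line Sweedler computation, both iterated comultiplications of $\mathfrak{B}^{op}$ sending $w$ to $\sum w_{(3)}\otimes w_{(2)}\otimes w_{(1)}$ when $\sum w_{(1)}\otimes w_{(2)}\otimes w_{(3)}$ is the iterated comultiplication of $\mathfrak{B}$; and the counit axiom for $\varepsilon^{op}$ is immediate from that for $\varepsilon$. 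I would record in passing that $\ker\varepsilon^{op}$ is $\ker\varepsilon$ with sides swapped, hence a projective $B^{op}$-bimodule whenever $\ker\varepsilon$ is a projective $B$-bimodule, and that the bigraph of $\mathfrak{B}^{op}$ is the bigraph of $\mathfrak{B}$ with all (solid and dashed) arrows reversed; so $\mathfrak{B}^{op}$ has projective kernel, resp.\ is directed, precisely when $\mathfrak{B}$ does, resp.\ is.

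The central step is the identification $R_{\mathfrak{B}^{op}}\cong L_{\mathfrak{B}}^{op}$. On underlying spaces this is immediate: $R_{\mathfrak{B}^{op}}\cong\Hom_{B^{op}}({}_{B^{op}}W^{op},{}_{B^{op}}B^{op})$, and the left $B^{op}$-module ${}_{B^{op}}W^{op}$ is exactly the right $B$-module $W_B$ while ${}_{B^{op}}B^{op}=B_B$, so $R_{\mathfrak{B}^{op}}$ and $L_{\mathfrak{B}}$ both have underlying space $\Hom_B(W_B,B_B)$, with the same distinguished element $\varepsilon$. To compare products I would feed $\mu^{op}=\sigma\mu$ into the explicit multiplication formula defining the right algebra. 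Writing $\mu(w)=\sum w_1\otimes w_2$, the definitions in Section~\ref{section9} yield
$$(e\cdot_{L_{\mathfrak{B}}}f)(w)=\sum e\bigl(f(w_1)\,w_2\bigr),\qquad (e\cdot_{R_{\mathfrak{B}^{op}}}f)(w)=\sum f\bigl(e(w_1)\,w_2\bigr),$$
where in the second formula the single transposition in $\sigma$ is precisely what turns the ``apply $e$ to the right tensor factor'' of the $R$-definition into an action on $w_1$, the identification $W^{op}\otimes_{B^{op}}B^{op}\cong W^{op}$ (right $B^{op}$-action $=$ left $B$-action) producing the term $e(w_1)\,w_2$. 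Hence $e\cdot_{R_{\mathfrak{B}^{op}}}f=f\cdot_{L_{\mathfrak{B}}}e$, i.e.\ $R_{\mathfrak{B}^{op}}\cong L_{\mathfrak{B}}^{op}$, and therefore $L_{\mathfrak{B}}\cong R_{\mathfrak{B}^{op}}^{op}$, which is the first assertion.

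For the consequence $R(R(A)^{op})^{op}\cong A$ I would argue purely formally, invoking the theorem (stated just before) that for a directed box $\mathfrak{B}'$ with projective kernel, $L_{\mathfrak{B}'}$ is Morita equivalent (written $\sim$) to the Ringel dual of $R_{\mathfrak{B}'}$; here I use that $R(A)^{op}$ is again quasi-hereditary, that Ringel duality and passage to the opposite algebra respect $\sim$, and that by Theorem~\ref{maintheorem} a basic quasi-hereditary $A$ is $\sim$-equivalent to $R_{\mathfrak{B}}$ for a directed box $\mathfrak{B}$. Applying the cited theorem to $\mathfrak{B}$ gives $R(R_{\mathfrak{B}})\sim L_{\mathfrak{B}}\cong R_{\mathfrak{B}^{op}}^{op}$, hence $R(R_{\mathfrak{B}})^{op}\sim R_{\mathfrak{B}^{op}}$; applying it to the directed box $\mathfrak{B}^{op}$ and using $L_{\mathfrak{B}^{op}}\cong R_{(\mathfrak{B}^{op})^{op}}^{op}=R_{\mathfrak{B}}^{op}$ gives $R\bigl(R(R_{\mathfrak{B}})^{op}\bigr)\sim R(R_{\mathfrak{B}^{op}})\sim L_{\mathfrak{B}^{op}}\cong R_{\mathfrak{B}}^{op}$. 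Thus $R(R(R_{\mathfrak{B}})^{op})^{op}\sim R_{\mathfrak{B}}\sim A$, and transporting through $A\sim R_{\mathfrak{B}}$ gives $R(R(A)^{op})^{op}\sim A$. Since $A$ is basic and iterated Ringel duals and opposites of basic quasi-hereditary algebras are again basic, this Morita equivalence is an isomorphism.

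The step I expect to be the main obstacle is the sign-free but convention-heavy bookkeeping in the product comparison: the left/right $B^{op}$-actions on $W^{op}$, the direction chosen for the swap $\sigma$, and the two opposed ``which map is applied first'' conventions built into the very definitions of $L$ and $R$ must all be tracked simultaneously, since a single slip would deliver the false equality $R_{\mathfrak{B}^{op}}\cong L_{\mathfrak{B}}$ instead of its opposite. Everything else --- coassociativity and the counit axiom for $\mathfrak{B}^{op}$, preservation of projective kernel and of directedness, and the Morita-theoretic chase of the last paragraph --- is routine.
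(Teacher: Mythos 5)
Your proposal is correct and takes essentially the same route as the paper: the same construction of $\mathfrak{B}^{op}$ (sides of $W$ interchanged, $\mu^{op}$ equal to $\mu$ followed by the swap of tensor factors), and the same Sweedler-notation comparison of the two multiplication formulas, giving $e\cdot_{R_{\mathfrak{B}^{op}}}f=f\cdot_{L_{\mathfrak{B}}}e$ and hence $L_{\mathfrak{B}}\cong R_{\mathfrak{B}^{op}}^{op}$. You are simply more explicit than the paper in checking the box axioms for $\mathfrak{B}^{op}$ (coassociativity, counit, preservation of projective kernel and directedness) and in the Morita-plus-basicness bookkeeping that upgrades $R(R(A)^{op})^{op}\sim A$ to an isomorphism, which the paper states only in one terse sentence.
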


\begin{proof}
Let $\mathfrak{B}=(B,W)$ be a box. Then $B^{op}$ is also a category and each $B$-$B$-bimodule can also be regarded as a $B^{op}$-$B^{op}$-bimodule via the equivalence of right $B$- and left $B^{op}$-modules (and vice versa). Denote $W$, regarded as a $B^{op}$-$B^{op}$-bimodule, as $W^{op}$. Write $\mu$ in Sweedler notation, i.e. $\mu(v)=\sum v_{(1)}\otimes v_{(2)}$,  then  $\mu^{op}$ is given by $\mu^{op}(v)=\sum v_{(2)}\otimes v_{(1)}$. Hence if we compare the multiplication $e\cdot f$ in the left algebra $\Hom_B(W_B,B_B)$:
$$W\stackrel{\mu}{\to} W\otimes_B W\stackrel{f\otimes 1}{\to} B\otimes_B W\stackrel{\sim}{\to} W\stackrel{e}{\to} B$$
with the multiplication $e^{op}*f^{op}=f^{op}\cdot e^{op}$ in the opposite algebra of the right algebra $\Hom_B({}_BW,{}_BB)^{op}$:
$$W^{op}\stackrel{\mu^{op}}{\to} W^{op}\otimes_{B^{op}} W^{op}\stackrel{1\otimes f^{op}}{\to} W^{op}\otimes_{B^{op}} B^{op}\stackrel{\sim}{\to} W^{op}\stackrel{e^{op}}{\to} B^{op}$$
we get the same resulting map $v\mapsto e(f(v_{(1)})v_{(2)})=e^{op}(v_{(2)}*f^{op}(v_{(1)}))$.\\
Since taking the opposite algebra twice gives back the original algebra, there is an isomorphism $R(R(A)^{op})^{op}\cong A$.
\end{proof}

\begin{appendix}

\section{Examples}

\subsection{The regular block of $\mathfrak{sl}_2$}\label{A1}

Our series of examples starts with the basic algebra $A$ corresponding to a regular block of $\mathfrak{sl}_2$, that is the algebra given by the following quiver:
$$\begin{xy}\xymatrix{{\mathtt{2}}\ar@<2pt>[r]^\alpha &1\ar@<2pt>[l]^\beta}\end{xy}$$
with the relation $\beta\alpha$, where we compose arrows like linear maps. It has the following projective modules (written in terms of their socle series) with standard modules having composition factors corresponding to the \framebox{'boxed'} vertices:
$$\begin{xy}\xymatrix{ {\framebox{$\mathtt{1}$}}\ar[d]^\beta \\ {\mathtt{2} }\ar[d]^\alpha&&{\framebox{$\mathtt{2}$}}\ar[d]^\alpha \\{\mathtt{1}}&&{\framebox{$\mathtt{1}$}}}\end{xy}$$
The $A_\infty$-Yoneda category $\mathcal{A}$ looks as follows:
$$\begin{xy}\xymatrix{{\Delta(\mathtt{1})}\ar@<2pt>[r]\ar@<-2pt>@{-->}[r]&{\Delta(\mathtt{2})}}\end{xy}$$
where the solid arrow (of degree $1$) corresponds to the extension of $\Delta(\mathtt{2})$ by $\Delta(\mathtt{1})$ given by $P(\mathtt{1})$ and the dashed arrow corresponds to the inclusion of $\Delta(\mathtt{1})$ in $\Delta(\mathtt{2})$. The multiplications all equal zero except for the ones with the identity morphisms that we have omitted in our picture. This is because $b_1$ is always zero since the Yoneda category is a minimal model, i.e. the homology of some algebra, and $b_{\geq 2}$ are zero as there are no paths of length $\geq 2$ in the graded quiver. Now we have to compute $\mathbb{D}s\mathcal{A}$ according to our procedure. This is given by the following quiver (here we have included the duals of the identity morphisms).
$$\begin{xy}\xymatrix{\Delta(\mathtt{1})\ar@{-->}@(l,dl)_{\omega_1}\ar@<2pt>[r]^{a}\ar@{-->}@<-2pt>[r]_{\varphi}&\Delta(\mathtt{2})\ar@{-->}@(r,dr)^{\omega_\mathtt{2}}}\end{xy}$$
So a Borel subalgebra is given by the path algebra of the Dynkin quiver $\mathbb{A}_\mathtt{2}$. To compute the right algebra of the box (which according to our  results is Morita equivalent to the regular block of $\mathfrak{sl}_2$) we have to compute the opposite of the endomorphism algebra of $B$, as a representation of the box. \\
A representation of that box is by definition a representation of $B$ and a morphism of box representations is an assignment of linear maps making the following square commutative (were the map on the diagonal doesn't satisfy any conditions since $\partial a=0$):
$$\begin{xy}\xymatrix{V_\mathtt{1}\ar@{-->}[r]^{f_{\omega_\mathtt{1}}}\ar[d]_{V_a}\ar@{-->}[rd]|{f_{\varphi}} &W_\mathtt{1}\ar[d]^{W_a}\\V_\mathtt{2}\ar@{-->}[r]_{f_{\omega_\mathtt{2}}} &W_\mathtt{2} }\end{xy}$$
So for the box representation $B$ we get the following diagram:
$$\begin{xy}\xymatrix{\mathbbm{k}\ar@{-->}[r]^{x}\ar[d]_{\begin{pmatrix}1\\0\end{pmatrix}}\ar@{-->}[rd]|{\begin{pmatrix}\lambda\\\mu\end{pmatrix}} &\mathbbm{k}\ar[d]^{\begin{pmatrix}1\\0\end{pmatrix}}\\\mathbbm{k}^2\ar@{-->}[r]_{\begin{pmatrix}x&y\\0&z\end{pmatrix}} &\mathbbm{k}^2 }\end{xy}$$
The resulting opposite endomorphism algebra is $5$-dimensional, and isomorphic to $A$. To check this fact by computing the composition of such maps, we write two such morphisms next to each other (one with primes and one without). Since $\partial v=0$ we just have to compute the composition corresponding to $\omega_\mathtt{2} v+v\omega_\mathtt{1}$ to get the resulting map on the diagonal. Since the $\omega_\mathtt{i}$ are grouplike, the corresponding maps are given by the composition of the maps corresponding to the $\omega_\mathtt{i}$:
$$\begin{xy}\xymatrix@C=2cm{\mathbbm{k}\ar@{-->}[r]^{x}\ar[dd]_{\begin{pmatrix}1\\0\end{pmatrix}}\ar@{-->}[rdd]|{\begin{pmatrix}\lambda\\\mu\end{pmatrix}} &\mathbbm{k}\ar@{-->}[rdd]|{\begin{pmatrix}\lambda'\\\mu'\end{pmatrix}}\ar[dd]|{\begin{pmatrix}1\\0\end{pmatrix}}\ar@{-->}[r]^{x'}&\mathbbm{k}\ar[dd]^{\begin{pmatrix}1\\0\end{pmatrix}}&&\mathbbm{k}\ar@{-->}[r]^{x'x}\ar@{-->}[rdd]|{\begin{pmatrix}x'\lambda+y'\mu+\lambda'x\\z'\mu+\mu' x\end{pmatrix}}&\mathbbm{k}\\&&&=\\\mathbbm{k}^2\ar@{-->}[r]_{\begin{pmatrix}x&y\\0&z\end{pmatrix}} &\mathbbm{k}^2\ar@{-->}_{\begin{pmatrix}x'&y'\\0&z'\end{pmatrix}}[r]&\mathbbm{k}^2&&\mathbbm{k}^2\ar@{-->}[r]_{\begin{pmatrix}x'x&x'y+y'z\\0&z'z\end{pmatrix}}&\mathbbm{k}^2}\end{xy}$$
We have omitted the solid arrows in the rightmost diagram to make it more readable. This algebra can be represented by the following matrix algebra which is isomorphic to $A^{op}$:
$$\begin{pmatrix}x&y&\lambda\\0&z&\mu\\0&0&x\end{pmatrix}$$
That the right algebra of the box is isomorphic (and not only Morita equivalent) to the original basic algebra we started with seems to be a low rank phenomenon. Even in the case of blocks of category $\mathcal{O}$ (which were proven to have exact Borel subalgebras in \cite{Koe95}) we will usually get 'bigger' algebras. This happens for example already for the singular block of $\mathfrak{sl}_3$ or for the block of the parabolic version of category $\mathcal{O}$ corresponding to the Levi subalgebra $\mathfrak{gl}_1(\mathbb{C})\oplus \mathfrak{gl}_2(\mathbb{C})\subseteq \mathfrak{gl}_3(\mathbb{C})$. In the latter case the $A_\infty$-structure on the Ext-algebra was computed by Klamt and Stroppel in \cite{KS12}.

\subsection{An example from the reduction algorithm}\label{A2}

The next example gives counterexamples to some tempting conjectures, i.e. that the directed box giving a quasi-hereditary algebra is unique or that the connectedness of the biquiver given by the box implies the connectedness of the algebra. We modify the box obtained in the last example slightly, so that $a$ becomes a so-called irregular or superfluous arrow:
$$\begin{xy}\xymatrix{\mathtt{1}\ar@{-->}@(l,dl)_{\omega_\mathtt{1}}\ar@<2pt>[r]^a\ar@{-->}@<-2pt>[r]_v&\mathtt{2}\ar@{-->}@(r,dr)^{\omega_\mathtt{2}}}\end{xy}$$
with $\partial a =v$ (and $\partial v=0$). A box representation is still a representation of $B$ as before but morphisms involve the diagonal, i.e. by (d2) we must have $f_{\omega_\mathtt{2}}V_a-W_af_{\omega_\mathtt{1}}+f_\varphi=0$.
$$\begin{xy}\xymatrix{V_\mathtt{1}\ar@{-->}[r]^{f_{\omega_\mathtt{1}}}\ar[d]_{V_a}\ar@{-->}[rd]|{f_{\varphi}} &W_\mathtt{1}\ar[d]^{W_a}\\V_\mathtt{2}\ar@{-->}[r]_{f_{\omega_\mathtt{2}}} &W_\mathtt{2} }\end{xy}$$
For the possible endomorphisms of $B$ we thus get:
$$\begin{xy}\xymatrix@!=1.5cm{\mathbbm{k}\ar@{-->}[r]^\lambda\ar@{-->}[rd]|{\begin{pmatrix}\lambda-r\\-t\end{pmatrix}}&\mathbbm{k}\\\mathbbm{k}^2\ar@{-->}[r]_{\begin{pmatrix}r&s\\t&u\end{pmatrix}}&\mathbbm{k}^2}\end{xy}$$
It is easy to see (the diagonal is determined by the matrix rings) that the composition of two morphisms just corresponds to the multiplication in the matrix rings. Thus the right algebra (which is again $5$-dimensional) of the box is:
$$\begin{pmatrix}\lambda&0&0\\0&r&s\\0&t&u\end{pmatrix}$$
If we would do our process with the basic algebra corresponding to that algebra, i.e. $\mathbbm{k}\times \mathbbm{k}$ we would end up with the following box:
$$\begin{xy}\xymatrix{\mathtt{1}\ar@{-->}@(l,dl)_{\omega_\mathtt{1}}&\mathtt{2}\ar@{-->}@(r,dr)^{\omega_\mathtt{2}}}\end{xy}$$
That the two boxes have equivalent representation categories was known before under the name of 'regularisation'.

\subsection{An algebra not having an exact Borel subalgebra}\label{A3}

As remarked above, not every algebra has an exact Borel subalgebra. The following example mentioned in \cite{Koe95} is such an algebra. But there is an error in the argument in \cite{Koe95}. In contrast to our main theorem it is claimed that there is no Morita equivalent version having an exact Borel subalgebra. Here we will compute a Morita equivalent algebra having an exact Borel subalgebra. This also contradicts the result in \cite{Koe99} that having an exact Borel subalgebra is Morita invariant. The algebra is given by quiver and relations as follows:
 $$\begin{xy}\xymatrix{&\mathtt{1}\ar[rd]\ar[ld]\ar@{.}[dd]\\\mathtt{2}\ar[rd]&&\mathtt{4}\ar[ld]\\&\mathtt{3}}\end{xy}$$
where the dotted line in the middle indicates the obvious commutativity relation. We get the following projective modules. Again the composition factors of the corresponding standard modules have been boxed.
$$\begin{xy}\xymatrix{
&\framebox{$\mathtt{1}$}\ar[rd]\ar[ld]\\\mathtt{2}\ar[rd]&&\mathtt{4}\ar[ld]&&\framebox{$\mathtt{2}$}\ar[rd]&&&&&&\framebox{$\mathtt{4}$}\ar[ld]\\&\mathtt{3}&&&&\mathtt{3}&&\framebox{$\mathtt{3}$}&&\framebox{$\mathtt{3}$}
}\end{xy}$$
The following diagram gives the $A_\infty$-Yoneda category:
$$\begin{xy}\xymatrix{&\Delta(\mathtt{1})\ar[ld]_\alpha\ar@{.>}[dd]_(.33)r\\\Delta(\mathtt{2})\ar[rr]^(.66)\beta\ar[rd]_{\gamma}&&\Delta(\mathtt{4})\\&\Delta(\mathtt{3})\ar@{-->}[ru]_\varphi}\end{xy}$$
Since most of the standard modules are simple, the arrows of degree $1$ on the left and the dotted arrow of degree $2$ correspond to arrows in the original quiver. The solid arrow $\begin{xy}\xymatrix{\Delta(\mathtt{2})\ar[r] &\Delta(\mathtt{4})}\end{xy}$ corresponds to the extension given by $\operatorname{rad} P(\mathtt{1})$ and the dashed arrow $\begin{xy}\xymatrix{\Delta(\mathtt{3})\ar@{-->}[r] &\Delta(\mathtt{4})}\end{xy}$ corresponds to the inclusion.\\
All the $b_i$ are zero for $i\geq 4$, since there are no paths of length $\geq 4$. There is only one path of length $3$, but there is no arrow (of degree $1$) from $\Delta(\mathtt{1})$ to $\Delta(\mathtt{4})$, hence $b_3$ is also zero. Since $b_2$ is induced by the Yoneda product, it can be calculated quite easily by choosing representatives of the arrows. We omit the calculations here.\\
This results in the differentials given by $d(r)=\gamma\alpha$, $\partial \beta=\varphi\gamma$ and zero in the other cases. Thus a Borel subalgebra $B$ is given by the solid arrows in this quiver and the relation $d(r)=0$.\\
Now $B$ is given by the following representation where the arrows are numbered from top to bottom:
$$\begin{xy}\xymatrix{\mathbbm{k}\ar[d]_{\begin{pmatrix}1\\0\end{pmatrix}}\\\mathbbm{k}^2\ar[d]^{\begin{pmatrix}0&1\\0&0\end{pmatrix}}\ar@/_0.5cm/[dd]_{\begin{pmatrix}1&0\\0&1\\0&0\end{pmatrix}}\\\mathbbm{k}^2\\\mathbbm{k}^3
}\end{xy}$$
For the morphisms we will again omit the solid arrows to save space. The commutativities give the following endomorphisms:
$$\begin{xy}\xymatrix@!=1.5cm{
\mathbbm{k}\ar@{-->}[r]^a&\mathbbm{k}\\
\mathbbm{k}^2\ar@{-->}[r]^{\begin{pmatrix}a&c\\0&d\end{pmatrix}}&\mathbbm{k}^2\\
\mathbbm{k}^2\ar@{-->}[r]^{\begin{pmatrix}d&b\\0&e\end{pmatrix}}\ar@{-->}[rd]|{\begin{pmatrix}f-c&\ell\\h-d&m\\j&n\end{pmatrix}}&\mathbbm{k}^2\\
\mathbbm{k}^3\ar@{-->}[r]_{\begin{pmatrix}a&f&g\\0&h&i\\0&j&k\end{pmatrix}}&\mathbbm{k}^3
}\end{xy}$$
The composition is as follows:
$$\begin{xy}\xymatrix@!=1.5cm{
\mathbbm{k}\ar@{-->}[r]^a&\mathbbm{k}\ar@{-->}[r]^{a'}&\mathbbm{k}&&\mathbbm{k}\ar@{-->}[rr]^{a'a}&&\mathbbm{k}\\
\mathbbm{k}^2\ar@{-->}[r]^{\begin{pmatrix}a&c\\0&d\end{pmatrix}}&\mathbbm{k}^2\ar@{-->}[r]^{\begin{pmatrix}a'&c'\\0&d'\end{pmatrix}}&\mathbbm{k}^2&&\mathbbm{k}^2\ar@{-->}[rr]^{\begin{pmatrix}a'a&a'c+c'd\\0&d'd\end{pmatrix}}&&\mathbbm{k}^2\\
\mathbbm{k}^2\ar@{-->}[r]^{\begin{pmatrix}d&b\\0&e\end{pmatrix}}\ar@{-->}[rd]|{\begin{pmatrix}f-c&\ell\\h-d&m\\j&n\end{pmatrix}}&\mathbbm{k}^2\ar@{-->}[r]^{\begin{pmatrix}d'&b'\\0&e'\end{pmatrix}}\ar@{-->}[rd]|{\begin{pmatrix}f'-c'&\ell'\\h'-d'&m'\\j'&n'\end{pmatrix}}&\mathbbm{k}^2&=&\mathbbm{k}^2\ar@{-->}[rr]^{\begin{pmatrix}d'&b'\\0&e'\end{pmatrix}}\ar@{-->}[rrd]|{\tiny{\begin{pmatrix}-c'd+a'f-a'c+f'h+gj&f'b-c'b+\ell'e+a'\ell+f'm+g'n\\-d'd+h'h+i'j&h'b-d'b+m'e+h'm+i'n\\j'h+k'j&j'b+n'e+j'm+k'n\end{pmatrix}}}&&\mathbbm{k}^2\\
\mathbbm{k}^3\ar@{-->}[r]_{\begin{pmatrix}a&f&g\\0&h&i\\0&j&k\end{pmatrix}}&\mathbbm{k}^3\ar@{-->}[r]_{\begin{pmatrix}a'&f'&g'\\0&h'&i'\\0&j'&k'\end{pmatrix}}&\mathbbm{k}^3&&\mathbbm{k}^3\ar@{-->}[rr]_{\tiny{\begin{pmatrix}a'a &a'f+f'h+g'j &a'g+f'i+g'h\\0&h'h+i'j&h'i+i'k\\0&j'h+k'j&j'i+k'k \end{pmatrix}}}&&\mathbbm{k}^3
}\end{xy}$$
One checks quite straightforwardly that setting one of the parameters $a$, $d$, $e$, $h$ or $k$ to $1$ and all others to $0$ one gets an idempotent and that $a\ell e=\ell$, $hme=m$ and $kne=n$, where the latter corresponds to the parameter set to $1$ and all others to $0$. Explicit calculations show that this gives the following algebra which is Morita equivalent, but not isomorphic, to the algebra $A^{op}$:
$$\begin{pmatrix}a&c&f&g&\ell\\
0&d&0&0&-b\\0&0&h&i&m+b\\
0&0&j&k&n\\0&0&0&0&e\end{pmatrix}.$$
We can quite easily see the Borel subalgebra $B$. If $a\in B$ then $(a\cdot \varepsilon)(\varphi)=\varepsilon(\varphi\cdot a)=\varepsilon(\varphi)\cdot a=0$ by definition and since $\varepsilon$ is a $B$-$B$-bimodule homomorphism. This implies that $f=c$, $h=d$ and $j=\ell=m=n=0$. The remaining parameters form an $8$-dimensional algebra, which needs to be isomorphic to $B^{op}$ by the theory we developed. Thus, the opposite of the Borel subalgebra $B^{op}$ looks as follows:
$$\begin{pmatrix}a&c&c&g&0\\0&d&0&0&-b\\0&0&d&i&b\\0&0&0&k&0\\0&0&0&0&e\end{pmatrix}.$$
Alternatively one can compute by hand that $ae_1\varepsilon+de_2\varepsilon+ee_3\varepsilon+ke_4\varepsilon+c\alpha\varepsilon+b\gamma\varepsilon+i\beta\varepsilon+g\beta\alpha\varepsilon$ is a subalgebra isomorphic to $B^{op}$.

\subsection{An exact Borel subalgebra not associated with a directed box}\label{A4}

Note that the exact Borel subalgebras we give do not exhaust all exact Borel subalgebras. The exact Borel subalgebras given by our process have the special property that the induction functor is dense on the category of $\Delta$-filtered modules (see Theorem \ref{extensions}). An exact Borel subalgebra where this property fails was given in \cite{Koe99}. It is the algebra given by quiver:
$$\begin{xy}\xymatrix{
\mathtt{2}\ar@<2pt>[rr]^\alpha&&\mathtt{3}\ar@<2pt>[ll]^\beta\ar[dl]^\delta\\
&\mathtt{1}\ar[ul]^\gamma
}
\end{xy}$$
with relations $\delta\gamma=0=\alpha\beta$. According to \cite{Koe99} it has a Borel subalgebra given by the subquiver:
$$\begin{xy}\xymatrix{
\mathtt{2}\ar@<2pt>[rr]^\alpha&&\mathtt{3}\\
&\mathtt{1}\ar[ul]^\gamma
}
\end{xy}$$
We use our algorithm to produce an exact Borel subalgebra (of a Morita equivalent algebra) such that the induction functor is dense on $\mathcal{F}(\Delta)$. Again we start by writing down the projective modules (with boxed standard modules):
$$\begin{xy}\xymatrix{
&\framebox{$\mathtt{1}$}\ar[d]\\
&\mathtt{2}\ar[d]&&&&\framebox{$\mathtt{2}$}\ar[d]\\
&\mathtt{3}\ar[rd]\ar[ld]&&&&\mathtt{3}\ar[ld]\ar[rd]&&&&\framebox{$\mathtt{3}$}\ar[rd]\ar[ld]\\
\mathtt{2}&&\mathtt{1}&&\mathtt{2}&&\mathtt{1}&&\framebox{$\mathtt{2}$}&&\framebox{$\mathtt{1}$}
}
\end{xy}$$
Then the Ext-category is given by the following graded quiver (it is easy to see that the $\Delta$ have a projective resolution of length $2$, hence $\Ext^2$ vanishes):
$$\begin{xy}\xymatrix{
\Delta(\mathtt{2})\ar@<2pt>[rr]^\alpha\ar@{-->}@<-2pt>[rr]_\psi&&\Delta(\mathtt{3})\\
&\Delta(\mathtt{1})\ar@<2pt>[ur]^{\overline{\gamma}}\ar@{-->}@<-2pt>[ur]_{\varphi}\ar[ul]^{\gamma}
}
\end{xy}$$
The following pushout diagram shows that $b_2(\psi,\gamma)=\overline{\gamma}$ and the reason why we named $\overline{\gamma}$ like that:
$$\begin{xy}\xymatrix{
\Delta(\mathtt{2})\ar[r]\ar[d]&P(\mathtt{1})/\Delta(\mathtt{3})\ar[r]\ar[d]&\Delta(\mathtt{1})\ar@<2pt>@{-}[d]\ar@<-2pt>@{-}[d]\\
\Delta(\mathtt{3})\ar[r]&C\ar[r]&\Delta(\mathtt{1}),\\
}
\end{xy}$$
where $C$ is the module
$$\begin{xy}
\xymatrix{
\mathtt{1}\ar[rd]^\gamma&&\mathtt{3}\ar[rd]^\delta\ar[ld]^\beta\\
&\mathtt{2}&&\mathtt{1}
}
\end{xy}$$
Note that $C$ also is the module which is not reached by the induction functor of the exact Borel subalgebra given above.
It is also easy to see that $b_2(\alpha,\gamma)=0$ and all other $b_i$ are zero by similar reasoning as in the other examples.
Now $B$ is given by the following representation:
$$\begin{xy}\xymatrix@R=2cm{
\mathbbm{k}\ar[d]^{\begin{pmatrix}1\\0\end{pmatrix}}\ar@/_1cm/[dd]_{\begin{pmatrix}0\\0\\1\\0\end{pmatrix}}\\
\mathbbm{k}^2\ar[d]^{\begin{pmatrix}1&0\\0&1\\0&0\\0&0\end{pmatrix}}\\
\mathbbm{k}^4
}
\end{xy}$$
When writing the morphisms we again omit the solid arrows:
$$\begin{xy}\xymatrix@!=2cm{
\mathbbm{k}\ar@{-->}[rrr]^{a}\ar@{-->}[rrrdd]|(.2){\begin{pmatrix}r\\s\\t\\u\end{pmatrix}}&&&\mathbbm{k}\\
\mathbbm{k}^2\ar@{-->}[rrrd]|(.4){\begin{pmatrix}b&\ell\\f&m\\h-a&n\\j&o\end{pmatrix}}\ar@{-->}[rrr]^(.7){\begin{pmatrix}a&c\\0&d\end{pmatrix}}&&&\mathbbm{k}^2\\
\mathbbm{k}^4\ar@{-->}[rrr]_{\begin{pmatrix}a&c&b&e\\0&d&f&g\\0&0&h&i\\0&0&j&k\end{pmatrix}}&&&\mathbbm{k}^4
}
\end{xy}$$
We leave it to the reader to check that the following matrix algebra gives an algebra  isomorphic to $R^{op}$ and therefore Morita equivalent to $A^{op}$ and that the given subalgebra gives an exact Borel subalgebra. The methods to check this are the same as in the last example.
$$\begin{pmatrix}a&c&b&e&r&\ell\\0&d&f&g&s&m\\0&0&h&i&t&n+c\\0&0&j&k&u&o\\0&0&0&0&a&0\\0&0&0&0&0&d\end{pmatrix}\supset \begin{pmatrix}a&c&0&e&0&0\\0&d&0&g&0&0\\0&0&a&i&0&c\\0&0&0&k&0&0\\0&0&0&0&a&0\\0&0&0&0&0&d\end{pmatrix}.$$

\end{appendix}

\section*{Acknowledgement}

The authors would like to thank the anonymous referees for detailed comments that improved the readability of this article.

\bibliographystyle{abbrv}
\bibliography{publication}

\printindex

\end{document}